\renewcommand{\emph}[1]{{\bf{#1}}}
\newtheorem{theorem}{Theorem}[section]
\newtheorem{lemma}[theorem]{Lemma}
\theoremstyle{definition}
\newtheorem{definition}[theorem]{Definition}
\theoremstyle{remark}
\newtheorem{remark}[theorem]{Remark}
\newtheorem{example}[theorem]{Example}
\theoremstyle{plain}
\newcommand{\thistheoremname}{}
\newtheorem{genericthm}[theorem]{\thistheoremname}
\newtheorem*{genericthm*}{\thistheoremname}
\newenvironment{namedthm*}[1]
  {\renewcommand{\thistheoremname}{#1}%
   \begin{genericthm*}}
  {\end{genericthm*}}
\newcommand\cC{\mathcal{C}}
\newcommand\cM{\mathcal{M}}
\newcommand\cR{\mathcal{R}}
\newcommand\cU{\mathcal{U}}
\newcommand\SDT{\mathcal{SDT}}
\newcommand\SWC{\mathcal{SWC}}
\newcommand{\bN}{\mathbb{N}}
\newcommand{\bR}{\mathbb{R}}
\newcommand{\bC}{\mathbb{C}}
\newcommand{\bZ}{\mathbb{Z}}
\newcommand{\bCP}{\mathbb{CP}}
\newcommand\bn{\mathbf{n}}
\newcommand\bx{\mathbf{x}}
\newcommand\bz{\mathbf{z}}
\newcommand\bzero{\mathbf{0}}
\newcommand{\on}{\operatorname}
\renewcommand\Re{\on{Re}}
\newcommand\pt{{\on{pt}}}
\newcommand{\Fuk}{\on{Fuk}}
\newcommand{\comp}{C^2}
\renewcommand{\comp}{{\on{comp}}}
\newcommand{\seam}{{\on{seam}}}
\newcommand{\incom}{{\on{in}}}
\newcommand{\inte}{{\on{int}}}
\newcommand{\mk}{{\on{mark}}}
\renewcommand{\root}{{\on{root}}}
\newcommand{\new}{{\on{new}}}
\newcommand{\node}{{\on{node}}}
\newcommand{\spec}{{\on{spec}}}
\newcommand\qu{/\kern-.7ex/} 
\newcommand\lqu{\backslash \kern-.7ex \backslash}
\newcommand{\ol}{\overline}
\newcommand{\sr}{\stackrel}
\newcommand{\wt}{\widetilde}
\newcommand{\eps}{\epsilon}
\def\lra{\longrightarrow}
\newcounter{qcounter}
\newcommand\quotient[2]{
        \mathchoice
            {
                \text{\raise1ex\hbox{$#1$}\Big/\lower1ex\hbox{$#2$}}%
            }
            {
                #1\,/\,#2
            }
            {
                #1\,/\,#2
            }
            {
                #1\,/\,#2
            }
    }
\newcommand\quoti[2]{
                \text{\raise1ex\hbox{$#1$}/\lower1ex\hbox{$\scriptstyle#2$}}
  }
\newcommand\quot[2]{
                \text{\raise1ex\hbox{$#1\!\!$}/\lower1ex\hbox{$\!\scriptstyle#2$}}
  }
\newcommand\quo[2]{
                \text{\raise.8ex\hbox{$\scriptstyle#1\!$}/\lower.8ex\hbox{$\!\scriptstyle#2$}}
  }
\newcommand\qq[2]{
                \text{\raise.8ex\hbox{$#1\!$}/\lower.8ex\hbox{$#2$}}
}
\begin{document}

\title{Moduli spaces of witch curves topologically realize the 2-associahedra}
\author{Nathaniel Bottman}
\address{School of Mathematics, Institute for Advanced Study,
1 Einstein Dr, Princeton, NJ 08540}
\email{\href{mailto:nbottman@math.ias.edu}{nbottman@math.ias.edu}}

\begin{abstract}
For $r \geq 1$ and $\bn \in \bZ_{\geq0}^r\setminus\{\bzero\}$, we construct the compactified moduli space $\ol{2\cM}_\bn$ of witch curves of type $\bn$.
We equip $\ol{2\cM}_\bn$ with a stratification by the 2-associahedron $W_\bn$, and prove that $\ol{2\cM}_\bn$ is compact
and metrizable.
In addition, we show that the forgetful map $\ol{2\cM}_\bn \to \ol\cM_r$ to the moduli space of stable disk trees is continuous and respects the stratifications.
\end{abstract}

\maketitle

\section{Introduction}

In \cite{b:2ass}, the author constructed a collection of abstract polytopes (in particular, posets) called 2-associahedra.
There is a 2-associahedron $W_\bn$ for every $r\geq 1$ and $\bn \in \bZ_{\geq0}^r\setminus\{\bzero\}$, and they were introduced to model degenerations in the configuration space $2\cM_\bn$ of \textbf{stable witch curves}, whose interior parametrizes configurations of $r$ vertical lines in $\bR^2$ with $n_i$ marked points on the $i$-th line up to translations and positive dilations.
By identifying $\bR^2 \cup \{\infty\} \simeq S^2$, we can also view an element of $2\cM_\bn$ as a configuration of marked circles on $S^2$, where all the circles intersect at the south pole, up to M\"{o}bius transformations; both views are depicted in the following figure:

\begin{figure}[H]
\centering
\def\svgwidth{0.3\columnwidth}
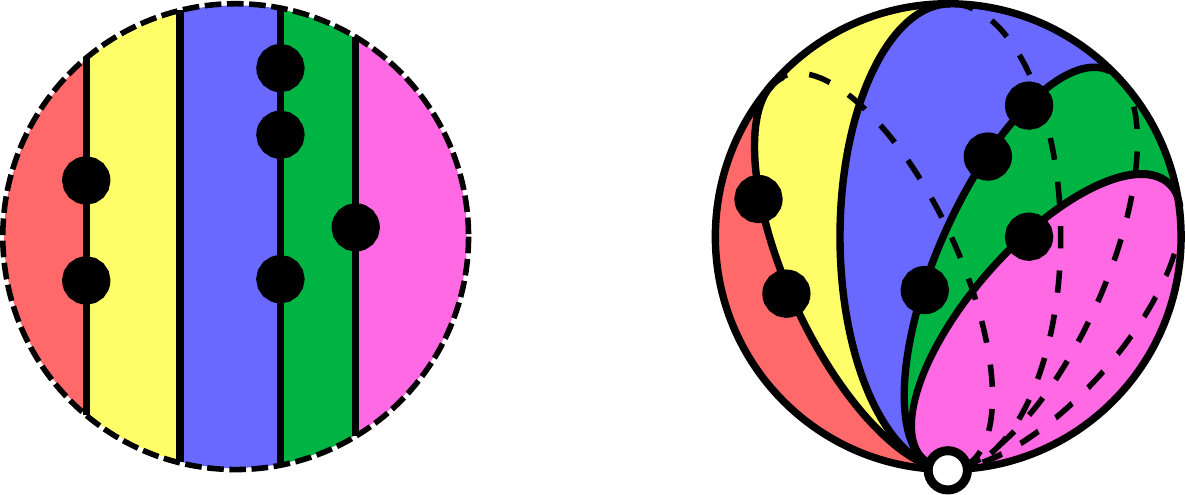
\label{fig:witch_ball}
\end{figure}

The purpose of this paper is to construct the compactified configuration space $\ol{2\cM}_\bn$, and to validate the construction of both $W_\bn$ and $\ol{2\cM}_\bn$ via the following main result:

\begin{theorem}
\label{thm:main}
For any $r \geq 1$ and $\bn \in \bZ^r_{\geq0}\setminus\{\bzero\}$, $\ol{2\cM}_\bn$ can be given the structure of a compact
metrizable space stratified by $W_\bn$.
The forgetful map $W_\bn \to K_r$ to an associahedron can be upgraded to a continuous map $\ol{2\cM}_\bn \to \ol\cM_r$ to the moduli space of stable disk trees that respects the stratifications.
\end{theorem}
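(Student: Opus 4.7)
The plan is to construct $\ol{2\cM}_\bn$ in stages: first as a set, decomposed into strata indexed by elements of $W_\bn$; then as a topological space via a Gromov-style notion of convergence; and finally to verify compactness, metrizability, and compatibility with the forgetful map to $\ol\cM_r$. For each $T \in W_\bn$, I would define a stratum $\cM_T$ as a product of open configuration spaces, one factor for each bubble of a witch curve of combinatorial type $T$: bubbles that carry vertical lines contribute factors of the open part of $2\cM_{\bn'}$ for smaller parameters $\bn'$, while the purely disk-tree bubbles (collapsed-line bubbles that retain only a tree of sphere components) contribute factors of the open part of $\cM_{r'}$. Setting $\ol{2\cM}_\bn := \bigsqcup_{T\in W_\bn}\cM_T$, the top stratum is the open space of $r$ labeled vertical lines in $\bR^2$ with $n_i$ points on the $i$-th line, modulo translations and positive dilations.

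Next I would equip $\ol{2\cM}_\bn$ with the topology generated by a Gromov-type convergence: $C_k \to C_\infty$ if, after applying suitable M\"obius transformations on each bubble component, the configurations converge on compact subsets away from nodes, the combinatorial type eventually refines that of $C_\infty$, and the gluing parameters for collapsing necks tend to zero. Hausdorffness then follows from uniqueness of Gromov limits up to the M\"obius symmetries, and the compatibility of stratum closures with the poset of $W_\bn$ follows from the definition of convergence, matching the codimension-one relations established in \cite{b:2ass}. Compactness is the analytic heart of the argument: given any sequence, I would extract a subsequence along which all pairwise cross-ratios of marked points on each line, and all comparative scaling ratios between lines, converge in $[0,\infty]$; whenever points or entire lines collide, I would rescale by an appropriate M\"obius transformation to extract a new bubble; the total combinatorial depth is bounded in terms of $|\bn|$ and $r$, so this process terminates at a stable witch curve.

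With compactness and Hausdorffness in hand, metrizability follows from Urysohn's theorem once second countability is verified, and second countability is inherited from the finite stratification together with the Euclidean model for each open stratum. The forgetful map is defined stratum-by-stratum: on $\cM_T$, one forgets all marked points on each line and then contracts the components that have become unstable, producing a stable disk tree whose combinatorial type is exactly $\pi(T) \in K_r$ under the poset-level forgetful map $\pi\colon W_\bn \to K_r$ from \cite{b:2ass}. Continuity is essentially automatic from the Gromov-convergence definition, since forgetting points only removes data that needs to converge, and respect for stratifications is built into the construction. The hard part will be orchestrating the rescaling procedure in the compactness argument so that the extracted limit is both stable \emph{and} carries precisely the combinatorial type predicted by the poset structure of $W_\bn$; this requires careful book-keeping of which points and which lines collide at which rates, and it is here that the specific combinatorics of the 2-associahedron must be matched to the analytic bubble-tree picture.
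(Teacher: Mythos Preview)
Your outline is broadly aligned with the paper's strategy: both build $\ol{2\cM}_\bn$ as a stratified set indexed by $W_\bn$, topologize via Gromov convergence, establish sequential compactness by rescaling, and deduce metrizability from compactness, Hausdorffness, and second countability. Two points of difference are worth noting. The paper's compactness proof is an induction on $|\bn|$: remove one marked point, invoke the inductive hypothesis, then reinsert the point and case-split on where it lands relative to the limit tree (four cases, Lemma~\ref{lem:2M_add_pt}); this avoids the simultaneous cross-ratio book-keeping you propose. Also, the forgetful map is simpler than you suggest: a stable witch curve in the paper's sense already carries the seam-tree data $(T_s,(\bx_\rho))$ as part of its structure, so the map to $\ol\cM_r$ is the bare projection $\bigl(2T,(\bx_\rho),(\bz_\alpha)\bigr)\mapsto\bigl(T_s,(\bx_\rho)\bigr)$ with no stabilization step.

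There is, however, a genuine gap in your plan. You write that the topology is ``generated by a Gromov-type convergence'' and that second countability is ``inherited from the finite stratification together with the Euclidean model for each open stratum.'' Neither of these is automatic. Declaring a topology from a class of convergent sequences does not by itself guarantee that the convergent sequences in that topology coincide with the original class; the paper handles this via a family of functions $\mu_\eps$ satisfying the hypotheses of Lemma~5.6.5 of \cite{ms:jh} (reproduced as Lemma~\ref{lem:2Mn_mu_top}), which simultaneously yields first countability and Hausdorffness. And a space assembled from finitely many second-countable strata is not second countable in general: one needs a countable basis that includes neighborhoods crossing strata. The paper explicitly flags that even for $\ol\cM_r$ this step had a gap in \cite{ms:jh}, repaired by a fix communicated by McDuff--Salamon; the same fix is invoked for $\ol{2\cM}_\bn$. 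Your proposal does not engage with either of these issues, and they are exactly where the work lies once compactness and uniqueness of limits are in hand.
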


\noindent This result is an important step toward the author's goal of defining a symplectic $(A_\infty,2)$-category \textsf{Symp}, in which the objects are certain symplectic manifolds and $\hom(M,N) \coloneqq \Fuk(M^-\times N)$,
where $\Fuk$ denotes the Fukaya category of a symplectic manifold.
Indeed, $(\ol{2\cM}_\bn)$ form the domain moduli spaces involved in the structure maps in \textsf{Symp}.
More progress toward the construction of \textsf{Symp} is described in \cite{b:sing, bw:compactness, b:2ass}.

In \S D of \cite{ms:jh}, McDuff--Salamon equip the compactified moduli space $\ol\cM_r(\bC)$ of
$r$-marked stable
genus-0 curves with a topology by including it into a product of $\bCP^1$'s via a collection of cross-ratio maps.
This is the obvious approach to try here, too, but the author was unable to make this technique work in this context.
Instead, we adapt the techniques from \S5 of the same book, in which McDuff--Salamon equipped the compactified moduli space of stable maps into a symplectic manifold with a topology in which the convergent sequences are those
that
Gromov-converge.
While this necessitates a certain amount of topological overhead in our setting, an advantage is that it will be straightforward to adapt the current work to the setting of witch maps when such a result is needed.


The construction of $\ol{2\cM}_\bn$ generalizes several earlier constructions of domain moduli spaces for pseudoholomorphic quilts.
Specifically, \cite{mw} and \cite{mww} construct several configuration spaces of disks decorated by interior circles, with marked points on the boundary and interior circles.
The data of such a configuration is equivalent to a configuration of circles with marked points on a sphere, as illustrated in the figure above.
\cite{mww} defines $\ol\cR^d$, $\ol\cR^{d,0}$, $\ol\cR^{d,e}$, and $\ol\cR^{d,0,0}$ (called associahedra, multiplihedra, biassociahedra, and bimultiplihedra, though the author of the current paper would rather reserve these names for the underlying posets).
In the notation of the current paper, these configuration spaces are $\ol{2\cM}_d$, $\ol{2\cM}_{d0}$, $\ol{2\cM}_{de}$, and $\ol{2\cM}_{d00}$.


\subsection{An example of Gromov convergence for witch curves}
\label{ss:coda}

As a coda to the introduction, we illustrate and motivate the definition of Gromov convergence in $\ol{2\cM}_\bn$ by an example.
For $\eps \in (0,\tfrac 1 2)$, consider the following configuration in $\ol{2\cM}_{10010}$
(here pictured with $\eps = 2/5$):

\vspace{-0.5em}
\begin{figure}[H]
\centering
\def\svgwidth{0.325\columnwidth}
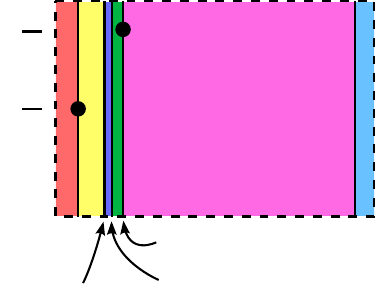
\end{figure}
\vspace{-1em}

\noindent In the limit as $\eps \to 0$, all lines but the right-most collide; the two marked points also collide.
We resolve these collisions using the well-known technique of \emph{soft rescaling}: whenever a marked point collides with a line (and in particular, with another marked point), we zoom in on the collision with just enough magnification that the colliding objects occupy a ``window'' of unit size.
If, in this zoomed-in view, there are still colliding objects, we again rescale, and so on inductively.

A decision must be made about what to do when lines without marked points collide; here, we have decided to remember the fashion in which such lines collide, a choice
that
is motivated by considerations of pseudoholomorphic quilts.
We implement this strategy by keeping track of the positions of the lines as points in $\bR$ and performing soft rescaling on these configurations in parallel with our soft rescalings of the configurations of lines and points in $\bR^2$.

Finally, we are ready to demonstrate soft rescaling for the family pictured above.
This is shown in the following figure, where the left-most view is the original configuration, and the remaining configurations are the rescaled views.
The arrows indicate that a configuration is produced by rescaling at the point that the arrow points to, with magnification labeling the arrow.
In the bottom of the figure, we show the soft rescalings of the configurations of the line positions in $\bR$.

\begin{figure}[H]
\centering
\def\svgwidth{1.0\columnwidth}
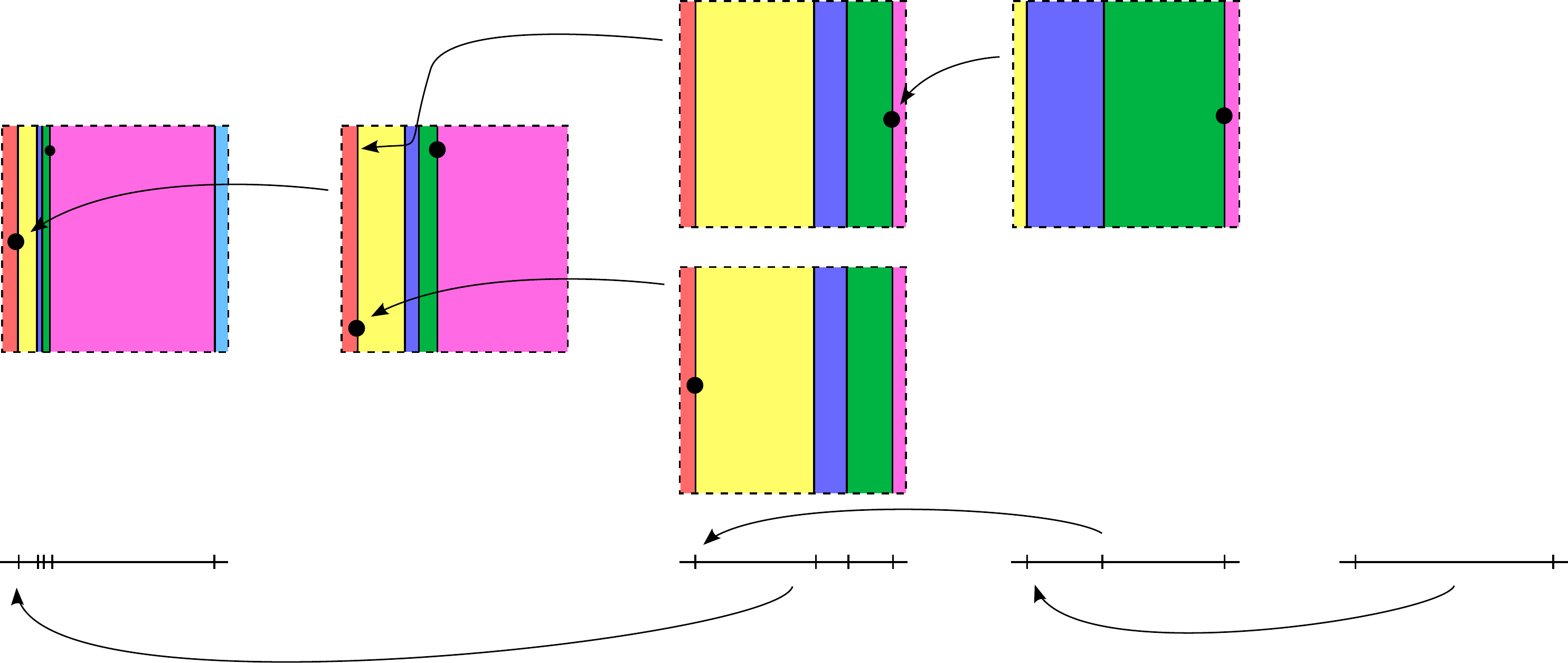
\end{figure}

We show the $\eps \to 0$ limit of this family in the following figure.
On the right, we show an equivalent view: the planes with marked vertical lines are replaced with spheres with marked circles.
In the tree of decorated spheres, the ``nodal points'' --- where the south pole of one sphere is attached to one of the circles on another sphere --- indicate that
we produced the upper sphere via a sequence of further rescalings of the rescalings we used to produce the lower sphere, and that these further rescalings were centered at the attachment point.

\begin{figure}[H]
\centering
\def\svgwidth{1.0\columnwidth}
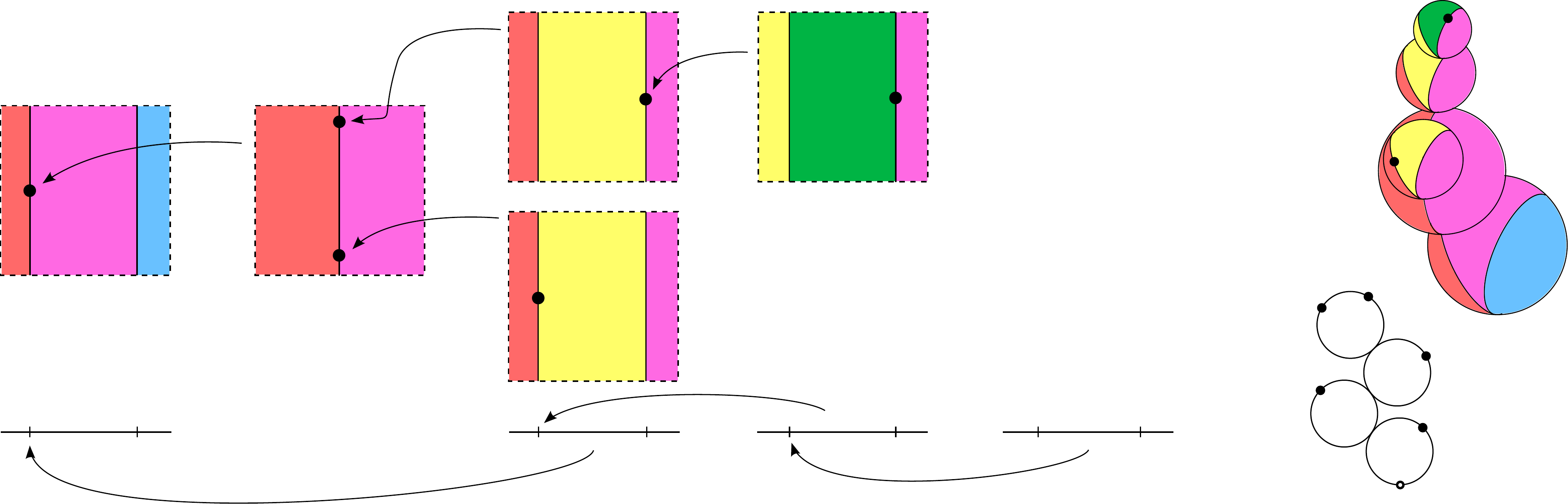
\end{figure}



\subsection{Acknowledgments}

The author thanks Mohammed Abouzaid, Helmut Hofer, Paul Seidel, and Katrin Wehrheim for their continuing support of this project.
A question of Satyan Devadoss led the author to the realization that when multiple unmarked lines collide, one should remember the relative speeds at which the lines collide.
This paper was written while the author was a Member and Schmidt Fellow at the Institute of Advanced Study and a Postdoctoral Research Fellow at Princeton University; he was supported by NSF Mathematical Sciences Postdoctoral Research Fellowship DMS-1606435.

\section{Construction of \texorpdfstring{$\ol{2\cM}_\bn$}{2Mn}}

In this section we prove Thm.~\ref{thm:main}.
Specifically, in \S\ref{ss:def} we construct $\ol{2\cM}_\bn$; in \S\ref{ss:compact} we show that every sequence in $\ol{2\cM}_\bn$ has a Gromov-convergent subsequence; in \S\ref{ss:unique_limits} we show that a Gromov-convergent sequence has a unique limit; and in \S\ref{ss:topology} we define a topology on $\ol{2\cM}_\bn$ in which the convergent sequences are the Gromov-convergent ones.

Before we construct $\ol{2\cM}_\bn$, we recall
the compactified moduli space $\ol\cM_r$ of disks with $r$ ``input'' and 1 ``output'' boundary marked points.
This moduli space is well-known:
see, for instance, \S4 of \cite{liu}, or Thm.~3.10 of \cite{DFHV}, which relies on ideas from \cite{FM}.
Nearly all of the results we describe below for $\ol{2\cM}_\bn$ have analogues for $\ol\cM_r$ --- in particular, $\ol\cM_r$ can be given a topology in which the convergent sequences are the Gromov-convergent ones, and with this topology it is compact,
metrizable, and stratified by $K_r$.
We will make use of these analogous results throughout this paper, mentioning them as we need them.
We now recall the definition of $\ol\cM_r$, making use of the notation for rooted ribbon trees from \S2, \cite{b:2ass}.
After the definition, we will give some motivation and illustrate some of the notation for rooted ribbon trees.

By convention, $\ol\cM_1 = \ol{2\cM}_{(1)} = \pt$.

\begin{definition}
\label{def:SDT}
A \emph{stable disk tree with $r \geq 2$ input marked points} is a pair $\bigl(T, (\bx_\rho)_{\rho \in V_\inte(T_s)}\bigr)$, where:
\begin{itemize}
\item $T$ is a stable rooted ribbon tree (RRT) with $r$ leaves.

\item For $\rho \in V_\inte(T)$, $\bx_\rho \in \bR^{\#\incom(\rho)}$ is a tuple satisfying $x_{\rho,1} < \cdots < x_{\rho,\#\incom(\rho)}$.
\end{itemize}
We say that two stable disk trees $\bigl(T,(\bx_\rho)\bigr)$, $\bigl(T',(\bx'_\rho)\bigr)$ are {\bf isomorphic} if there is an isomorphism of RRTs $f\colon T \to T'$ and a function $V_\inte(T) \to G_1\colon \rho \mapsto \phi_\rho$
(where $G_1$ is the reparametrization group $\bR \rtimes \bR_{>0}$ acting on $\bR$ by translations and positive dilations)
such that:
\begin{gather}
x'_{f(\rho),i} = \phi_\rho(x_{\rho,i}) \:\:\forall\:\: \rho \in V_\inte(T).
\end{gather}

We denote by $\SDT_r$ the collection of stable disk trees with $r$ input marked points, and we define the \emph{moduli space of stable disk trees with $r$ input marked points $\ol\cM_r$} to be the set of isomorphism classes of stable disk trees of this type.
For any stable RRT $T$ with $r$ leaves, define the corresponding \emph{strata} $\SDT_{r,T} \subset \SDT_r$, $\ol\cM_{r,T} \subset \ol\cM_r$ to be the set of all stable disk trees (resp.\ isomorphism classes thereof) of the form $\bigl(T,(\bx_\rho)\bigr)$.
We say that a stable disk tree is {\bf{smooth}} if its underlying RRT $T$ has only one interior vertex; we denote a smooth stable witch curve by the tuple $\bx \in \bR^r$ associated to the root.
\null\hfill$\triangle$
\end{definition}

\begin{remark}[motivation for Def.~\ref{def:SDT} from \S\ref{ss:coda}]
On the left in the following figure is the limit in $\ol{2\cM}_{10010}$ from \S\ref{ss:coda}.
\begin{figure}[H]
\centering
\def\svgwidth{0.45\columnwidth}
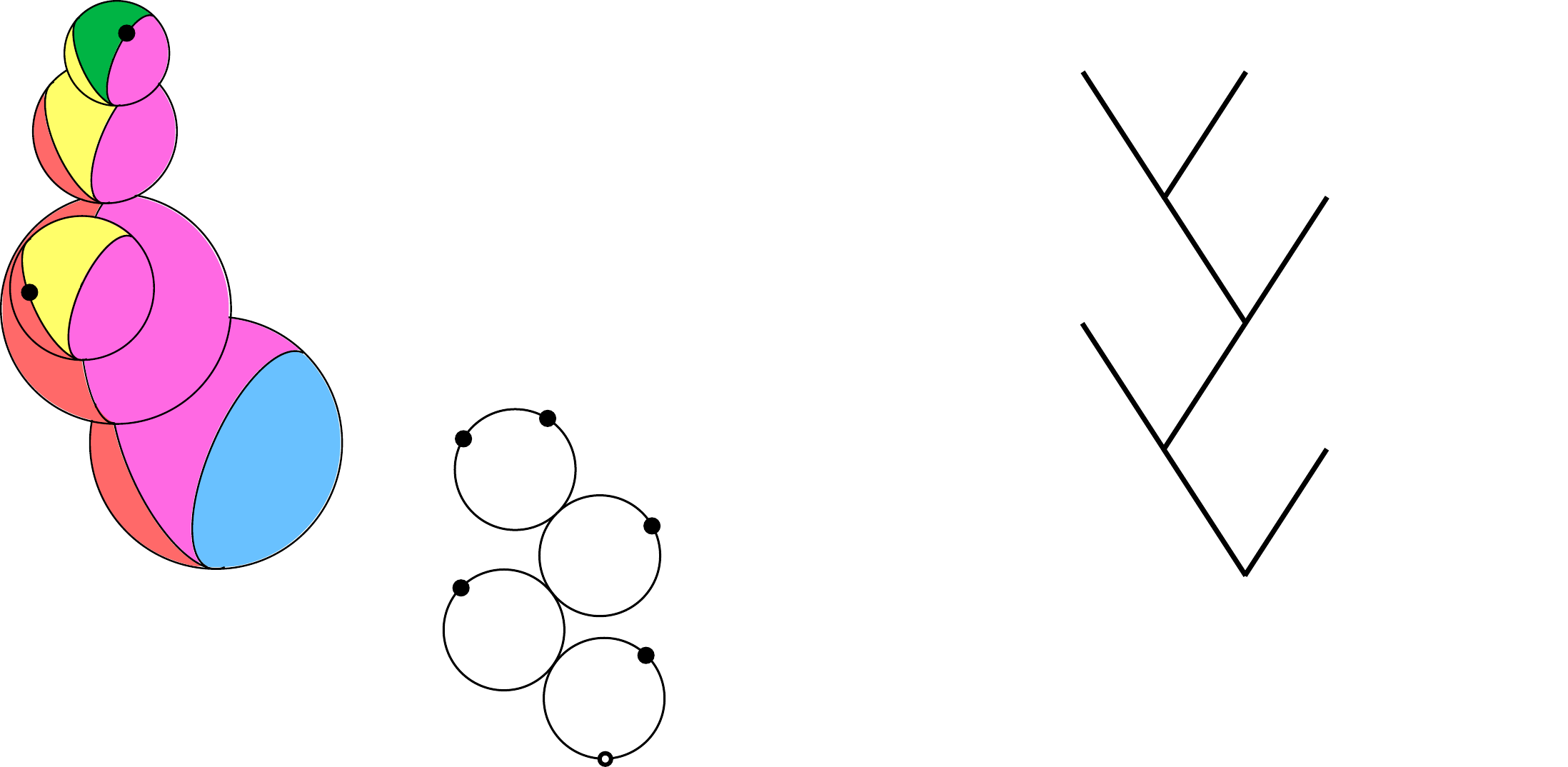
\end{figure}
\noindent As we explained in that subsection, this limit consists of a tree of decorated spheres, together with a datum (shown here as a tree of disks with boundary marked points) which tracks the seam positions.
This datum can be formulated as a stable disk tree $\bigl(T,(\bx_\rho)\bigr)$ as in Def.~\ref{def:SDT}, and on the right of this figure we show the RRT $T$.
Its interior vertices $\rho, \sigma, \tau, \upsilon$ correspond to the disks appearing on the left side of the figure, and the leaves correspond to the marked points (except for the bottommost marked point, which does not correspond to a vertex of $T$).
Each interior vertex $\rho,\sigma,\tau,\upsilon$ is assigned a tuple $\bx_\rho,\bx_\sigma,\bx_\tau,\bx_\upsilon$, which we think of as the $x$-positions of the seams.
\null\hfill$\triangle$
\end{remark}

\begin{example}
We recall a figure from \S2, \cite{b:2ass}, which illustrates some RRT notation in the case of a particular stable RRT:

\begin{figure}[H]
\centering
\def\svgwidth{0.8\columnwidth}
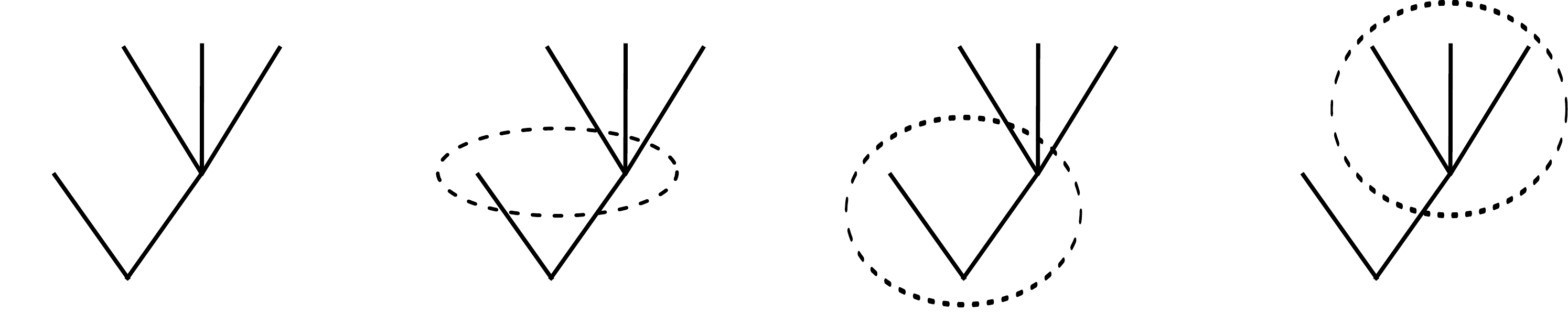
\label{fig:RRT_example}
\end{figure}
\noindent The leaves of $T$ are denoted $\lambda_1^T,\ldots,\lambda_4^T$, and the root (which is not considered a leaf) is denoted $\rho_\root^T$.
The interior vertices --- denoted $T_\inte$ or $V_\inte(T)$ --- are the non-leaf vertices.
The tree is oriented toward the root, and the set of incoming neighbors of a vertex $\rho$ is denoted $\incom(\rho)$.
(In fact, $\incom(\rho)$ inherits a total ordering from the ribbon structure of $T$.)
For distinct $\rho,\sigma \in T$, $T_{\rho\sigma}$ is the subtree consisting of those vertices $\tau$ such that the path from $\rho$ to $\tau$ passes through $\sigma$.
Finally, we denote $T_\sigma \coloneqq T_{\rho_\root\sigma}$.

This RRT is stable, because for every $\rho \in T_\inte$, $\incom(\rho)$ has at least two elements.
\null\hfill$\triangle$
\end{example}


\subsection{Definition of \texorpdfstring{$\ol{2\cM}_\bn$}{2Mn} as a set, and Gromov convergence}
\label{ss:def}

In this subsection we define \emph{stable witch trees}, isomorphism classes of which comprise $\ol{2\cM}_\bn$.
Throughout, we will denote by $\bR^2 \cup \{\infty\}$ the one-point compactification of $\bR^2$ (so $\bR^2 \cup \{\infty\} \cong S^2$).
We will make use of the reparametrization group $G_2 \coloneqq \bR^2 \rtimes \bR_{>0}$ acting on $\bR^2$ by translations and positive dilations.
This action of $G_2$ on $\bR^2$ extends to an action on $\bR^2 \cup \{\infty\}$, by defining $\phi(\infty) \coloneqq \infty$ for every $\phi \in G_2$.
There is a projection $p\colon G_2 \to G_1$, defined by sending $\bigl((x,y) \mapsto (ax+b_1,ax+b_2)\bigr) \in G_2$ to $\bigl(x \mapsto ax+b_1\bigr) \in G_1$.
We will overload notation and also denote by $p$ the projection $\bR^2 \to \bR^1$ onto the first factor.
Finally, we freely use the stable tree-pair notation introduced in \S3, \cite{b:2ass}.
In that paper, stable tree-pairs were denoted $T_b \sr{f}{\to} T_s$; here, we will use the notation $T_b \sr{\pi}{\to} T_s$.

\begin{definition}
\label{def:SWC}
A \emph{stable witch curve of type $\bn \in \bZ_{\geq0}^r\setminus\{\bzero\}$} is a triple
\begin{align}
\Bigl(2T=(T_b \sr{\pi}{\to} T_s), (\bx_\rho)_{\rho \in V_\inte(T_s)}, (\bz_\alpha)_{\alpha \in V_\comp(T_b)}\Bigr),
\end{align}
where:
\begin{itemize}
\item $2T$ is a stable tree-pair of type $\bn$.

\item For $\rho \in V_\inte(T)$, $\bx_\rho \in \bR^{\#\incom(\rho)}$ is a tuple satisfying $x_{\rho,1} < \cdots < x_{\rho,\#\incom(\rho)}$.

\item For $\alpha \in V_\comp(T_b)$, $\bz_\alpha \subset \bR^2$ is a collection
\begin{align}
\bz_\alpha = \left(z_{\alpha,ij} = (x_{\alpha,i},y_{\alpha,ij}) \:\left|\: {{\incom(\alpha) = (\beta_1,\ldots,\beta_{\#\incom(\alpha)}),}
\atop
{1\leq i\leq \#\incom(\alpha), \: 1 \leq j \leq \#\incom(\beta_i)}}\right.\right)
\end{align}
satisfying $x_{\alpha,1} < \cdots < x_{\alpha,\#\incom(\alpha)}$ and $y_{\alpha,i,1} < \cdots < y_{\alpha,i,\#\incom(\beta_i)}$ for every $i$.
Moreover, for $\alpha \in V_\comp^{\geq2}(T_b)$ we require $(x_{\alpha,1},\ldots,x_{\alpha,\#\incom(\alpha)}) = (x_{\pi(\alpha),1},\ldots,x_{\pi(\alpha),\#\incom(\pi(\alpha))})$.
\end{itemize}
We say that two stable witch curves $\bigl(2T,(\bx_\rho),(\bz_\alpha)\bigr)$, $\bigl(2T',(\bx'_\rho),(\bz'_\alpha)\bigr)$ are {\bf isomorphic} if there is an isomorphism of
stable
tree-pairs $2f\colon 2T \to 2T'$ and functions $V_\inte(T_s) \to G_1\colon \rho \mapsto \phi_\rho$ and $V_\comp(T_b) \to G_2\colon \alpha \mapsto \psi_\alpha$ such that:
\begin{gather}
z'_{f_b(\alpha),ij} = \psi_\alpha(z_{\alpha,ij}) \:\:\forall\:\: \alpha \in V_\comp(T_b),
\qquad
x'_{f_s(\rho),i} = \phi_\rho(x_{\rho,i}) \:\:\forall\:\: \rho \in V_\inte(T_s), \\
p(\psi_\alpha) = \phi_{\pi(\alpha)} \:\:\forall\:\: \alpha \in V_\comp^{\geq 2}(T_b). \nonumber
\end{gather}

\noindent We denote the collection of stable witch curves of type $\bn$ by $\SWC_\bn$, and we define the \emph{moduli space $\ol{2\cM}_\bn$ of stable witch curves of type $\bn$} to be the set of isomorphism classes of stable witch curves of this type.
For any
stable
tree-pair $2T$ of type $\bn$, define the corresponding \emph{strata} $\SWC_{\bn,2T} \subset \SWC_\bn$, $\ol{2\cM}_{\bn,2T} \subset \ol{2\cM}_\bn$ to be the set of all stable witch curves (resp.\ isomorphism classes thereof) of the form $\bigl(2T,(\bx_\rho),(\bz_\alpha)\bigr)$.
We say that a stable witch curve is {\bf{smooth}} if its underlying
stable
tree-pair $2T$ has the property that $V_\inte(T_s)$ and $V_\comp(T_b)$ each contain only one element; we denote a smooth stable witch curve by the pair
$(\bx,\bz) \in \bR^r\times\bR^{|\bn|+r}$
associated to the roots of $T_s$ resp.\ $T_b$.
\null\hfill$\triangle$
\end{definition}

\begin{remark}[motivation for Def.~\ref{def:SWC} from \S\ref{ss:coda}]
Once again, on the left in the following figure is the limit in $\ol{2\cM}_{10010}$ from \S\ref{ss:coda}.
\begin{figure}[H]
\centering
\def\svgwidth{0.65\columnwidth}
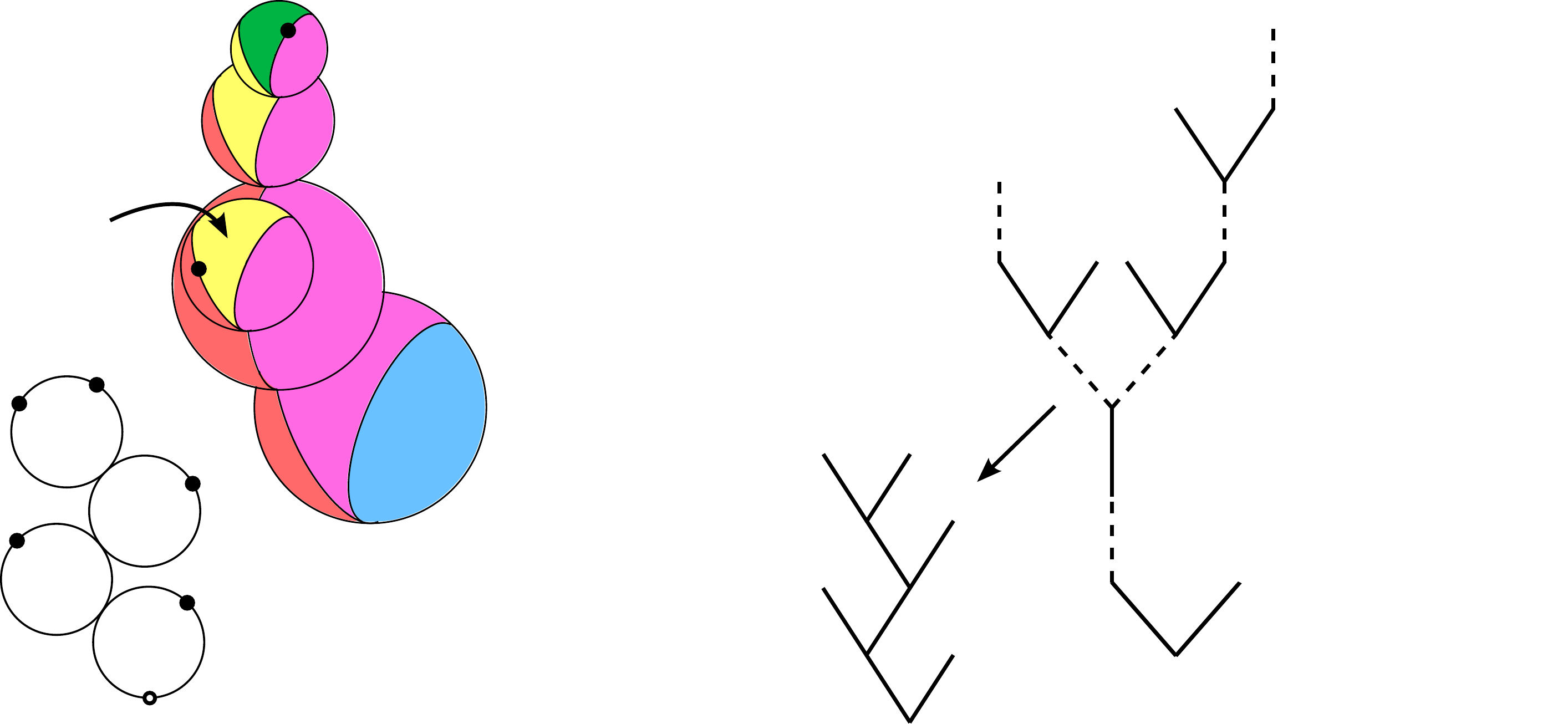
\end{figure}
\noindent This limit can be formulated as a stable witch curve $\bigl(2T,(\bx_\rho),(\bz_\alpha)\bigr)$ as in Def.~\ref{def:SWC}, and on the right of this figure we show the tree-pair $2T = T_b \to T_s$.
As explained in \S3, \cite{b:2ass}, the vertices of $T_b$ are partitioned as $V(T_b) = V_\comp(T_b) \sqcup V_\seam(T_b) \sqcup V_\mk(T_b)$ (``component vertices'', ``seam vertices'', and ``marked point vertices'').
The component vertices are labeled as $\alpha,\beta,\gamma,\delta,\eps$ in this figure, and they correspond to the spheres appearing on the left side of the figure.
Each component vertex $\alpha,\beta,\gamma,\delta,\eps$ is assigned a tuple $\bz_\alpha,\bz_\beta,\bz_\gamma,\bz_\delta,\bz_\eps$, which we think of as the positions of the special (marked and nodal) points.
Each dashed edge corresponds to a marked or nodal point.
\null\hfill$\triangle$
\end{remark}


\begin{example}
\label{ex:tree-pair_examples}
We recall a figure from \S3, \cite{b:2ass}, which illustrates some tree-pair notation in the case of a particular tree-pair $2T$:

\begin{figure}[H]
\centering
\def\svgwidth{0.95\columnwidth}
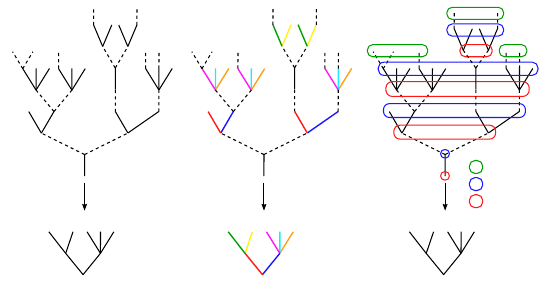
\label{fig:tree-pair_examples}
\end{figure}

\noindent $2T = T_b \sr{\pi}{\to} T_s$ consists of a ``bubble tree'', a ``seam tree'', and a map from the former to the latter.
Both $T_b$ and $T_s$ are RRTs, and $T_b$ has additional structure.
In particular, the vertices of $T_b$ are partitioned as $V_\comp(T_b)\sqcup V_\seam(T_b) \sqcup V_\mk(T_b)$, as shown on the right, and the edges alternate between solid and dashed ones.
The elements of $V_\mk(T_b)$ are denoted $\mu_{ij}^{T_b}$, as shown on the left, and the root of $T_b$ is denoted $\alpha_\root^{T_b} \in V_\comp(T_b)$.
The coherence map $\pi\colon T_b \to T_s$ is required to satisfy several conditions, as recorded in \S3, \cite{b:2ass}.
In the middle of this figure, we indicate how $\pi$ acts: we color the edges of $T_s$, and use those same colors to show which edges in $T_b$ are identified with the various edges of $T_s$.
Some edges in $T_b$ are contracted by $\pi$, which we indicate by using black.

This tree-pair is stable, because (1) $T_s$ is a stable RRT, and (2) for every $\alpha \in V_\comp(T_b)$ with $\incom(\alpha)$ denoted $(\beta_1,\ldots,\beta_k)$, either $k\geq2$ and there is a $\beta_i$ with $\#\incom(\beta_i)\geq1$, or $k=1$ and $\#\incom(\beta_1)\geq2$.
\null\hfill$\triangle$
\end{example}


If $\bigl(2T, (\bx_\rho), (\bz_\alpha)\bigr)$ is a stable witch curve and $\alpha \in V_\comp(T_b), \beta \in V_\comp(T_b) \cup (\mu_{ij})_{i,j}$ are distinct, then we define $z_{\alpha\beta} \in \bR^2 \cup \{\infty\}$ like so: Define $(\alpha=\gamma_1,\gamma_2,\ldots,\gamma_k=\beta)$ to be the
path from $\alpha$ to $\beta$.
If $\gamma_2$ is closer to the root than $\alpha$, then we define $z_{\alpha\beta}\coloneqq\infty$.
If $\gamma_2$ is the $i$-th incoming neighbor of $\alpha$ and $\gamma_3$ is the $j$-th incoming neighbor of $\gamma_2$, then we define $z_{\alpha\beta} \coloneqq z_{\alpha,ij}$.
For distinct $\rho \in V_\inte(T_s)$, $\sigma \in V(T_s)$, we define $x_{\rho\sigma}$ similarly.
For $\alpha, \beta$ as above, we define $x_{\alpha\beta} \coloneqq p(z_{\alpha\beta})$.
For $\alpha \in V_\comp(T_b)$ and $\rho \in V(T_s) \setminus \{\pi(\alpha)\}$, set $x_{\alpha\rho} \coloneqq x_{\pi(\alpha)\rho}$.
For $\alpha \in V_\comp^1(T_b)$, extend this definition by setting
\begin{align}
\label{eq:def_xalpharho_alpha_in_V1}
x_{\alpha\rho}
\coloneqq
\begin{cases}
x_{\alpha,1}, & \lambda_i \in (T_s)_{\pi(\alpha)}, \\
\infty, & \text{otherwise}.
\end{cases}
\end{align}
Finally, for any $\alpha \in V_\comp(T_b)$, we denote $z_{\alpha\mu_\infty^{T_b}} \coloneqq \infty$, $x_{\alpha\lambda_\infty^{T_s}} \coloneqq \infty$; here $\mu_\infty^{T_b}$ and $\lambda_\infty^{T_s}$ are formal
expressions,
rather than vertices in $T_b$ resp.\ $T_s$, which represent the fact that the root of the bubble tree and seam tree should be thought of as carrying a single ``output'' marked point.

We define the \textbf{set of nodal points} and \textbf{set of special points} of any interior vertex $\alpha$ like so:
\begin{gather}
Z_\alpha^\node \coloneqq (z_{\alpha\beta} \:|\: \beta \in V_\comp(T_b) \setminus \{\alpha\}) \subset \bR^2\cup\{\infty\},
\\
Z_\alpha^\spec \coloneqq (z_{\alpha\beta} \:|\: \beta \in (V_\comp(T_b)\cup (\mu_{ij})_{i,j} \cup \{\mu_\infty^{T_b}\}) \setminus \{\alpha\}) \subset \bR^2\cup\{\infty\}. \nonumber
\end{gather}

Before we define Gromov convergence for stable witch curves, we need two preliminaries: a way to express the property that two vertices in $V_\comp(T_b)$ correspond to two spheres attached via a nodal point, and a notion of surjection for
stable
tree-pairs.
The first notion is straightforward: for any
stable
tree-pair $T_b \to T_s$, we say that $\alpha, \beta \in V_\comp(T_b) \cup (\mu_{ij})_{i,j}$ are \textbf{contiguous} if the path from $\alpha$ to $\beta$ consists of $\alpha$, $\beta$, and a third vertex (necessarily in $V_\seam(T_b)$).
The second notion is less obvious.
If $2T'$ is the result of making a single move on $2T$ (in the sense of \S3.1, \cite{b:2ass}), then there are evident maps $T_b' \to T_b$, $T_s' \to T_s$.
Composing these maps inductively, we see that for any
stable
tree-pairs with $2T' < 2T$, there are induced maps $T_b' \to T_b$, $T_s' \to T_s$.
We call any map obtained in this fashion a \textbf{
stable
tree-pair surjection}.
Note that for any
stable
tree-pair surjection $2T' \to 2T$, the restriction $T_s' \to T_s$ to seam trees is an RRT surjection
as in \S2.1, \cite{b:2ass}
.

In the following definition, and throughout this paper, ``u.c.s.'' means ``uniformly on compact subsets''.
We refer to the notion of Gromov-convergence of a sequence $\bigl(T^\nu, (\bx_\rho^\nu)\bigr)$ of stable disk trees, which is similar to the notion of Gromov convergence of a sequence of stable genus-0 curves as in Def.\ D.5.1, \cite{ms:jh}.
The main difference with that notion is that for a sequence of stable disk trees to Gromov-converge, the maps $f^\nu\colon T^\nu \to T$ must be RRT homomorphisms.

\begin{definition} \label{def:2M_gromov_converge}
A sequence $\bigl(2T^\nu, (\bx_\rho^\nu), (\bz_\alpha^\nu)\bigr) \in \SWC_\bn$ is said to {\bf{Gromov-converge}} to $\bigl(2T, (\bx_\rho), (\bz_\alpha)\bigr)$ if the following conditions hold:
\begin{itemize}
\item $\bigl(T_s^\nu, (\bx_\rho^\nu)\bigr)$ Gromov-converges to $\bigl(T_s, (\bx_\rho)\bigr)$ via some $f^\nu\colon T_s \to T_s^\nu$ and $(\phi_\rho^\nu) \subset G_1$.

\item For $\nu$ sufficiently large, there is a
stable
tree-pair surjection $2f^\nu\colon 2T \to 2T^\nu$ covering $f^\nu\colon T_s \to T_s^\nu$ and a collection of reparametrizations $(\psi_\alpha^\nu)_{\alpha \in V_\comp(T_b)} \subset G_2$ such that the following hold:
\begin{itemize}
\item[] ({\sc restriction}) For $\alpha \in V_\comp^{\geq2}(T_b)$, $p(\psi_\alpha^\nu) = \phi_{\pi(\alpha)}^\nu$.

\item[] ({\sc rescaling}) If $\alpha,\beta \in V_\comp(T_b)$ are contiguous, and if $\nu_j$ is a subsequence such that $f_b^{\nu_j}(\alpha) = f_b^{\nu_j}(\beta)$, then the sequence $\psi_{\alpha\beta}^{\nu_j} \coloneqq (\psi_\alpha^{\nu_j})^{-1} \circ \psi_\beta^{\nu_j}$ converges to $z_{\alpha\beta}$ u.c.s.\ away from $z_{\beta\alpha}$.

\item[] ({\sc special point}) If $\alpha \in V_\comp(T), \beta \in V_\comp(T_b) \cup (\mu_{ij})_{i,j}$ are contiguous, and if $\nu_j$ is a subsequence such that $f_b^{\nu_j}(\alpha) \neq f_b^{\nu_j}(\beta)$, then:
\begin{align}
z_{\alpha\beta} = \lim_{j\to\infty} (\psi_\alpha^{\nu_j})^{-1}\Bigl(z^{\nu_j}_{f_b^{\nu_j}(\alpha)f_b^{\nu_j}(\beta)}\Bigr).
\hspace{0.5in}\triangle
\end{align}
\end{itemize}
\end{itemize}
\end{definition}

\noindent
Note that
if $\bigl(2T^\nu, (\bx_\rho^\nu), (\bz_\alpha^\nu)\bigr)$ Gromov-converges to $\bigl(2T, (\bx_\rho), (\bz_\alpha)\bigr)$ via $(\phi_\rho^\nu)$ and $(\psi_\alpha^\nu)$, and $(\wt\phi^\nu_\rho)_{\rho\in V_\inte(T_s^\nu)} \subset G_1$ and $(\wt\psi^\nu_\alpha)_{\alpha\in V_\comp(T_b^\nu)} \subset G_2$ are any sequences of reparametrizations satisfying
\begin{align}
p(\wt\psi_\alpha^\nu) = \wt\phi_{\pi^\nu(\alpha)}^\nu \:\:\forall\:\: \alpha \in V_\comp^{\geq2}(T_b^\nu),
\end{align}
then $\bigl(2T^\nu, (\wt\phi^\nu_\rho(\bx_\rho^\nu)), (\wt\psi^\nu_\alpha(\bz_\alpha^\nu))\bigr)$ Gromov-converges to $\bigl(2T, (\bx_\rho), (\bz_\alpha)\bigr)$ via $\bigl(\wt\phi_{f_s^\nu(\rho)}^\nu \circ \phi_\rho^\nu \bigr)_{\rho \in V_\inte(T_s)}$ and $\bigl(\wt\psi_{f_b^\nu(\alpha)}^\nu \circ \psi_\alpha^\nu \bigr)_{\alpha \in V_\comp(T_b)}$.

The following lemma shows that Gromov convergence in $\ol{2\cM}_\bn$ actually implies {\it{a priori}} stronger versions of the {\sc (rescaling)} and {\sc (special point)} axioms; for simplicity, we state it in the case that the surjection $2T \to 2T^\nu$ is fixed.

\begin{lemma} \label{lem:2M_gromov_conseq}
Suppose that $\bigl(\wt{2T}, (\bx_{\wt\rho}^\nu), (\bz_{\wt\alpha}^\nu)\bigr) \subset \SWC_\bn$ Gromov-converges to $\bigl(2T,(\bx_\rho), (\bz_\alpha)\bigr)$ via $2f\colon 2T \to \wt{2T}$, $(\phi_\rho^\nu)$, and $(\psi_\alpha^\nu)$.
Then the following properties hold.
\begin{itemize}
\item[] ({\sc Rescaling'}) For any distinct $\alpha,\beta \in V_\comp(T_b)$ with $f_b(\alpha) = f_b(\beta)$, the sequence $\psi_{\alpha\beta}^\nu$ converges to $z_{\alpha\beta}$ u.c.s.\ away from $z_{\beta\alpha}$.

\item[] ({\sc Special point'}) For any $\alpha \in V_\comp(T_b)$, $\beta \in V_\comp(T_b) \cup (\mu_{ij})_{i,j}$ with $f_b(\alpha) \neq f_b(\beta)$, the equality $z_{\alpha\beta} = \lim_{\nu\to\infty} (\psi_\alpha^\nu)^{-1}(z_{f_b(\alpha)f_b(\beta)})$ holds.
\end{itemize}
\end{lemma}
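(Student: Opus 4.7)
The plan is to reduce both assertions to the original ({\sc rescaling}) and ({\sc special point}) axioms by decomposing the path between $\alpha$ and $\beta$ in $T_b$ into contiguous segments. Write this path as $\alpha = \gamma_0, \sigma_1, \gamma_1, \sigma_2, \ldots, \sigma_k, \gamma_k = \beta$, with $\sigma_i \in V_\seam(T_b)$, $\gamma_0,\ldots,\gamma_{k-1} \in V_\comp(T_b)$, and $\gamma_k \in V_\comp(T_b) \cup (\mu_{ij})_{i,j}$; then each consecutive pair $(\gamma_i, \gamma_{i+1})$ is contiguous. The key algebraic input is the telescoping identity
\begin{align}
\psi^\nu_{\alpha\beta} = \psi^\nu_{\gamma_0\gamma_1} \circ \psi^\nu_{\gamma_1\gamma_2} \circ \cdots \circ \psi^\nu_{\gamma_{k-1}\gamma_k},
\end{align}
which follows from $\psi^\nu_{\gamma_i\gamma_{i+1}} = (\psi^\nu_{\gamma_i})^{-1}\circ\psi^\nu_{\gamma_{i+1}}$.

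For ({\sc Rescaling}$'$), I would first record the tree-theoretic fact that the preimage of any vertex under a stable tree-pair surjection is a connected subtree of $T_b$; this can be checked by induction on the number of single moves used to build $2f$, inspecting each move type from \S3.1, \cite{b:2ass}. Applied to the hypothesis $f_b(\alpha) = f_b(\beta)$, this forces $f_b(\gamma_i) = f_b(\gamma_{i+1})$ for every $i$, so the original ({\sc rescaling}) axiom --- used on the full sequence, since the surjection is fixed --- gives $\psi^\nu_{\gamma_i\gamma_{i+1}} \to z_{\gamma_i\gamma_{i+1}}$ u.c.s.\ away from $z_{\gamma_{i+1}\gamma_i}$. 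One then chains the convergences via the telescoping identity: for $w$ in a compact set away from $z_{\beta\alpha}$, the innermost factor $\psi^\nu_{\gamma_{k-1}\gamma_k}(w)$ converges to the constant $z_{\gamma_{k-1}\gamma_k}$, and iteratively applying $\psi^\nu_{\gamma_{i-1}\gamma_i}$ is legitimate because $z_{\gamma_i\gamma_{i+1}} \neq z_{\gamma_i\gamma_{i-1}}$ (the neighbors $\gamma_{i-1}$ and $\gamma_{i+1}$ lie in different incoming subtrees of $\gamma_i$). The limit is $z_{\gamma_0\gamma_1} = z_{\alpha\beta}$.

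For ({\sc Special point}$'$), with $f_b(\alpha) \neq f_b(\beta)$, let $k \geq 1$ be minimal with $f_b(\gamma_k) \neq f_b(\alpha)$. The contiguous pair $(\gamma_{k-1}, \gamma_k)$ has distinct $f_b$-images, so the original ({\sc special point}) axiom yields $(\psi^\nu_{\gamma_{k-1}})^{-1}\bigl(z^\nu_{f_b(\gamma_{k-1})f_b(\gamma_k)}\bigr) \to z_{\gamma_{k-1}\gamma_k}$. Because $\gamma_k$ is the first vertex on the path that escapes the fiber over $f_b(\alpha)$, the direction from $f_b(\alpha) = f_b(\gamma_{k-1})$ toward $f_b(\gamma_k)$ agrees with the direction toward $f_b(\beta)$, hence $z^\nu_{f_b(\gamma_{k-1})f_b(\gamma_k)} = z^\nu_{f_b(\alpha)f_b(\beta)}$. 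Factoring $(\psi^\nu_\alpha)^{-1} = \psi^\nu_{\alpha\gamma_{k-1}} \circ (\psi^\nu_{\gamma_{k-1}})^{-1}$ and invoking ({\sc Rescaling}$'$) on $(\alpha, \gamma_{k-1})$ (which share an $f_b$-image), the composition converges to $z_{\alpha\gamma_{k-1}} = z_{\alpha\beta}$; the step where $k=1$ is trivial and needs no ({\sc Rescaling}$'$) input. The required separation $z_{\gamma_{k-1}\gamma_k} \neq z_{\gamma_{k-1}\alpha}$ again follows because $\alpha$ and $\gamma_k$ exit $\gamma_{k-1}$ in opposite directions along the path.

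The main obstacle will be the u.c.s.\ chaining in ({\sc Rescaling}$'$): each composition in the telescoping identity feeds a convergent sequence into the next reparametrization, and one must ensure that the limit point stays in the ``good'' region, away from the opposite special point, so that uniform convergence on compact subsets survives composition. This is handled throughout by the refrain that distinct incoming subtrees of a given component vertex correspond to distinct special points on the associated sphere.
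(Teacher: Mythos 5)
Your proof is correct, and the core strategy --- decompose the $\alpha$--$\beta$ path into contiguous segments, apply the base axioms segmentwise, and chain the resulting u.c.s.\ convergences using non-self-intersection of the path to keep each intermediate limit point in the good region --- is the same as the paper's. For ({\sc Rescaling}$'$) the paper phrases this as an induction on path length while you unwind the telescoping composition iteratively; these are interchangeable. For ({\sc Special point}$'$) your reorganization is genuinely tidier: the paper inducts on path length with a case split at the penultimate vertex $\gamma$, and its second branch tacitly invokes ({\sc Rescaling}$'$) on $\psi^\nu_{\alpha\gamma}$, which presupposes $f_b(\alpha)=f_b(\gamma)$ --- a hypothesis the paper does not stop to record. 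Your choice of $m$ minimal with $f_b(\gamma_m)\neq f_b(\alpha)$ guarantees $f_b(\alpha)=f_b(\gamma_{m-1})$ by construction, so ({\sc Rescaling}$'$) applies to $\psi^\nu_{\alpha\gamma_{m-1}}$ for free, and the single application of ({\sc special point}) at the escape edge is exactly where the directional identity $z^\nu_{f_b(\gamma_{m-1})f_b(\gamma_m)}=z^\nu_{f_b(\alpha)f_b(\beta)}$ is most transparent. You also correctly flag the tree-theoretic input --- that $f_b$-fibers are connected subtrees, so $f_b(\alpha)=f_b(\beta)$ propagates equality along the entire intermediate path --- which the paper asserts without comment; checking it move-by-move, as you suggest, is the right way to justify it.
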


\begin{proof}
\begin{itemize}
\item[] ({\sc Rescaling'}) Denote by $(\alpha=\gamma_1,\ldots,\gamma_k=\beta)$ the vertices in $V_\comp(T_b)$ through which the path from $\alpha$ to $\beta$ passes, and note that $f_b(\alpha)=f_b(\beta)$ implies $f_b(\alpha=\gamma_1)=f_b(\gamma_2)=\cdots=f_b(\gamma_k=\beta)$.
We prove the claim by induction on $k$.
The $k=2$ case is exactly {\sc (rescaling)}.
Suppose that we have proven the claim up to and including some particular $k$; we now must prove the claim in the case that the path from $\alpha$ to $\beta$ has length $k+1$.
By assumption, $\psi_{\alpha\gamma_k}^\nu$ converges to $z_{\alpha\gamma_k}$ u.c.s.\ away from $z_{\gamma_k\alpha}$ and $\psi_{\gamma_k\beta}^\nu$ converges to $z_{\gamma_k\beta}$ u.c.s.\ away from $z_{\beta\gamma_k}$.
The fact that $(\gamma_1,\ldots,\gamma_{k+1})$ does not intersect itself implies $z_{\gamma_k\beta}\neq z_{\gamma_k\alpha}$, $z_{\alpha\gamma_k} = z_{\alpha\beta}$, and $z_{\beta\gamma_k} = z_{\beta\alpha}$, so it follows that $\psi_{\alpha\beta}^\nu = \psi_{\alpha\gamma_k}^\nu\circ\psi_{\gamma_k\beta}^\nu$ converges to $z_{\alpha\beta}$ u.c.s.\ away from $z_{\beta\alpha}$.

\medskip

\item[] ({\sc special point'}) Denote by $(\alpha=\gamma_1,\ldots,\gamma_k=\beta)$ the vertices in $V_\comp(T_b) \cup (\mu_{ij})_{i,j}$ through which the path from $\alpha$ to $\beta$ passes.
We prove the claim by induction on $k$.
The $k=2$ case is exactly {\sc (special point)}.
Suppose that we have proven the claim up to and including some particular $k$; we now must prove the claim in the case that the path from $\alpha$ to $\beta$ includes $k+1$ elements of $V_\comp(T_b) \cup (\mu_{ij})_{i,j}$.
If $f_b(\gamma_{k-1}) = f_b(\beta)$, then the claim follows from the inductive hypothesis:
\begin{align}
z_{\alpha\beta} = z_{\alpha\gamma_{k-1}} = \lim_{\nu\to\infty} (\psi_\alpha^\nu)^{-1}(z_{f(\alpha)f(\gamma_{k-1})}^\nu) = \lim_{\nu\to\infty} (\psi_\alpha^\nu)^{-1}(z_{f_b(\alpha)f_b(\beta)}^\nu).	
\end{align}
Otherwise, we use the inductive hypothesis and the inequality $z_{\gamma_{k-1}\beta} \neq z_{\gamma_{k-1}\alpha}$:
\begin{align}
z_{\alpha\beta}
=
z_{\alpha\gamma_{k-1}}
=
\lim_{\nu\to\infty} \psi_{\alpha\gamma_{k-1}}^\nu\bigl((\psi_{\gamma_{k-1}}^\nu)^{-1}(z_{f_b(\gamma_{k-1})f_b(\beta)}^\nu)\bigr)
=
\lim_{\nu\to\infty} (\psi_\alpha^\nu)^{-1}(z_{f_b(\alpha)f_b(\beta)}^\nu).
\end{align}
\end{itemize}
\end{proof}

Next, we prove
an alternate version
of {\sc(special point)} in the case of a Gromov-convergent sequence of smooth stable witch curves.

\begin{lemma} \label{lem:V1_x-limits}
Suppose that $(\bx^\nu,\bz^\nu) \subset \SWC_\bn$ Gromov-converges to $\bigl(2T,(\bx_\rho),(\bz_\alpha)\bigr)$ via $(\phi_\rho^\nu)$ and $(\psi_\alpha^\nu)$.
For any $\alpha \in V_\comp^1(T_b)$ and $\lambda_i \in V(T_s)\setminus V_\inte(T_s)$, the equality $x_{\alpha\lambda_i} = \lim_{\nu\to\infty} p\bigl((\psi_\alpha^\nu)^{-1}\bigr)(x^\nu_i)$ holds.
\end{lemma}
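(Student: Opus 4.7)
The plan is to case-split on whether $\lambda_i \in (T_s)_{\pi(\alpha)}$, which matches the two branches of definition~\eqref{eq:def_xalpharho_alpha_in_V1} of $x_{\alpha\lambda_i}$. In both cases the key identity is the factorization $(\psi_\alpha^\nu)^{-1} = \psi_{\alpha\beta}^\nu \circ (\psi_\beta^\nu)^{-1}$ for a well-chosen $\beta \in V_\comp(T_b)$, which reduces the problem to a combination of the (restriction) axiom, the conclusions of Lemma~\ref{lem:2M_gromov_conseq}, and the 1D analog of (special point) for the sequence $(T_s^\nu,\bx^\nu)$, whose Gromov convergence to $(T_s,\bx_\rho)$ is part of Def.~\ref{def:2M_gromov_converge}.

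Case 1: $\lambda_i \in (T_s)_{\pi(\alpha)}$, so $x_{\alpha\lambda_i} = x_{\alpha,1}$. If $n_i \geq 1$, coherence of $\pi$ produces a marked point vertex $\mu_{ij}^{T_b}$ in the subtree of $T_b$ below $\alpha$; since $f_b(\alpha)$ is a component vertex while $f_b(\mu_{ij}^{T_b})$ is a marked point vertex, Special Point' of Lemma~\ref{lem:2M_gromov_conseq} gives
\begin{align}
z_{\alpha\mu_{ij}^{T_b}} = \lim_{\nu\to\infty}(\psi_\alpha^\nu)^{-1}(x^\nu_i, y^\nu_{ij}).
\end{align}
Because the $\alpha$-to-$\mu_{ij}^{T_b}$ path begins at $\alpha$'s unique seam $\beta_1$, one has $z_{\alpha\mu_{ij}^{T_b}} = z_{\alpha,1,j'}$ for some $j'$, whose $x$-coordinate is $x_{\alpha,1}$; projecting onto the first factor closes this subcase. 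When $n_i = 0$, I would instead locate a descendant $\alpha' \in V_\comp^{\geq 2}(T_b)$ of $\alpha$ with $\lambda_i \in (T_s)_{\pi(\alpha')}$, use (restriction) in the form $p((\psi_{\alpha'}^\nu)^{-1}) = (\phi_{\pi(\alpha')}^\nu)^{-1}$ to rewrite
\begin{align}
p\bigl((\psi_\alpha^\nu)^{-1}\bigr)(x^\nu_i) = p(\psi_{\alpha\alpha'}^\nu)\bigl((\phi_{\pi(\alpha')}^\nu)^{-1}(x^\nu_i)\bigr),
\end{align}
and note that $(\phi_{\pi(\alpha')}^\nu)^{-1}(x^\nu_i) \to x_{\pi(\alpha')\lambda_i} \in \bR$ by 1D (special point), while Rescaling' in Lemma~\ref{lem:2M_gromov_conseq} gives $\psi_{\alpha\alpha'}^\nu \to z_{\alpha\alpha'} = z_{\alpha,1,j'}$ u.c.s.\ away from $z_{\alpha'\alpha} = \infty$, so $p(\psi_{\alpha\alpha'}^\nu)$ converges to the constant $x_{\alpha,1}$ on compact subsets of $\bR$; composition yields the claim.

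Case 2: $\lambda_i \notin (T_s)_{\pi(\alpha)}$, so $x_{\alpha\lambda_i} = \infty$. I would pick an ancestor $\gamma \in V_\comp^{\geq 2}(T_b)$ of $\alpha$ chosen deep enough that $x_{\pi(\gamma)\lambda_i} \neq x_{\gamma\alpha}$, i.e., so that the children of $\pi(\gamma)$ in $T_s$ separate $\lambda_i$ from the downward path to $\pi(\alpha)$. (For $r \geq 2$ such ancestors exist, with $\alpha_\root^{T_b} \in V_\comp^{\geq 2}$; the case $r = 1$ is trivial since $\ol{2\cM}_{(n_1)}$ is a point.) The analogous factorization gives
\begin{align}
p\bigl((\psi_\alpha^\nu)^{-1}\bigr)(x^\nu_i) = p(\psi_{\alpha\gamma}^\nu)\bigl((\phi_{\pi(\gamma)}^\nu)^{-1}(x^\nu_i)\bigr);
\end{align}
the inner term tends to $x_{\pi(\gamma)\lambda_i}$, which by construction differs from the singular value $x_{\gamma\alpha}$ of $p(\psi_{\alpha\gamma}^\nu)$, and Rescaling' delivers u.c.s.\ convergence to $\infty$ at that point.

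The main obstacle will be establishing the existence of the auxiliary vertices $\alpha'$ (for Case 1 with $n_i = 0$) and $\gamma$ (for Case 2, with the required separation $x_{\pi(\gamma)\lambda_i} \neq x_{\gamma\alpha}$). In both situations the argument descends to a careful structural analysis of the coherence conditions imposed on $\pi\colon T_b \to T_s$ in \S3 of~\cite{b:2ass}, in particular the interplay between $V_\comp^1$ vertices and the branching of the seam tree $T_s$.
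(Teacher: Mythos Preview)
Your overall strategy---factor through an auxiliary vertex in $V_\comp^{\geq 2}(T_b)$, apply {\sc(restriction)} to pass to the seam-tree convergence, and combine with {\sc(rescaling')}/{\sc(special point')}---is exactly the paper's approach. The paper also isolates the dilation fact (if $\beta$ is closer to the root than $\alpha$ and $(\psi_\alpha^\nu)^{-1}\circ\psi_\beta^\nu = a^\nu z + b^\nu$, then $a^\nu\to\infty$) as a separate Step~1, but this is implicit in your use of u.c.s.\ convergence of $p(\psi_{\alpha\gamma}^\nu)$.

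There is, however, a genuine error in your Case~1 with $n_i\geq 1$. The claim that coherence of $\pi$ forces some $\mu_{ij}^{T_b}$ to lie in $(T_b)_\alpha$ is false. Here is the shape of a counterexample: take $\alpha\in V_\comp^1$ with $\pi(\alpha)=\sigma$ an interior vertex of $T_s$, and let $\alpha$ have a \emph{sibling} $\alpha'\in V_\comp^{\geq 2}$ (both children of the same seam vertex, both with $\pi$-image $\sigma$). One can arrange that the unique $\mu_{i1}$ lies below $\alpha'$ rather than below $\alpha$, while $\lambda_i\in(T_s)_\sigma$. Coherence only guarantees that $(T_b)_\alpha$ surjects onto $(T_s)_{\pi(\alpha)}$, and when $n_i\geq 1$ the preimage of $\lambda_i$ inside $(T_b)_\alpha$ may consist solely of childless seam vertices. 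The fix is simple: drop the $n_i\geq 1$ shortcut and use your descendant-$\alpha'$ argument uniformly in Case~1. This works whenever $\pi(\alpha)\in V_\inte(T_s)$; the remaining situation $\pi(\alpha)=\lambda_i$ forces every marked point below $\alpha$ to lie on seam $i$, and stability then supplies one. This is precisely how the paper organizes things: its Step~2 assumes $\alpha$ has \emph{both} an ancestor and a descendant in $V_\comp^{\geq 2}$, and its Step~3 treats the residual cases $\pi(\alpha)\in\{\rho_\root^{T_s}\}\cup(\lambda_i)_i$ separately.

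Two further slips, both harmless for the actual argument: the assertion that $\alpha_\root^{T_b}\in V_\comp^{\geq 2}$ whenever $r\geq 2$ is false (the root can be a one-seam sphere with two nodal points, each leading to an $r$-seam bubble); what actually rescues Case~2 is that $\lambda_i\notin(T_s)_{\pi(\alpha)}$ forces $\pi(\alpha)\neq\rho_\root$, and since $\pi(\alpha_\root)=\rho_\root$, some ancestor of $\alpha$ must lie in $V_\comp^{\geq 2}$. And $\ol{2\cM}_{(n_1)}$ is not a point for $n_1\geq 3$; but Case~2 is vacuous for $r=1$ anyway, so this does not matter.
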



\begin{proof}
\noindent {\sc Step 1:} {\it If $\beta \in V_\comp(T_b)$ is closer to the root than $\alpha \in V_\comp(T_b)$, and we denote $((\psi_\alpha^\nu)^{-1} \circ \psi_\beta^\nu)(z) \eqqcolon a^\nu z + b^\nu$, then $\lim_{\nu\to\infty} a^\nu = \infty$.}

\medskip

\noindent By ({\sc rescaling'}), $(\psi_\alpha^\nu)^{-1}\circ\psi_\beta^\nu$ converges to $\infty$ u.c.s.\ away from $z_{\beta\alpha} \in \bR^2$.
The equality $\lim_{\nu\to\infty} a^\nu = \infty$ follows.

\medskip

\noindent {\sc Step 2:} {\it We prove the claim in the case that $\alpha$ is further from the root from a vertex $\beta \in V_\comp^{\geq 2}(T_b)$ and closer to the root than a vertex $\gamma \in V_\comp^{\geq 2}(T_b)$.}

\medskip


\noindent
First, suppose that $\lambda_i$ does not lie in $(T_s)_{\pi(\alpha)}$, and choose $\beta$ to be the closest vertex to $\alpha$ having the property just mentioned.
The stability of $2T$ implies that some $\mu_{i'j} \in V_\mk(T_b)$ lies in $(T_b)_\alpha$.
{(\sc Special point')} now yields the equality $\lim_{\nu\to\infty} p\bigl((\psi_\alpha^\nu)^{-1}(z_{i'j}^\nu)\bigr) = x_{\alpha,1} \in \bR$, hence
\begin{align}
\lim_{\nu\to\infty} p\bigl((\psi_\alpha^\nu)^{-1}\bigr)(x_{i'}^\nu) = x_{\alpha,1} \in \bR.
\end{align}
By our choice of $i'$ and $\beta$, $x_{\pi(\beta)\lambda_i}$ and $x_{\pi(\beta)\lambda_{i'}}$ are distinct elements of $\bR\cup\{\infty\}$, hence by {\sc (restriction)} and {\sc (special point')} we have
\begin{align}
\lim_{\nu\to\infty} p\bigl((\psi_\beta^\nu)^{-1}\bigr)(x_i^\nu)
=
x_{\pi(\beta)\lambda_i}
\neq
x_{\pi(\beta)\lambda_{i'}}
=
\lim_{\nu\to\infty} p\bigl((\psi_\beta^\nu)^{-1}\bigr)(x_{i'}^\nu).
\end{align}
Step 1, along with the last two displayed (in)equalities, yields $\lim_{\nu\to\infty} p\bigl((\psi_\alpha^\nu)^{-1}\bigr)(x_i^\nu) = \infty$,
which, by \eqref{eq:def_xalpharho_alpha_in_V1}, is equal to $x_{\alpha\lambda_i}$.

A similar argument (using $\gamma$ in place of $\beta$) proves that if $\lambda_i$ lies in $(T_s)_{\pi(\alpha)}$, then $p\bigl((\psi_\alpha^\nu)^{-1}\bigr)(x_i^\nu)$ converges to $x_{\alpha,1}$.


\medskip

\noindent {\sc Step 3:} {\it We prove the claim when $\alpha$ does not satisfy the hypothesis of Step 2.}

\medskip

\noindent In this case, $\pi(\alpha)$ must lie in $(\lambda_i)_i \cup \{\rho_\root^{T_s}\}$.
Suppose $\pi(\alpha) = \rho_\root^{T_s}$.
If $r = 1$, the claim clearly holds.
Otherwise, choose $\gamma$ to be the element of $V_\comp^{\geq 2}(T_b)$ closest to $\alpha$.
For every $i'$, {\sc(restriction)} and {\sc(special point')} yield the containment
\begin{align}
\lim_{\nu\to\infty} p\bigl((\psi_\gamma^\nu)^{-1}\bigr)(x_{i'}^\nu) = x_{\gamma\lambda_{i'}} \in \bR.
\end{align}
By \textsc{(special point')} and the stability condition for tree-pairs,
there exist $i'',j$ such that the equality $\lim_{\nu\to\infty} p\bigl((\psi_\alpha^\nu)^{-1}(z_{i''j})\bigr) = x_{\alpha,1}$ holds, so Step 1 and the last displayed containment imply the claim.
Indeed, write $\bigl((\psi_\alpha^\nu)^{-1}\circ\psi_\gamma^\nu\bigr)(z) = a^\nu z + b^\nu$; by Step 1, $\lim_{\nu\to\infty} a^\nu = 0$.
Now, for any $i'$, we have
\begin{align}
p\bigl((\psi_\alpha^\nu)^{-1}(z_{i'j})\bigr)
-
p\bigl((\psi_\alpha^\nu)^{-1}(z_{i''j})\bigr)
=
a^\nu\Bigl(p\bigl((\psi_\gamma^\nu)^{-1}(z_{i'j})\bigr)
-
p\bigl((\psi_\gamma^\nu)^{-1}(z_{i''j})\bigr)\Bigr)
\sr{\nu\to\infty}{\lra} 0,
\end{align}
hence $\lim_{\nu\to\infty} p\bigl((\psi_\alpha^\nu)^{-1}(z_{i'j})\bigr) = x_{\alpha,1}$.

A similar argument can be made in the case that $\pi(\alpha)$ is a leaf of $T_s$.
\end{proof}

\subsection{Gromov compactness for \texorpdfstring{$\ol{2\cM}_\bn$}{2Mn}}
\label{ss:compact}

This subsection is devoted to establishing the following result, which will later be used to show that the topology on $\ol{2\cM}_\bn$ is compact.

\begin{theorem} \label{thm:2M_gromov}
Any sequence $\bigl(2T^\nu, (\bx_\rho^\nu), (\bz_\alpha^\nu)\bigr) \subset \SWC_\bn$ has a Gromov-convergent subsequence.
\end{theorem}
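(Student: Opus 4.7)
The plan is to follow a three-stage scheme modeled on the Gromov compactness proof for stable curves in \S D of \cite{ms:jh}: (i) reduce to a fixed combinatorial type, (ii) bootstrap from the known compactness of $\ol\cM_r$ to handle the seam tree, and (iii) build the bubble tree by iterated soft rescaling. Since there are only finitely many stable tree-pairs of type $\bn$, I would first pass to a subsequence on which $2T^\nu \equiv \wt{2T}$ is constant. Applying the analogous Gromov compactness theorem for $\ol\cM_r$ to the sequence of stable disk trees $\bigl(\wt T_s, (\bx_\rho^\nu)\bigr)$ and extracting a further subsequence, these Gromov-converge to some $\bigl(T_s, (\bx_\rho)\bigr)$ via an RRT surjection $f_s\colon T_s \to \wt T_s$ and reparametrizations $(\phi_\rho^\nu) \subset G_1$. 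This already pins down the $x$-component of $\psi_{\wt\alpha}^\nu$ for each $\wt\alpha \in V_\comp^{\geq 2}(\wt T_b)$ by the {\sc (restriction)} axiom, leaving only the $y$-translation free; for $\wt\alpha \in V_\comp^1(\wt T_b)$ the full group $G_2$ remains available.

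Next, for each $\wt\alpha \in V_\comp(\wt T_b)$ I would run a soft-rescaling procedure on $\bz_{\wt\alpha}^\nu$, analogous to the example in \S\ref{ss:coda}. Fix the remaining gauge freedom by placing a preferred pair of special points at standardized locations, then pass to a subsequence so that away from a finite set of collision clusters the configuration converges in $\bR^2 \cup \{\infty\}$; at each cluster of colliding points, rescale by the inverse of the smallest dilation that pulls the cluster apart to unit scale, producing a new component vertex $\alpha'$ further from the root of $T_b$, and recurse on the rescaled configurations. This procedure terminates because each rescaling strictly decreases a natural complexity measure — for instance, the sum over current leaf components of the number of pairs of coincident special points — while producing only stable bubbles, since by construction every new component carries at least two distinct special points. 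The vertices of $T_b$ are those produced by this iteration, the coherence map $\pi$ is forced by requiring that each bubble over a $V_\comp^{\geq 2}$-parent attach along the seam carrying its limit point, and seam and marked-point vertices are inserted in the evident way. The axioms {\sc (restriction)}, {\sc (rescaling)}, and {\sc (special point)} then follow directly from how the rescalings were chosen.

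The main obstacle I expect is the interaction between the seam-tree rescaling and the bubble-tree rescaling at $V_\comp^{\geq 2}$-vertices: the $x$-component of $\psi_{\wt\alpha}^\nu$ is rigidly prescribed by $\phi_{\pi(\wt\alpha)}^\nu$, so collisions between distinct seams must be resolved at the seam-tree level, while collisions of marked points sharing a single seam must be resolved by an independent $y$-rescaling. Keeping these two schemes coherent — in particular, ensuring that a bubble attached at a $V_\comp^1$-vertex whose ancestors include $V_\comp^{\geq 2}$-vertices continues to respect the constraint $p(\psi_\alpha) = \phi_{\pi(\alpha)}$ all the way up the tree — is the delicate point, and Lemma~\ref{lem:V1_x-limits} is designed precisely to control the $x$-limits in exactly this situation. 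A careful bookkeeping of the order in which the rescalings are performed, working outward from the root of $T_b$ and reusing the seam-tree reparametrizations $(\phi_\rho^\nu)$ wherever {\sc (restriction)} demands it, should suffice to close the argument.
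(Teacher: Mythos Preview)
Your proposal is a reasonable outline, but it takes a genuinely different route from the paper's proof, and the part you flag as ``the main obstacle'' is precisely where the paper's argument is cleaner.

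The paper does \emph{not} first converge the seam tree and then run a recursive soft-rescaling procedure component by component. Instead, for smooth sequences it argues by induction on $|\bn|$: remove one marked point (chosen to be the highest), converge the remaining $|\bn|-1$-point configuration by the inductive hypothesis, and then reinsert the deleted point using Lemma~\ref{lem:2M_add_pt}. That lemma gives an exhaustive four-way dichotomy for where the new point can land relative to the existing limit tree --- on an existing component away from special points, on an existing marked point, at $\infty$ on the root, or at a node between two components --- and in each case the proof explicitly constructs the new tree-pair and the new reparametrization, checking the axioms directly. The base cases are $\bn=(2)$ (trivial) and $|\bn|=1$ with $r\geq 2$, the latter handled by an explicit bijection $\SDT_r \cong \SWC_\bn$ that reduces to compactness of $\ol\cM_r$. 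The general (non-smooth) case is not written out but follows by applying the smooth case to the component associated to each $\wt\alpha\in V_\comp(\wt T_b)$.

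What the paper's approach buys is exactly the avoidance of the coherence bookkeeping you are worried about. Because only one point is added at a time, there is no need to manage a recursive tree of rescalings or to prove a termination/complexity-decrease statement; the interaction between seam rescalings and bubble rescalings is confined to the single new point, and the four cases of Lemma~\ref{lem:2M_add_pt} handle it uniformly. In particular, stability of the resulting tree-pair is verified case by case rather than appealing to ``every new component carries at least two distinct special points'' --- note that two distinct special points is \emph{not} the stability condition for tree-pairs (a component with $\geq 2$ seams still needs at least one marked/nodal point on some seam), so your stability claim as stated would need more care. Your recursive scheme can almost certainly be made to work, but the paper's induction-plus-add-point argument is tighter and sidesteps the delicate point you identified.
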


\noindent The central idea of the proof already occurs when the witch curves in the sequence are smooth.
In this case, we prove this theorem inductively, on the total number of marked points.
The idea is that when we add a new marked point to a Gromov-convergent sequence of smooth witch curves, there are four possibilities, illustrated in the following figure and made formal in Lemma~\ref{lem:2M_add_pt}.

\begin{figure}[H]
\centering
\def\svgwidth{0.95\columnwidth}
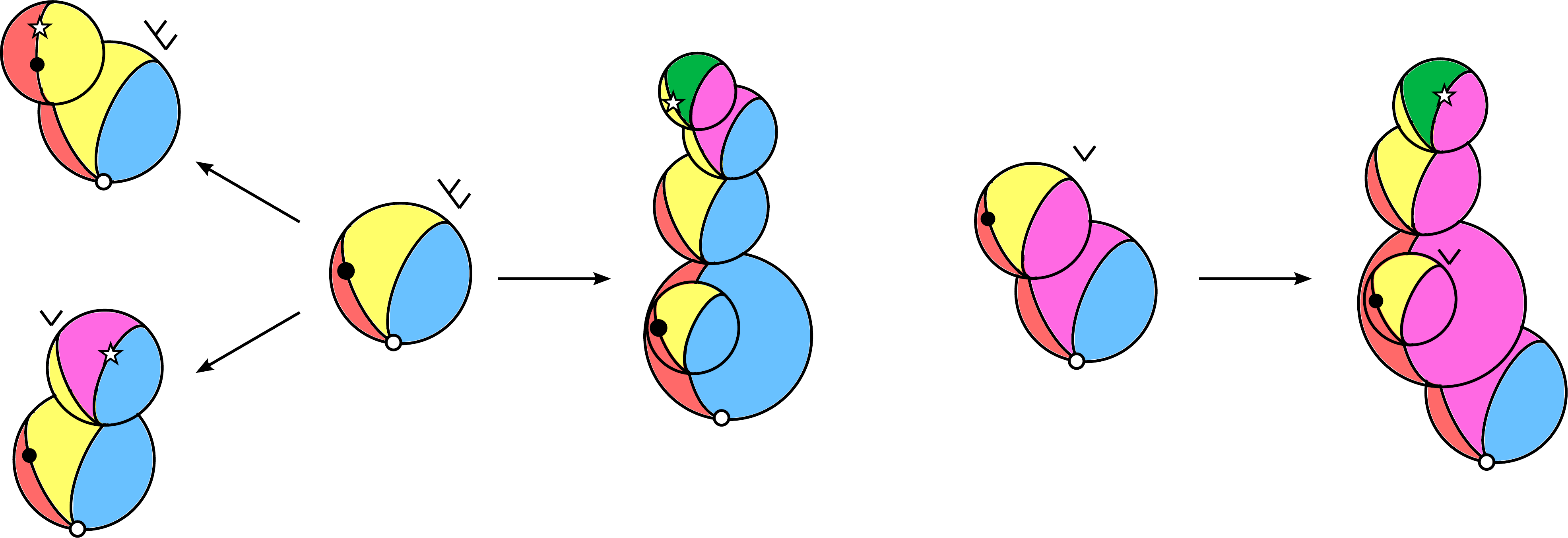
\caption{The sources of the arrows are two points in
$\SWC_{1000}$,
which we think of as limits of sequences of smooth witch curves.
The targets show examples of what the limit can become when we add an additional marked point to the original sequence of smooth witch curves.
Some of the seams in this figure are the results of several seams merging; these seams are indicated by small adjacent trees.
\label{fig:add_pt}}
\end{figure}

\begin{lemma} \label{lem:2M_add_pt}
Suppose that a sequence $(\bx^\nu, \bz^\nu) \subset \SWC_\bn$ of smooth stable witch curves Gromov-converges to $\bigl(2T, (\bx_\rho), (\bz_\alpha)\bigr)$ via $(\phi_\rho^\nu)$ and $(\psi_\alpha^\nu)$, and that $(\zeta^\nu \in \bR^2 \setminus \bz^\nu)$ is a sequence with the property that
\begin{equation} \label{eq:new_pt_converges}
\zeta_\alpha \coloneqq \lim_{\nu\to\infty} (\psi_\alpha^\nu)^{-1}(\zeta^\nu) \in \bR^2\cup \{\infty\}
\end{equation}
exists for every $\alpha \in V_\comp(T_b)$.  Then exactly one of the following conditions holds:
\begin{itemize}
\item[(1)] There exists a (unique) vertex $\alpha \in V_\comp(T_b)$ such that $\zeta_\alpha \in \bR^2 \setminus Z_\alpha^\spec$.

\item[(2a)] There exists a (unique) contiguous pair $\alpha \in V_\comp(T_b)$, $\mu_{ij}$ such that $\zeta_\alpha = z_{\alpha\mu_{ij}}$.

\item[(2b)] The root $\alpha_\root$ has $\zeta_{\alpha_\root} = \infty$.

\item[(3)] There exists a (unique) contiguous pair $\alpha, \beta \in V_\comp(T_b)$ such that $\zeta_\alpha = z_{\alpha\beta}$ and $\zeta_\beta = z_{\beta\alpha}$.
\end{itemize}
\end{lemma}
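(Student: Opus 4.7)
The plan is to set up a recursive descent through the bubble tree $T_b$ starting at the root component $\alpha_\root^{T_b}$, and at each stage classify the value $\zeta_\alpha$ according to its position in $\bR^2 \cup \{\infty\}$. The central technical tool I would extract from \textsc{(rescaling')} of Lem.~\ref{lem:2M_gromov_conseq} is a propagation principle: for a contiguous pair $\alpha,\beta \in V_\comp(T_b)$, if $\zeta_\alpha \neq z_{\alpha\beta}$ then $\zeta_\beta = z_{\beta\alpha}$. Indeed, $(\psi_\beta^\nu)^{-1}\circ\psi_\alpha^\nu$ converges u.c.s.\ away from $z_{\alpha\beta}$ to the constant $z_{\beta\alpha}$, so this follows immediately from the definition of $\zeta_\beta$. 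Chaining along paths in $T_b$ yields: if $\zeta_\alpha \in \bR^2 \setminus Z_\alpha^\spec$, then $\zeta_{\alpha'} = z_{\alpha'\alpha}$ for every ancestor $\alpha'$ of $\alpha$ and $\zeta_{\alpha'} = \infty$ for every $\alpha'$ not on the path from $\alpha_\root$ to $\alpha$.

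For existence, I would begin at $\alpha = \alpha_\root$ and branch on $\zeta_\alpha$. If $\zeta_{\alpha_\root} = \infty$, case~(2b); if $\zeta_{\alpha_\root} \in \bR^2 \setminus Z_{\alpha_\root}^\spec$, case~(1); if $\zeta_{\alpha_\root} = z_{\alpha_\root,\mu_{ij}}$ for some contiguous marked point $\mu_{ij}$, case~(2a); otherwise $\zeta_{\alpha_\root} = z_{\alpha_\root\beta}$ for a unique contiguous component $\beta$, and I would replace $\alpha$ by $\beta$ and recurse. At any subsequent stage with $\alpha \neq \alpha_\root$ having parent $\tilde\alpha$, the extra possibility $\zeta_\alpha = \infty = z_{\alpha\tilde\alpha}$ yields case~(3) with the pair $(\tilde\alpha,\alpha)$, because the hypothesis $\zeta_{\tilde\alpha} = z_{\tilde\alpha\alpha}$ is exactly what brought us down to $\alpha$. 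Since $T_b$ is finite, the descent terminates and produces at least one of the four cases.

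For uniqueness, the propagation principle gives mutual exclusivity and within-case rigidity. If case~(1) holds at $\alpha$, then every other $\zeta_{\alpha'}$ is pinned to $z_{\alpha'\alpha}$ or $\infty$, neither of which is a generic point of $\bR^2 \setminus Z_{\alpha'}^\spec$ nor a marked-point location $z_{\alpha'\mu_{i'j'}}$ (marked-point locations and nodal-direction locations are distinct special points of a component), ruling out any further instance of (1), (2a), or (3); moreover $\zeta_{\alpha_\root}$ lies in $\bR^2$, excluding (2b). The remaining pairwise exclusions and the uniqueness of the anchor data $(\alpha,\mu_{ij})$ in (2a) and $(\alpha,\beta)$ in (3) follow from the same mechanism: the value of $\zeta$ at one component uniquely determines its values everywhere else, so the data admits at most one classification.

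The main obstacle I anticipate is case~(3), since $\zeta_\alpha = z_{\alpha\beta}$ is exactly the configuration where the propagation principle is silent: $(\psi_\beta^\nu)^{-1}\circ\psi_\alpha^\nu$ is blowing up a neighborhood of $z_{\alpha\beta}$, and the resulting value of $\zeta_\beta$ depends on the precise rate at which $\zeta^\nu$ approaches the nodal seam in $\alpha$'s frame. To confirm that the descent correctly terminates in case~(3) whenever it applies, I would write $(\psi_\beta^\nu)^{-1}\circ\psi_\alpha^\nu$ as an affine map $z \mapsto a^\nu z + b^\nu$ with $|a^\nu|\to\infty$, and verify directly that $\zeta_\beta$ equals $\infty$ precisely when $\zeta^\nu$ approaches $z_{\alpha\beta}$ at the critical rate, in which case $\zeta_\beta = \infty = z_{\beta\alpha}$ and we are genuinely in case~(3); otherwise $\zeta_\beta$ lands in $\bR^2$ and the descent proceeds further. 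Once this rate analysis is pinned down, uniqueness of $(\alpha,\beta)$ in case~(3) is automatic, since the descent from $\alpha_\root$ selects $\alpha$ unambiguously and the nodal direction $\zeta_\alpha = z_{\alpha\beta}$ then selects $\beta$.
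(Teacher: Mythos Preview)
Your argument is correct and shares the paper's core tool: the propagation principle $\zeta_\alpha \neq z_{\alpha\beta} \Rightarrow \zeta_\beta = z_{\beta\alpha}$ (this is exactly the paper's Step~1, deduced from \textsc{(rescaling')}). The existence argument, however, is organised differently. The paper assumes (1), (2a), (2b) all fail, so that every $\alpha \in V_\comp(T_b)$ has a unique contiguous $\beta \in V_\comp(T_b)$ with $\zeta_\alpha = z_{\alpha\beta}$; it then follows these pointers to build an infinite walk $\alpha_1,\alpha_2,\ldots$ in the finite tree and concludes that some $\alpha_{k+2}=\alpha_k$, which yields~(3). Your deterministic descent from $\alpha_\root$ reaches the same conclusion by always moving strictly away from the root, so termination is immediate and no self-intersection argument is needed; the two approaches are equally short, yours slightly more constructive. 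Two remarks. First, your final paragraph on rate analysis is unnecessary: the lemma already hypothesises that each $\zeta_\beta$ exists in $\bR^2\cup\{\infty\}$, so at a non-root vertex you simply test whether $\zeta_\beta=\infty$ and either declare case~(3) or continue --- no affine-map computation is required. Second, your uniqueness claim for~(3) (``the descent selects $\alpha$ unambiguously'') only shows that the descent has a unique output; to exclude a second contiguous pair $(\alpha',\beta')$ off the descent path you still need to invoke propagation from the pair you found, exactly as the paper does explicitly in its Step~2.
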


\begin{proof}
We imitate the proof of Lemma~5.3.4, \cite{ms:jh}.

\medskip
\noindent{\sc Step 1:} {\it We prove the implication
\begin{align} \label{eq:add_pt_helper}
\alpha,\beta \in V_\comp(T_b), \: \zeta_\alpha \neq z_{\alpha\beta} \implies \zeta_\beta = z_{\beta\alpha}.
\end{align}}

\noindent This follows from the {\sc (rescaling')} part of Lemma~\ref{lem:2M_gromov_conseq} and the convergence of $(\psi_\alpha^\nu)^{-1}(\zeta^\nu)$ to $\zeta_\alpha\neq z_{\alpha\beta}$:
\begin{equation}
\zeta_\beta = \lim_{\nu\to\infty} (\psi_\beta^\nu)^{-1}(\zeta^\nu) = \lim_{\nu\to\infty} \psi_{\beta\alpha}^\nu\bigl((\psi_\alpha^\nu)^{-1}(\zeta^\nu)\bigr) = z_{\beta\alpha}.
\end{equation}

\noindent {\sc Step 2:} {\it We prove the lemma.}
\medskip

\noindent We begin by proving that the four cases are mutually exclusive.
\begin{itemize}
\item Suppose that $\alpha, \beta$ satisfy the condition in (3), and fix $\gamma \in V_\comp(T_b) \setminus \{\alpha,\beta\}$.
If $\gamma$ lies in $(T_b)_{\alpha\beta}$, then the inequality $\zeta_\beta = z_{\beta\alpha} \neq z_{\beta\gamma}$ and Step 1 imply $\zeta_\gamma = z_{\gamma\beta}$, so none of (1), (2a), and (2b) hold.
Otherwise, the inequality $\zeta_\alpha = z_{\alpha\beta}\neq z_{\alpha\gamma}$ and Step 1 imply $\zeta_\gamma = z_{\gamma\alpha}$, so none of (1), (2a), and (2b) hold.

\item Suppose that (2b) holds.
Step 1 implies that every $\alpha \in V_\comp(T_b)$ has $\zeta_\alpha = \infty$, so neither (1) nor (2a) holds.

\item Suppose that $\alpha, \mu_{ij}$ satisfy (2a).
Step 1 implies that every $\beta \in V_\comp(T_b) \setminus \{\alpha\}$ has $\zeta_\beta = z_{\beta\alpha}$, so (1) does not hold.
\end{itemize}

Next, we prove uniqueness in (1), (2a), and (3).
In (1) and (2a), this is an immediate consequence of Step 1.
To prove uniqueness in (3), suppose for a contradiction that $\{\alpha,\beta\}$ and $\{\alpha',\beta'\}$ are distinct pairs satisfying (3).
Switching $\alpha$ and $\beta$ if necessary, we may assume that the paths from $\alpha'$ resp.\ $\beta'$ to $\alpha$ pass through $\beta$.
Similarly, we may assume that the paths from $\alpha$ resp.\ $\beta$ to $\beta'$ pass through $\alpha'$.
The inequality $\zeta_\beta = z_{\beta\alpha} \neq z_{\beta\alpha'}$ and Step 1 imply $\zeta_{\alpha'} = z_{\alpha'\beta}$.
This, together with the inequality $z_{\alpha'\beta} \neq z_{\alpha'\beta'}$, imply $\zeta_{\alpha'} = z_{\alpha'\beta} \neq z_{\alpha'\beta'}$, in contradiction with the assumption.

Finally, we show that at least one of these cases holds.
Suppose that (1), (2a), and (2b) do not hold; we must show that (3) holds.
The assumption implies that for every $\alpha \in V_\comp(T_b)$, there exists a
(unique)
contiguous $\beta \in V_\comp(T_b)$ with $\zeta_\alpha = z_{\alpha\beta}$.
Define a
(possibly self-crossing)
path like so: First, choose any $\alpha_1 \in V_\comp(T_b)$ and a contiguous vertex $\alpha_2 \in V_\comp(T_b)$ with $\zeta_{\alpha_1} = z_{\alpha_1\alpha_2}$.
Inductively continue this path by defining $\alpha_{k+1} \in V_\comp(T_b)$ to be the vertex in $V_\comp(T_b)$ contiguous to $\alpha_k$ satisfying $\zeta_{\alpha_k} = z_{\alpha_k\alpha_{k+1}}$.
The quotient of $T_b$ obtained by identifying each element of $V_\comp(T_b)$ with its incoming neighbors is again a tree, and $(\alpha_1,\alpha_2,\ldots)$ is an infinite path in this quotient, so there must exist $k$ such that $\alpha_{k+2} = \alpha_k$.
Then $\zeta_{\alpha_k} = z_{\alpha_k\alpha_{k+1}}$ and $\zeta_{\alpha_{k+1}} = z_{\alpha_{k+1}\alpha_k}$, so $\alpha_k,\alpha_{k+1}$ satisfy the condition in (3).
\end{proof}

\begin{proof}[Proof of Thm.~\ref{thm:2M_gromov}]
\noindent {\sc Step 1:} {\it For any $r\geq 2$ and $\bn \in \bZ_{\geq0}^r$ with $|\bn| = 1$, there is a bijection $\SDT_r \to \SWC_\bn$
that
identifies Gromov-convergent sequences with Gromov-convergent sequences.}
\medskip

Fix $\bn$ as above, where the only nonzero entry is $n_{i_0} = 1$.
We begin by identifying stable RRTs with $r$ leaves with stable tree-pairs of type $\bn$.
\begin{itemize}
\item Given a stable RRT $T$ with $r$ leaves, we define a stable tree-pair $2T$ of type $\bn$ like so: set $T_s \coloneqq T$.
Define $T_b$ by first setting $T'$ to consist of all vertices in the path $[\rho_\root^T,\lambda_{i_0}[$ and all incoming neighbors of these vertices; now, define $T_b$ to be the result of inserting a dashed edge at $\lambda_{i_0}$, and at every interior vertex of $T'$ except the root.
Here is an illustration of this process, in which a stable RRT with 5 leaves is sent to a
stable
tree-pair of type $(0,0,0,1,0)$:

\vspace{-0.5em}
\begin{figure}[H]
\centering
\def\svgwidth{0.4\columnwidth}
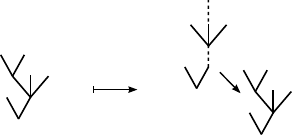
\end{figure}

\item Given a stable tree-pair $2T$ of type $\bn$, send it to its seam tree $T_s$.
\end{itemize}

\noindent The fact that these maps are inverses follows from the stability condition on stable tree-pairs.

We now enhance this bijection to an identification of $\SDT_r$ with $\SWC_\bn$.
Fix $\bigl(T, (\bx_\rho)\bigr) \in \SDT_r$.
Define $2T$ as above.
Define $\bigl(2T, (\bx_\rho), (\bz_\alpha) \bigr) \in \SWC_\bn$ like so: for $\alpha \in V_\comp(T_b)$, choose $i_0$ with the property that the $i_0$-th incoming neighbor $\beta$ of $\alpha$ has $\incom(\beta) \neq \emptyset$, and set $\bz_\alpha \coloneqq \bigl((x_{\pi(\alpha),i_0},0)\bigr)$.
It is straightforward to check that this indeed defines a bijection, and that it identifies Gromov-convergent sequences in $\SDT_r$ with Gromov-convergent sequences in $\SWC_\bn$.

\medskip
\noindent {\sc Step 2:} {\it If $(\bx^\nu,\bz^\nu) \subset \SWC_\bn$ is a sequence of smooth stable witch curves, then it has a Gromov-convergent subsequence.}
\medskip

We establish this claim by induction on $|\bn|$.
The base case $\bn = (2)$ follows from the fact that any two elements of $\SWC_{(2)}$ are isomorphic, while the base case $r\geq 2$, $|\bn| = 1$ follows from Step 1.

Next, say that the claim has been proven up to,
but not including,
some $|\bn| = a \geq 1$.
(In the $r=1$ case, start with $a \geq 2$ instead.)
Fix a sequence $(\bx^\nu,\bz^\nu) \in \SWC_\bn$, with $|\bn| = a$.
Without loss of generality, we may choose $i_0$ such that the inequality $y_{i_0,n_{i_0}} \geq y_{ij}$ holds for all $i,j$.
Define $\wt\bn \coloneqq (n_1,\ldots,n_{i_0-1},n_{i_0}-1,n_{i_0+1},\ldots,n_r)$ and $\wt\bz^\nu \coloneqq (z_{ij}^\nu \:|\: (i,j) \neq (i_0,n_{i_0}))$.
By the inductive hypothesis, we may assume that $(\bx^\nu,\wt\bz^\nu) \subset \SWC_{\wt\bn}$ Gromov-converges to some $\bigl(2T, (\bx_\rho), (\bz_\alpha)\bigr)$.
Set $\zeta^\nu \coloneqq z_{i_0,n_{i_0}}$.
Passing to a subsequence, we may assume that for every $\alpha \in V_\comp(T_b)$, the limit $\zeta_\alpha \coloneqq \lim_{\nu\to\infty} (\psi_\alpha^\nu)^{-1}(\zeta^\nu) \in \bR^2 \cup \{\infty\}$ exists.
(Indeed, enumerate $V_\comp(T_b)$ as $\alpha_1,\ldots,\alpha_n$.
Since $\bR^2 \cup \{\infty\}$ is compact, we may pass to a subsequence so that the limit defining $\zeta_{\alpha_1}$ exists.
Next, we pass to a further subsequence so that the limit defining $\zeta_{\alpha_2}$ exists, and so forth.)
We may now apply Lemma~\ref{lem:2M_add_pt}; we divide the rest of the proof of this step into cases, depending on which case of Lemma~\ref{lem:2M_add_pt} holds.
\begin{itemize}
\item[(1)] Fix $\alpha \in V_\comp(T_b)$ with the property $\zeta_\alpha \in \bR^2 \setminus Z_\alpha^\spec$.
We begin by defining a
stable
tree-pair $2T^\new$ of type $\bn$.
Set $T_s^\new \coloneqq T_s$.
Enlarge $T_b$ to $T_b^\new$ like so: define $T'$ to be the subtree of $T_s$ consisting of the path $]\pi(\alpha),\lambda_{i_0}[$, together with all incoming neighbors of all vertices in this path.
Insert a dashed edge at every interior vertex in $T'$, including at its root.
Add an incoming dashed edge to the vertex in $T'$ corresponding to $\lambda_{i_0}$.
Finally, graft this tree into $T_b$ by identifying its root with the vertex $\beta \in \incom(\alpha)$ with the property that the path from $\pi(\alpha)$ to $\lambda_{i_0}$ passes through $\pi(\beta)$; declare that the element we have just added to $\incom(\beta)$ is maximal in $\incom(\beta)$.

Next, we define a collection of reparametrizations $(\chi_{\beta_\rho}^\nu)$, where $\beta_\rho$ denotes the vertex we added to $T_b^\new$ corresponding to $\rho \in ]f(\alpha), \lambda_{i_0}[$.
We characterize $\chi_{\beta_\rho}^\nu$ by the following equations:
\begin{align}
p(\chi_{\beta_\rho}^\nu) = \phi_\rho^\nu, \qquad (\chi_{\beta_\rho}^\nu)^{-1}(x_{i_0}^\nu,y_{i_0,n_{i_0}}^\nu) = \bigl((\phi_\rho^\nu)^{-1}(x_{i_0}),0\bigr).
\end{align}
The sequence $(\bx^\nu,\bz^\nu)$ converges to $\bigl(2T^\new,(\bx_\rho),(\bz_\alpha)\bigr)$ via $(\phi_\rho^\nu)$ and $(\psi_\alpha^\nu) \cup (\chi_{\beta_\rho}^\nu)$.

\item[(2a)] Fix a contiguous pair $\alpha \in V_\comp(T_b), \mu_{ij}$ with $\zeta_\alpha = z_{\alpha\mu_{ij}}$.
We begin by defining $2T^\new$:
\begin{align}
T_s^\new \coloneqq T_s, \qquad T_b^\new \coloneqq T_b \cup \{\alpha', \mu_{i_0,n_{i_0}-1}^{T_b^\new}, \mu_{i_0,n_{i_0}}^{T_b^\new}\},
\end{align}
where $\alpha'$ is a new seam vertex attached via an outgoing solid edge to $\beta\coloneqq \mu_{i_0,n_{i_0}-1}^{T_b}$ (which is converted from an element of $(\mu_{ij})_{i,j}$ to an element of $V_\comp(T_b^\new)$), and where $\mu_{i_0,n_{i_0}-1}^{T_b^\new},\mu_{i_0,n_{i_0}}^{T_b^\new}$ are the incoming neighbors of $\alpha'$.

Next, we define $\chi_\beta^\nu$ by these equations:
\begin{align}
(\chi_\beta^\nu)^{-1}(z_{i_0,n_{i_0}-1}^\nu) = (0,0),
\qquad
(\chi_\beta^\nu)^{-1}(z_{i_0,n_{i_0}}^\nu) = (0,1).
\end{align}
Then $(\bx^\nu,\bz^\nu)$ converges to $\bigl(2T^\new,(\bx_\rho),(\bz_\alpha)\bigr)$ via $(\phi_\rho^\nu)$ and $(\psi_\alpha^\nu) \cup (\chi_\beta^\nu)$.

\item[(2b)] Suppose $\zeta_{\alpha_\root} = \infty$.
Set $T_s^\new \coloneqq T_s$, and define $T_b^\new$ like so: define $T'$ to be the subtree of $T_s$ consisting of $[\rho_\root^{T_s},\lambda_{i_0}[$ and all incoming neighbors of these vertices.
Insert dashed edges at all the interior vertices of $T'$ besides the root, and at the vertex of $T'$ corresponding to $\lambda_{i_0}$.
Complete the construction of $T_b^\new$ by introducing $\alpha' \eqqcolon \alpha_\root^{T_b^\new}$, connect $\alpha'$ by an incoming solid edge to a new seam vertex $\alpha''$, and connect both $\alpha_\root^{T_b}$ and the root of $T'$ to $\alpha''$ by dashed edges.

Next, we need to define reparametrizations $\chi_{\beta_\rho}^\nu, \chi_{\alpha'}^\nu$, where $\beta_\rho$ is the vertex in $T'$ corresponding to $\rho \in [\rho_\root^{T_s},\lambda_{i_0}^{T_s}[$.
The definition of $\chi_{\beta_\rho}^\nu$ is similar to the construction of the reparametrizations in (1).
To define $\chi_{\alpha'}^\nu$, choose $i_1 \in [1,r]$ and $j_1 \in [1,\wt n_{i_1}]$ and characterize $\chi_{\alpha'}^\nu$ by the equations
\begin{align}
(\chi_{\alpha'}^\nu)^{-1}(z_{i_0,n_{i_0}}^\nu) = (*,1), \qquad (\chi_{\alpha'}^\nu)^{-1}(z_{i_1j_1}) = (0,0).
\end{align}

\item[(3)] This case is similar to (2b), so we do not include all the details.
As illustrated in Fig.~\ref{fig:add_pt}, to obtain the new stable witch curve we introduce a new component between the spheres corresponding to $\alpha$ and $\beta$, and possibly attach a further bubble tree to this new component.
More precisely, we first enlarge $V_\comp(T_b)$ and $V_\seam(T_b)$ by adding a component vertex $\gamma$ to $V_\comp(T_b)$ between $\alpha$ and $\beta$, adding a seam vertex $\delta$ to $V_\seam(T_b)$, and setting $\incom(\gamma) \coloneqq (\delta)$.
To further enlarge $T_b$, set $T'$ to be the subtree of $T_s$ consisting of $]\pi(\alpha),\lambda_{i_0}[$ and all incoming neighbors of these vertices.
As in (2b), insert dashed edges at all the interior vertices of $T'$ besides the root, and at the vertex of $T'$ corresponding to $\lambda_{i_0}$.
Finally, attach the root of $T'$ to $\gamma$ via a dashed edge.

The construction of the reparametrizations corresponding to the new vertices is somewhat different than the constructions appearing in (2b); we therefore concentrate on this detail.
Specifically, we show how to construct the reparametrizations $\chi^\nu$ corresponding to $\gamma$.

Suppose that $\alpha,\beta\in V_\comp(T_b)$ are contiguous and have the properties $\zeta_\alpha = z_{\alpha\beta}$ and $\zeta_\beta = z_{\beta\alpha}$.
Assume w.l.o.g.\ that $\beta$ is further from the root than $\alpha$ is, which implies $z_{\beta\alpha} = \infty$.
We must construct a sequence $(\chi^\nu) \subset G_1$ satisfying these conditions:
\begin{itemize}
\item[($\chi1$)] $(\chi^\nu)^{-1}(z_{i_0,n_{i_0}}^\nu) = (0,1)$ for every $\nu$.

\item[($\chi2$)] $(\psi_\alpha^\nu)^{-1} \circ \chi^\nu$ converges to 0 u.c.s.\ away from $\infty$ and $(\psi_\beta^\nu)^{-1} \circ \chi^\nu$ converges to $\infty$ u.c.s.\ away from $z_{\alpha\beta}$.
\end{itemize}
To do so, we first note that we may assume $z_{\alpha\beta} = 0$: otherwise, set $\xi \in G_2$ to be translation by $z_{\alpha\beta}$ and replace $\psi_\alpha^\nu$ by $\psi_\alpha^\nu\circ \xi$ and\ $\bz_\alpha^\nu$  by\ $\xi^{-1}(\bz_\alpha^\nu)$.
The sequence $(\psi_{\alpha\beta}^\nu) = \bigl((\psi_\alpha^\nu)^{-1}\circ\psi_\beta^\nu\bigr)$ converges to 0 u.c.s.\ away from $\infty$, so if we write $\psi_{\alpha\beta}^\nu(z) = a^\nu z + b^\nu$, the sequences $(a^\nu) \subset \bR_{>0}$ and $(b^\nu) \subset \bR^2$ both converge to 0.
Set $w^\nu \coloneqq (\psi_\beta^\nu)^{-1}(z_{i_0,n_{i_0}}^\nu)$; we then have
\begin{equation} \label{eq:2M_compact_3_limits}
\lim_{\nu\to\infty} w^\nu = \zeta_\beta = z_{\beta\alpha} = \infty, \qquad \lim_{\nu\to\infty} a^\nu w^\nu = \lim_{\nu\to\infty} \psi_{\alpha\beta}^\nu(w^\nu) = \zeta_\alpha = z_{\alpha\beta} = 0.
\end{equation}
{\sc(Restriction)} and Lemma~\ref{lem:V1_x-limits} imply that $\Re(w^\nu)$ is bounded; just as we assumed $z_{\alpha\beta} = 0$, we may therefore assume $w^\nu = (0,c^\nu)$ for $c^\nu \in \bR$.
Moreover, the inequality $y_{i_0,n_{i_0}} \geq y_{i'j'}$ for all $i',j'$ implies that $c^\nu$ is eventually positive.
The functions $\xi^\nu$ defined by $\xi^\nu(z) \coloneqq (z - b^\nu)/(a^\nu c^\nu)$ therefore lie in $G_2$ and satisfy these conditions:
\begin{itemize}
\item[($\xi1$)] $\xi^\nu$ converges to $\infty$ u.c.s.\ away from 0.

\item[($\xi2$)] $\xi^\nu \circ \psi_{\alpha\beta}^\nu = (z \mapsto z/c^\nu)$ converges to 0 u.c.s.\ away from $\infty$.

\item[($\xi3$)] The following identity holds for every $\nu$:
\begin{equation}
\bigl(\xi^\nu\circ(\psi_\alpha^\nu)^{-1}\bigr)(z_{i_0,n_{i_0}}^\nu) = (\xi^\nu\circ \psi_{\alpha\beta}^\nu)\bigl((\psi_\beta^\nu)^{-1}(z_{i_0,n_{i_0}}^\nu)\bigr) = (\xi^\nu\circ\psi_{\alpha\beta}^\nu)(w^\nu) = (0,1).
\end{equation}
\end{itemize}

\noindent Now set
\begin{equation}
\chi^\nu \coloneqq \psi_\alpha^\nu \circ (\xi^\nu)^{-1}.	
\end{equation}
Then $(\xi3)$ implies $(\chi^\nu)^{-1}(z_{i_0,n_{i_0}}^\nu)  = (0,1)$, which establishes $(\chi1)$.
$(\xi1)$ and $(\xi2)$ imply $(\chi2)$, so we have constructed a suitable rescaling sequence $(\chi^\nu)$.
\end{itemize}
\end{proof}

\subsection{Limits in \texorpdfstring{$\ol{2\cM}_\bn$}{2Mn} are unique}
\label{ss:unique_limits}

\begin{lemma} \label{lem:2M_insert_edge}
Suppose that $(\bx^\nu,\bz^\nu) \subset \SWC_\bn$ is a sequence of smooth stable witch curves that Gromov-converges to $\bigl(2T,(\bx_\rho^\nu),(\bz_\alpha^\nu)\bigr)$ via $(\phi_\rho^\nu)$ and $(\psi_\alpha^\nu)$, that $\alpha, \beta \in V_\comp(T_b)$ are contiguous vertices, and that $(\chi^\nu) \subset G_2$ is a sequence with the property
\begin{gather} \label{eq:2M_insert_edge}
(\psi_\alpha^\nu)^{-1}\circ \chi^\nu \to z_{\alpha\beta} \:\:\text{u.c.s.\ away from}\:\: w_1,
\qquad
(\psi_\beta^\nu)^{-1}\circ \chi^\nu \to z_{\beta\alpha} \:\:\text{u.c.s.\ away from}\:\: w_2
\end{gather}
for some $w_1,w_2 \in \bR^2\cup\{\infty\}$.
If $\alpha$ is closer to the root than $\beta$, then $w_1=\infty$; otherwise, $w_2 = \infty$.
If $\mu_{ij}$ lies in $(T_b)_{\alpha\beta}$, then $(\chi^\nu)^{-1}(z_{ij}^\nu)$ converges to $w_2$; otherwise, $(\chi^\nu)^{-1}(z_{ij}^\nu)$ converges to $w_1$.
If $x_{\beta\lambda_i} \neq x_{\beta\alpha}$, then $p(\chi^\nu)^{-1}(x_i^\nu)$ converges to $p(w_2)$; otherwise, $p(\chi^\nu)^{-1}(x_i^\nu)$ converges to $p(w_1)$.
\end{lemma}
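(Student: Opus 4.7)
The plan is to prove the three assertions separately, in each case reducing to the combination of the axioms of Lemma~\ref{lem:2M_gromov_conseq}, the formula \eqref{eq:def_xalpharho_alpha_in_V1}, and an \emph{inverse principle} for sequences of affine maps. To establish the first assertion, observe that $G_2$ consists of affine maps $z\mapsto az+b$ with $a>0$ fixing $\infty$. A routine computation shows that if a sequence $g^\nu \in G_2$ converges u.c.s.\ away from some $w\in\bR^2\cup\{\infty\}$ to a finite constant $c\in\bR^2$, then necessarily $w=\infty$: convergence on any compact subset of $\bR^2$ forces $a^\nu\to 0$ and $b^\nu\to c$, which automatically gives uniform convergence on all of $\bR^2$. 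Since $z_{\alpha\beta}\in\bR^2$ precisely when $\alpha$ is closer to the root than $\beta$, the first assertion follows. The same analysis gives the inverse principle: any $g^\nu\in G_2$ with $g^\nu\to c$ u.c.s.\ away from $w$ satisfies $(g^\nu)^{-1}\to w$ u.c.s.\ away from $c$.

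For the second assertion, set $\xi^\nu\coloneqq(\psi_\alpha^\nu)^{-1}\circ\chi^\nu$ and $\eta^\nu\coloneqq(\psi_\beta^\nu)^{-1}\circ\chi^\nu$, so $\xi^\nu\to z_{\alpha\beta}$ u.c.s.\ away from $w_1$ and $\eta^\nu\to z_{\beta\alpha}$ u.c.s.\ away from $w_2$. Because the tree-pair surjection $2T\to 2T^\nu$ associated to the smooth sequence $(\bx^\nu,\bz^\nu)$ collapses every component vertex to the root and maps each $\mu_{ij}^{T_b}$ to $\mu_{ij}^{T_b^\nu}$, the axiom \textsc{(special point$'$)} of Lemma~\ref{lem:2M_gromov_conseq} yields
\begin{equation}
\lim_\nu(\psi_\alpha^\nu)^{-1}(z_{ij}^\nu) = z_{\alpha\mu_{ij}},\qquad \lim_\nu(\psi_\beta^\nu)^{-1}(z_{ij}^\nu) = z_{\beta\mu_{ij}}.
\end{equation}
Now factor $(\chi^\nu)^{-1}(z_{ij}^\nu) = (\xi^\nu)^{-1}\bigl((\psi_\alpha^\nu)^{-1}(z_{ij}^\nu)\bigr) = (\eta^\nu)^{-1}\bigl((\psi_\beta^\nu)^{-1}(z_{ij}^\nu)\bigr)$, and apply the inverse principle: when $\mu_{ij}\notin(T_b)_{\alpha\beta}$ the paths from $\alpha$ to $\beta$ and from $\alpha$ to $\mu_{ij}$ in $T_b$ split at $\alpha$, so $z_{\alpha\mu_{ij}}\neq z_{\alpha\beta}$ and the left-hand factorization yields convergence to $w_1$; when $\mu_{ij}\in(T_b)_{\alpha\beta}$, the analogous split at $\beta$ gives $z_{\beta\mu_{ij}}\neq z_{\beta\alpha}$ and the right-hand factorization yields convergence to $w_2$.

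For the third assertion, the same architecture applies after projecting via $p\colon G_2\to G_1$. Testing u.c.s.\ convergence of $\xi^\nu, \eta^\nu$ on compact subsets of the form $K\times\{0\}$ for $K\subset\bR$ compact gives $p(\xi^\nu)\to x_{\alpha\beta}$ u.c.s.\ away from $p(w_1)$, and similarly for $p(\eta^\nu)$. Lemma~\ref{lem:V1_x-limits} handles the case when the relevant component vertex lies in $V_\comp^1(T_b)$, while for vertices in $V_\comp^{\geq 2}(T_b)$ the \textsc{(restriction)} axiom combined with the stable-disk-tree analogue of \textsc{(special point$'$)} supplies $\lim_\nu p((\psi_\alpha^\nu)^{-1})(x_i^\nu) = x_{\alpha\lambda_i}$, and likewise for $\beta$. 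The case $x_{\beta\lambda_i}\neq x_{\beta\alpha}$ is then immediate from the $\eta$-factorization. The main obstacle is the remaining case $x_{\beta\lambda_i}=x_{\beta\alpha}$: one must verify combinatorially that $x_{\alpha\lambda_i}\neq x_{\alpha\beta}$, so that the $\xi$-factorization applies, using \eqref{eq:def_xalpharho_alpha_in_V1} and the coherence of $\pi$, with sub-cases depending on whether $\alpha$ and $\beta$ lie in $V_\comp^1$ or $V_\comp^{\geq 2}$. Intuitively, $x_{\beta\lambda_i}=x_{\beta\alpha}$ says $\lambda_i$ sits on the ``$\alpha$-side'' of $\beta$ in $T_s$, so from $\alpha$'s perspective the seam of $\lambda_i$ cannot coincide with the seam leading to $\beta$; making this precise across all the sub-cases is where the real work lies.
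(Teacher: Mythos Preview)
Your proposal is correct and follows essentially the same route as the paper. The paper proves the first claim by noting that elements of $G_2$ fix $\infty$, exactly as you do; for the second claim the paper argues by contradiction (assume $(\chi^\nu)^{-1}(z_{ij}^\nu)\not\to w_2$, pass to a subsequence in a compact set avoiding $w_2$, and use the hypothesis on $(\psi_\beta^\nu)^{-1}\circ\chi^\nu$ together with \textsc{(special point$'$)} to force $z_{\beta\alpha}=z_{\beta\mu_{ij}}$), which is the contrapositive of your direct argument via the ``inverse principle'' --- the two are equivalent. For the third claim the paper simply writes ``A similar argument proves the third claim,'' so your unpacking of this into the projected $\xi$-/$\eta$-factorizations, together with Lemma~\ref{lem:V1_x-limits} and \textsc{(restriction)} for the seam limits, is exactly what that sentence hides.

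One remark on the combinatorial check you flag: the implication $x_{\beta\lambda_i}=x_{\beta\alpha}\Rightarrow x_{\alpha\lambda_i}\neq x_{\alpha\beta}$ is lighter than you suggest. Once one observes (say when $\alpha$ is closer to the root and both lie in $V_\comp^{\geq2}$) that $x_{\alpha\beta}=x_{\pi(\alpha)\pi(\beta)}$ and $x_{\beta\alpha}=\infty$, the two conditions become ``$\lambda_i\in(T_s)_{\pi(\beta)}$'' and ``$\lambda_i\notin(T_s)_{\pi(\beta)}$'' respectively, which are manifestly complementary; the $V_\comp^1$ cases reduce similarly via \eqref{eq:def_xalpharho_alpha_in_V1}. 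So this is a routine unwinding of definitions rather than a genuine obstacle, which is presumably why the paper absorbs it into ``similar argument.''
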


\begin{proof}
To prove the first claim, it suffices by symmetry to consider the case that $\alpha$ is closer to the root than $\beta$.
In this case we have $z_{\alpha\beta} \in \bR^2$, so the first equation in \eqref{eq:2M_insert_edge} and the equality $\bigl((\psi_\alpha^\nu)^{-1}\circ\chi^\nu\bigr)(\infty) = \infty$ imply $w_1=\infty$.

To prove the second claim, it suffices by symmetry to consider the case that $\mu_{ij}$ lies in $(T_b)_{\alpha\beta}$.
Suppose that $(\chi^\nu)^{-1}(z_{ij}^\nu)$ does not converge to $w_2$.
Passing to a subsequence, we may assume that there exists a compact set $K \not\,\ni w_2$ with $(\chi^\nu)^{-1}(z_{ij}^\nu) \in K$ for all $\nu$.
By hypothesis, we have $(\psi_\beta^\nu)^{-1}(z_{ij}^\nu) = \bigl((\psi_\beta^\nu)^{-1}\circ\chi^\nu\bigr)\bigl((\chi^\nu)^{-1}(z_{ij}^\nu)\bigr) \to z_{\beta\alpha}$.
On the other hand, {\sc (special point')} implies that $(\psi_\beta^\nu)^{-1}(z_{ij}^\nu)$ converges to $z_{\beta\mu_{ij}}$, hence $z_{\beta\alpha} = z_{\beta\mu_{ij}}$.
This contradicts the
hypothesis that $\mu_{ij}$ lies in $(T_b)_{\alpha\beta}$.

A similar argument proves the third claim.
\end{proof}

\begin{lemma} \label{lem:2M_commensurate}
Suppose that $(\bx^\nu,\bz^\nu) \subset \SWC_\bn$ and $(\psi_\alpha^\nu) \subset G_2$ are as in Lemma~\ref{lem:2M_insert_edge}, and suppose that $(\chi^\nu) \subset G_2$ is a sequence of reparametrizations with the following properties:
\begin{itemize}
\item[(a)] For every $i, j$ the limits $\zeta_i \coloneqq \lim_{\nu\to\infty} p(\chi^\nu)^{-1}(x_i^\nu)$, $\xi_{ij} \coloneqq \lim_{\nu\to\infty} (\chi^\nu)^{-1}(z_{ij}^\nu)$ exist.

\item[(b)] Define $Y_s \coloneqq (\zeta_i)_i \cup \{\infty\}$ and $Y_b \coloneqq (\xi_{ij})_{i,j} \cup \{\infty\}$.
Either $\# Y_b \geq 3$, or $\# Y_b = 2$ and $\#Y_s \geq 3$.
\end{itemize}
Then there exists $\alpha \in V_\comp(T_b)$ such that $\bigl((\psi_\alpha^\nu)^{-1}\circ\chi^\nu\bigr)$ has a subsequence
that
converges uniformly to an element of $G_2$.
\end{lemma}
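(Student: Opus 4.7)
The plan is a walking argument on $T_b$. Write $g_\alpha^\nu \coloneqq (\psi_\alpha^\nu)^{-1}\circ\chi^\nu = (z\mapsto a_\alpha^\nu z + b_\alpha^\nu) \in G_2$; we seek $\alpha \in V_\comp(T_b)$ and a subsequence along which $(a_\alpha^\nu,b_\alpha^\nu)$ converges to some $(a,b)\in \bR_{>0}\times\bR^2$, which is exactly uniform convergence of $g_\alpha^\nu$ to an element of $G_2$. After passing to a common subsequence, I may assume that for every $\alpha$ the pair $(a_\alpha^\nu,b_\alpha^\nu)$ converges in the compactification $[0,\infty]\times(\bR^2\cup\{\infty\})$ and that the limits in hypothesis (a), in Lemma~\ref{lem:2M_gromov_conseq}, and in Lemma~\ref{lem:V1_x-limits} are all attained.

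The walk is driven by two identities. First, for contiguous $\alpha,\beta \in V_\comp(T_b)$, the relation $g_\beta^\nu = (\psi_{\alpha\beta}^\nu)^{-1}\circ g_\alpha^\nu$, combined with the fact (from \textsc{(Rescaling')}) that $\psi_{\alpha\beta}^\nu \to z_{\alpha\beta}$ u.c.s.\ away from $z_{\beta\alpha}$, lets me translate the limit of $g_\alpha^\nu$ into that of $g_\beta^\nu$. Second, the consistency relation $g_\alpha^\nu\bigl((\chi^\nu)^{-1}(z_{ij}^\nu)\bigr)=(\psi_\alpha^\nu)^{-1}(z_{ij}^\nu)\to z_{\alpha\mu_{ij}}$ from \textsc{(Special point')} forces any limit $g_\alpha \in G_2$ to satisfy $g_\alpha(\xi_{ij})=z_{\alpha\mu_{ij}}$, and the projected version from Lemma~\ref{lem:V1_x-limits} gives the analogous constraint $p(g_\alpha)(\zeta_i)=x_{\alpha\lambda_i}$.

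Starting at $\alpha_0 = \alpha_\root^{T_b}$, at each stage I classify the limit of $g_\alpha^\nu$. If it lies in $\bR_{>0}\times\bR^2$ we stop. If $a_\alpha^\nu \to 0$, then $g_\alpha^\nu$ collapses compact sets to $\lim b_\alpha^\nu$; the consistency relation forces this limit to equal every $z_{\alpha\mu_{ij}}$ with $\xi_{ij}\in\bR^2$, so (excepting a degenerate subcase that violates hypothesis (b), handled in the last paragraph) it must be a nodal point $z_{\alpha\beta}$ with some contiguous $\beta \in V_\comp(T_b)$, and I walk from $\alpha$ to $\beta$. If $a_\alpha^\nu \to \infty$, or if $a_\alpha^\nu$ is bounded with $b_\alpha^\nu \to \infty$, I walk toward the root; this case cannot occur at the root itself because it would collapse all finite $\xi_{ij}$ to a single point (or to $\infty$) and, via Lemma~\ref{lem:V1_x-limits}, also collapse $Y_s$, contradicting hypothesis (b). The walk does not revisit vertices, hence terminates at a vertex where $g_\alpha^\nu$ converges in $G_2$.

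The main obstacle is the case $\#Y_b = 2$ with $\#Y_s \geq 3$: when there is only one finite $\xi_{ij}$, the bubble-tree collapse at a candidate terminal vertex can send everything to that single point without an immediate bubble-tree contradiction, and also the walking step that would move from $\alpha$ to a would-be $\beta$ may be ambiguous when the collapse point is not a nodal point. To handle this, I will run a parallel analysis on the projections $p(g_\alpha^\nu)(z) = a_\alpha^\nu\,p(z) + p(b_\alpha^\nu)$, using the \textsc{(Restriction)} axiom together with Lemma~\ref{lem:V1_x-limits} to show that the seam coordinates $\zeta_i$ pin down $a_\alpha^\nu$ and $p(b_\alpha^\nu)$ at the terminal vertex, while the single nondegenerate finite $\xi_{ij}$ pins down the remaining $y$-coordinate of $b_\alpha^\nu$.
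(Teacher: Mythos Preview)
Your approach is the paper's approach: a walking argument on $V_\comp(T_b)$ in which, at each $\alpha$, you classify the degeneration of $g_\alpha^\nu=(\psi_\alpha^\nu)^{-1}\circ\chi^\nu$ and, when it collapses to a nodal point $z_{\alpha\beta}$, step to the contiguous $\beta$. The paper organizes it as: (Step~1) any sequence in $G_2$ with no convergent subsequence has a subsequence converging to some $w$ u.c.s.\ away from some $w'$; (Step~2) for $\tau^\nu=g_\alpha^\nu$ one must have $w\in Z_\alpha^\node$ and $w'\in Y_b$; (Step~3) build the path and contradict finiteness of $T_b$.

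There is one genuine gap. You assert that ``the walk does not revisit vertices'' but give no argument, and nothing in your trichotomy prevents the walk from going $\alpha\to\beta\to\alpha$. This is exactly where Lemma~\ref{lem:2M_insert_edge} enters, and it is the only place the paper uses it: if $g_\alpha^\nu\to z_{\alpha\beta}$ u.c.s.\ away from $w_1$ and $g_\beta^\nu\to z_{\beta\alpha}$ u.c.s.\ away from $w_2$, that lemma forces one of $w_1,w_2$ to be $\infty$, every $\xi_{ij}$ to lie in $\{w_1,w_2\}$, and every $\zeta_i$ to lie in $\{p(w_1),p(w_2)\}$, whence $\#Y_b\leq 2$ and $\#Y_s\leq 2$, contradicting~(b). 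Your last paragraph treats a related but distinct issue --- what happens when the collapse point is a marked point rather than a nodal one --- and the ``parallel analysis on projections'' you sketch there does not by itself exclude backtracking. The paper instead absorbs the $\#Y_b=2$, $\#Y_s\geq 3$ case into Step~2: it argues directly that $w\in Z_\alpha^\node$ for \emph{every} $\alpha$, by showing that $w\notin Z_\alpha^\node$ forces $w=\infty$, hence $\alpha=\alpha_\root^{T_b}$, and then all $\zeta_i$ collapse to a single value $p(w')$, giving $\#Y_s\leq 2$.
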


\begin{proof}
\noindent {\sc Step 1:} {\it If $(\tau^\nu) \subset G_2$ has no convergent subsequence, then it has a subsequence converging to $w$ u.c.s.\ away from $w'$ for some $w,w' \in \bR^2\cup\{\infty\}$.}
\medskip

\noindent Write $\tau^\nu(z) = a^\nu z + b^\nu$.
After passing to a subsequence, we may assume that the limits
\begin{equation}
\lim_{\nu\to\infty} a^\nu \eqqcolon a^\infty \in \bR_{\geq0}\cup\{\infty\}, \quad \lim_{\nu\to\infty} b^\nu \eqqcolon b^\infty \in \bR^2\cup\{\infty\}
\end{equation}
exist.
It suffices to prove the claim for either $(\tau^\nu)$ or $\bigl((\tau^\nu)^{-1}\bigr)$; replacing $\tau^\nu$ by $(\tau^\nu)^{-1}$ if necessary, we may assume $a^\infty \in \bR_{\geq0}$.
By hypothesis, it cannot be that the containments $a^\infty \in \bR_{>0}$, $b^\infty \in \bR^2$ both hold.
If $b^\infty = \infty$, then $\tau^\nu$ converges to $\infty$ u.c.s.\ away from $\infty$.
If $a^\infty = 0$ and $b^\infty \in \bR^2$, then $\tau^\nu$ converges to $b^\infty$ u.c.s.\ away from $\infty$.

\medskip
\noindent {\sc Step 2:} {\it If $\tau^\nu \coloneqq (\psi_\alpha^\nu)^{-1}\circ \chi^\nu$ has no uniformly-convergent subsequence, then after passing to a subsequence, $\tau^\nu$ converges to $w$ u.c.s.\ away from $w'$ for some $w \in Z_\alpha^\node$ and $w' \in Y_b$.}
\medskip


\noindent
By Step 1, we may pass to a subsequence such that $\tau^\nu$ converges to $w$ u.c.s.\ away from $w'$ for some $w, w' \in \bR^2\cup\{\infty\}$; it remains to show $w \in Z_\alpha^\node$, $w' \in Y_b$.
Suppose for a contradiction that $w$ does not lie in $Z_\alpha^\node$.
Then at most one $\mu \in (\mu_{ij}) \cup \{\mu_\infty^{T_b}\}$ satisfies $z_{\alpha\mu} = w$.
Assume from now on that there is either (i) no such $\mu$, or (ii) the only such $\mu$ is $\mu = \mu_\infty^{T_b}$; the case $w = z_{\alpha\mu_{ij}}$ is similar.
The reparametrizations $(\tau^\nu)^{-1}$ converge to $w'$ u.c.s.\ away from $w$ and $(\psi_\alpha^\nu)^{-1}(z_{ij}^\nu)$ converges to $z_{\alpha\mu_{ij}} \neq w$ by
{\sc (special point')},
so for every $i,j$ we have
\begin{equation}
\xi_{ij} = \lim_{\nu\to\infty} (\chi^\nu)^{-1}(z_{ij}^\nu) = \lim_{\nu\to\infty} (\tau^\nu)^{-1}\bigl((\psi_\alpha^\nu)^{-1}(z_{ij}^\nu)\bigr) = w'.
\end{equation}
This and the inequality $\# Y_b \geq 2$ implies that $\# Y_b = 2$.
Moreover, (ii) must hold rather than (i): $\#Y_b \geq 2$ implies $w' \neq \infty$, and it is only possible for a sequence in $G_2$ to converge to $w' \neq \infty$ away from $w \in \bR^2\cup\{\infty\}$ if $w$ is equal to $\infty$.

Next, note that the facts $w \not\in Z_\alpha^\node$ and $w = \infty$ imply $\alpha = \alpha_\root^{T_b}$, hence
\begin{align}
\label{eq:new_bubble_psi_alpha_limits}
\lim_{\nu\to\infty} p(\psi_\alpha^\nu)^{-1}(x_i^\nu) = x_{\alpha_\root\lambda_i} \in \bR \quad\forall\: i.
\end{align}
The inequality $w' \neq \infty$ implies that $p(\tau^\nu)^{-1}$ converges to $p(w')$ u.c.s.\ away from $\infty$.
\eqref{eq:new_bubble_psi_alpha_limits} now implies that for any $i$, we have
\begin{align}
\zeta_i = \lim_{\nu\to\infty} p(\chi^\nu)^{-1}(x_i^\nu) = \lim_{\nu\to\infty} p(\tau^\nu)^{-1}\bigl(p(\psi_\alpha^\nu)^{-1}(x_i^\nu)\bigr) = p(w').
\end{align}
Therefore $\# Y_s \leq 2$.
Together with the equality $\# Y_b = 2$, we have derived a contradiction.

A similar argument shows $w' \in Y_b$.

\medskip
\noindent {\sc Step 3:} {\it If the conclusion of Lemma~\ref{lem:2M_commensurate} does not hold, then there is a contradiction.}
\medskip

\noindent Suppose that no $\alpha \in V_\comp(T_b)$ has the property that a subsequence of $\bigl((\psi_\alpha^\nu)^{-1}\circ\chi^\nu\bigr)$ converges uniformly; we will construct a non-self-intersecting infinite sequence $(\alpha_1,\alpha_2,\ldots)$ in $V_\comp(T_b)$ with every consecutive pair $\alpha_i, \alpha_{i+1}$ contiguous, a contradiction.
We choose $\alpha_1$ to be any element of $V_\comp(T_b)$.
By Step 2, we may pass to a subsequence such that $(\psi_{\alpha_1}^\nu)^{-1}\circ\chi^\nu$ converges to $w_1 \in Z^\node_{\alpha_1}$ u.c.s.\ away from $w_1' \in Y_b$; define $\alpha_2 \in V_\comp(T_b)$ to be the vertex contiguous to $\alpha_1$ with $w_2 = z_{\alpha_1\alpha_2}$.
Inductively defining our sequence in this fashion, we obtain $(\alpha_1,\alpha_2,\ldots)$ with the property that $(\psi_{\alpha_i}^\nu)^{-1}\circ\chi^\nu$ converges to $z_{\alpha_i\alpha_{i+2}}$ u.c.s.\ away from $w_i'$.
This path does not intersect itself: Indeed, assume that $\alpha_i = \alpha_{i+2}$ for some $i$.
Then Lemma~\ref{lem:2M_insert_edge} with
$\alpha \coloneqq \alpha_i$, $\beta \coloneqq \alpha_{i+1}$
implies $\#Y_b \leq 2$ and $\#Y_s \leq 2$, a contradiction.
We have therefore constructed an infinite sequence in $V_\comp(T_b)$ with each consecutive pair contiguous, a contradiction.
\end{proof}

\begin{theorem}
\label{thm:2Mn_unique_limits}
Suppose that $\bigl(2T^\nu, (\bx_\rho^\nu), (\bz_\alpha^\nu)\bigr) \subset \SWC_\bn$ Gromov-converges to two stable witch curves $\bigl(2T,(\bx_\rho),(\bz_\alpha)\bigr)$ and $\bigl(\wt{2T},(\wt \bx_{\wt\rho}),(\wt\bz_{\wt\alpha})\bigr)$.
Then $\bigl(2T,(\bx_\rho),(\bz_\alpha)\bigr)$ and $\bigl(\wt{2T},(\wt \bx_{\wt\alpha}),(\wt\bz_{\wt\alpha})\bigr)$ are isomorphic.
\end{theorem}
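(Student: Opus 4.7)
The plan is to extract an isomorphism $(2f,(\theta_\rho),(\theta_\alpha))$ by asymptotically comparing the two sequences of rescalings $(\phi_\rho^\nu,\psi_\alpha^\nu)$ and $(\wt\phi_{\wt\rho}^\nu,\wt\psi_{\wt\alpha}^\nu)$ produced by the two Gromov convergences. After passing to a subsequence I may assume the tree-pairs $2T^\nu$ are all equal to some fixed $2T_0$ and that the stable tree-pair surjections $2T,\wt{2T}\to 2T_0$ are independent of $\nu$. The analogue of Thm.~\ref{thm:2Mn_unique_limits} for $\ol\cM_r$, which this paper invokes as a companion result, applied to $\bigl(T_s^\nu,(\bx_\rho^\nu)\bigr)$ yields an RRT isomorphism $f_s\colon T_s\to\wt T_s$ and reparametrizations $(\theta_\rho)\subset G_1$ intertwining $\phi_\rho^\nu$ with $\wt\phi_{f_s(\rho)}^\nu$ asymptotically; after relabeling I may assume $T_s=\wt T_s$ and $\theta_\rho=\id$.

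Next, I would construct a bijection $F\colon V_\comp(T_b)\to V_\comp(\wt T_b)$ as follows. Fix $\alpha\in V_\comp(T_b)$ and apply (the component-wise version of) Lemma~\ref{lem:2M_commensurate} with $\chi^\nu\coloneqq\psi_\alpha^\nu$, viewed as a test sequence for the Gromov convergence into $\wt{2T}$. Hypothesis (a) of that lemma holds by {\sc(rescaling$'$)}, {\sc(special point$'$)}, and Lemma~\ref{lem:V1_x-limits} applied to the first Gromov convergence; hypothesis (b) is forced by the stability of $\alpha$ in $2T$, since $Z_\alpha^\spec$ always contains at least three distinct elements (or, in the $\alpha\in V_\comp^1(T_b)$ edge case, the seam-level contribution from Lemma~\ref{lem:V1_x-limits} supplies the $\#Y_b=2,\,\#Y_s\geq 3$ alternative). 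The lemma then yields $\wt\alpha\in V_\comp(\wt T_b)$ such that a subsequence of $\theta_\alpha^\nu\coloneqq(\wt\psi_{\wt\alpha}^\nu)^{-1}\circ\psi_\alpha^\nu$ converges uniformly to an element $\theta_\alpha\in G_2$; I set $F(\alpha)\coloneqq\wt\alpha$. This $\wt\alpha$ is unique, because two candidates $\wt\alpha_1\neq\wt\alpha_2$ would force $(\wt\psi_{\wt\alpha_1}^\nu)^{-1}\circ\wt\psi_{\wt\alpha_2}^\nu$ to converge uniformly, contradicting the {\sc(rescaling)} axiom for the $\wt{2T}$-convergence. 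The symmetric construction produces a map $\wt F$, and the same uniqueness argument shows $F\circ\wt F=\id$, so $F$ is a bijection.

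Finally, I would verify that $F$ (together with $f_s$) assembles into an isomorphism of stable tree-pairs $2f\colon 2T\to\wt{2T}$ and that the reparametrizations $(\theta_\rho,\theta_\alpha)$ satisfy the compatibilities of Def.~\ref{def:SWC}. The identity $p(\theta_\alpha)=\theta_{\pi(\alpha)}$ is obtained by passing to the limit in {\sc(restriction)} on both sides. For contiguous $\alpha,\beta\in V_\comp(T_b)$, combining {\sc(special point$'$)} on both Gromov convergences with the uniform limit $\theta_\alpha^\nu\to\theta_\alpha$ yields $\wt z_{F(\alpha)F(\beta)}=\theta_\alpha(z_{\alpha\beta})$, so $F$ carries contiguous pairs to contiguous pairs with matching nodal positions; an analogous analysis identifies the marked-point and seam-vertex substructure of $T_b$ and $\wt T_b$ via Lemma~\ref{lem:2M_insert_edge}. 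I expect the main obstacle to lie in the case-checking of hypothesis (b) of Lemma~\ref{lem:2M_commensurate} when $\alpha\in V_\comp^1(T_b)$ is only marginally stable (the $k=1,\,\#\incom(\beta_1)\geq 2$ case of Example~\ref{ex:tree-pair_examples}), where the fallback to $\#Y_s\geq 3$ via the seam data and Lemma~\ref{lem:V1_x-limits} requires careful accounting to ensure that the limit reparametrizations built on the component and seam sides are consistent.
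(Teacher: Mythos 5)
Your proposal follows essentially the same strategy as the paper's proof: reduce to a fixed $2T_0$, dispose of the seam tree via the uniqueness theorem for $\ol\cM_r$, use Lemma~\ref{lem:2M_commensurate} to build the bijection between $V_\comp(T_b)$ and $V_\comp(\wt T_b)$ via uniform limits of $(\wt\psi_{\wt\alpha}^\nu)^{-1}\circ\psi_\alpha^\nu$, and then verify compatibilities. The key lemma and the mechanism for extracting the bijection are exactly those of the paper (Steps~1a--1b of its proof). However, two points deserve attention.

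First, Lemmas~\ref{lem:2M_insert_edge} and~\ref{lem:2M_commensurate} are stated and proved only for sequences of \emph{smooth} stable witch curves. Your ``(the component-wise version of) Lemma~\ref{lem:2M_commensurate}'' gestures at the needed generalization but does not supply it; the paper instead carries out an explicit reduction (its Step~2, especially Step~2a): for each $\beta \in V_\comp(T_b')$, it extracts the smooth sequence $\bigl(\bx^\nu_{\pi(\beta)},\bz^\nu_\beta\bigr)$, shows this sequence Gromov-converges to a restriction $2T|_\beta$ of $\bigl(2T,(\bx_\rho),(\bz_\alpha)\bigr)$, and applies Step~1 to each such restriction. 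Without some version of this reduction, Lemma~\ref{lem:2M_commensurate} does not apply to your setting, because the hypothesis ``$(\bx^\nu,\bz^\nu)$ as in Lemma~\ref{lem:2M_insert_edge}'' requires smoothness. This is the main missing ingredient.

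Second, your justification of hypothesis~(b) is not quite right. You assert that $Z_\alpha^\spec$ always contains at least three distinct elements for $\alpha$ stable, with a fallback only for $\alpha \in V_\comp^1(T_b)$. In fact one can have a stable $\alpha \in V_\comp^{\geq 2}(T_b)$ with $\#Z_\alpha^\spec = 2$: take $\incom(\alpha) = (\beta_1,\beta_2)$ with $\incom(\beta_1)$ a single component and $\incom(\beta_2) = \emptyset$; then $Z_\alpha^\spec = \{z_{\alpha\gamma},\infty\}$. In this situation one still has $\#Y_b = 2$ and $\#Y_s \geq 3$ (since $\pi(\alpha)$ has $\geq 2$ incoming neighbors in a stable $T_s$), so the conclusion holds, but via the $V_\comp^{\geq2}$ seam count rather than your stated dichotomy. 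Relatedly, note that $Y_b$ in Lemma~\ref{lem:2M_commensurate} is the set of limits of \emph{marked points}, not of all special points; the containment $Z_\alpha^\spec \subset Y_b$ does hold, but only because stability forces every component to carry a marked point in its subtree (and so every nodal position is realized as a marked-point limit via {\sc(special point')}). That deduction should be stated, since it is what actually converts the combinatorial stability of $2T$ into the cardinality bound the lemma needs.
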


\begin{proof}
\medskip
\noindent {\sc Step 1:} {\it If $(\bx^\nu,\bz^\nu) \subset \SWC_\bn$ is a sequence of smooth stable disk trees Gromov-converging to $\bigl(2T,(\bx_\rho),(\bz_\alpha)\bigr)$ and $\bigl(\wt{2T},(\wt \bx_{\wt\rho}),(\wt\bz_{\wt\alpha})\bigr)$, then $\bigl(2T,(\bx_\rho),(\bz_\alpha)\bigr)$ and $\bigl(\wt{2T},(\wt \bx_{\wt\rho}),(\wt\bz_{\wt\alpha})\bigr)$ are isomorphic.}
\medskip

\medskip
\noindent {\sc Step 1a:} {\it The stable disk trees $\bigl(T_s,(\bx_\rho)\bigr)$ and $\bigl(\wt T_s,(\wt\bx_{\wt\rho})\bigr)$ are isomorphic.}
\medskip

\noindent This is a consequence of the Hausdorffness of $\ol\cM_r$.
We may therefore assume $\bigl(T_s,(\bx_\rho)\bigr) = \bigl(\wt T_s,(\wt\bx_{\wt\rho})\bigr)$.

\medskip


\noindent {\sc Step 1b:} {\it After passing to a subsequence, there is a unique bijection $g\colon V_\comp(T_b) \to V_\comp(\wt T_b)$ such that the uniform limits
\begin{equation} \label{eq:2M_haus_comm}
\chi_\alpha \coloneqq \lim_{\nu\to\infty} (\psi_\alpha^\nu)^{-1} \circ \wt\psi_{g(\alpha)}^\nu \in G_2
\end{equation}
exist.}
\medskip

\noindent Fix $\wt\alpha \in V_\comp(\wt T_b)$.
Applying Lemma~\ref{lem:2M_commensurate} with $\chi^\nu \coloneqq \wt\psi_{\wt\alpha}^\nu$, we see that there exists $\alpha \in V_\comp(T_b)$ such that a subsequence of $(\psi_\alpha^\nu)^{-1}\circ \wt\psi_{\wt\alpha}^\nu$ converges uniformly to an element of $G_2$.
Moreover, $\alpha$ is uniquely determined: indeed, for any other $\beta \in V_\comp(T_b)$, {\sc (rescaling')} implies that
\begin{equation}
(\psi_\beta^\nu)^{-1}\circ\wt\psi_{\wt\alpha}^\nu = \bigl((\psi_\beta^\nu)^{-1}\circ\psi_\alpha^\nu\bigr) \circ \bigl((\psi_\alpha^\nu)^{-1}\circ\wt\psi_{\wt\alpha}^\nu\bigr)
\end{equation}
converges to $z_{\beta\alpha}$ u.c.s.\ away from a single point.
By applying this argument at every interior vertex of $\wt T_b$, we obtain a uniquely-determined function $h\colon V_\comp(\wt T_b) \to V_\comp(T_b)$ and a subsequence of our original data such that the uniform limit
\begin{align}
\lim_{\nu\to\infty} \bigl(\psi_{h(\wt\alpha)}^\nu\bigr)^{-1}\circ\wt\psi_{\wt\alpha}^\nu \in G_2
\end{align}
exists.
Applying this reasoning with $T_b$ and $\wt T_b$ interchanged shows that $h$ is invertible; set $g \coloneqq h^{-1}$.

\medskip
\noindent {\sc Step 1c:} {\it The reparametrizations $\chi_\alpha$ satisfy
\begin{align}
\wt z_{g(\alpha)\wt\mu_{ij}} = \chi_\alpha^{-1}(z_{\alpha\mu_{ij}}),
&\qquad
\wt z_{g(\alpha)g(\beta)} = \chi_\alpha^{-1}(z_{\alpha\beta}), \\
\wt x_{g(\alpha)\wt\lambda_i} = p(\chi_\alpha)^{-1}(x_{\alpha\lambda_i}),
&\qquad
\wt x_{g(\alpha)g(\beta)} = p(\chi_\alpha)^{-1}(x_{\alpha\beta}). \nonumber
\end{align}}

\noindent The first equation follows from \eqref{eq:2M_haus_comm} and {\sc (special point')}:
\begin{align}
\chi_\alpha^{-1}(z_{\alpha\mu_{ij}})
=
\chi_\alpha^{-1}\Bigl(\lim_{\nu\to\infty} (\psi_\alpha^\nu)^{-1}(z_{ij}^\nu)\Bigr)
=
\lim_{\nu\to\infty} \bigl(\wt\psi_{g(\alpha)}^\nu\bigr)^{-1}(z_{ij}^\nu)
=
\wt z_{g(\alpha)\wt\mu_{ij}}.
\end{align}
A similar deduction proves the third equation.
The second follows from {\sc (rescaling')}, the convergence of $\psi_{\alpha\beta}^\nu$ to $z_{\alpha\beta}$ u.c.s.\ away from $z_{\beta\alpha}$, and the convergence of $\wt\psi_{g(\alpha)g(\beta)}^\nu$ to $\wt z_{g(\alpha)g(\beta)}$ u.c.s.\ away from $\wt z_{g(\beta)g(\alpha)}$.
Indeed, choosing $z \in \bR^2\setminus\{z_{\beta\alpha}, \chi_\beta(\wt z_{g(\beta)g(\alpha)})\}$, we have:
\begin{align}
\chi_\alpha^{-1}(z_{\alpha\beta})
=
\chi_\alpha^{-1}\Bigl(\lim_{\nu\to\infty} \psi_{\alpha\beta}^\nu(z)\Bigr)
=
\lim_{\nu\to\infty} \Bigl(\bigl(\wt\psi_{g(\alpha)}^\nu\bigr)^{-1}\circ \psi_\beta^\nu\Bigr)(z)
&=
\lim_{\nu\to\infty} \bigl(\wt\psi_{g(\alpha)g(\beta)}^\nu\bigr)^{-1}\bigl(\chi_\beta^{-1}(z)\bigr) \\
&= \wt z_{g(\alpha)g(\beta)}. \nonumber
\end{align}
Finally, the fourth equation follows from applying $p$ to the second equation.

\medskip
\noindent {\sc Step 1d:} {\it We extend $g$ to a bijection $V(T_b) \to V(\wt T_b)$.}
\medskip

\noindent We showed in Step 1b that $g\colon V_\comp(T_b) \to V_\comp(\wt T_b)$ is a bijection.
We now extend $g$ to a bijection between $V(T_b)$ and $V(\wt T_b)$.
First, set $g(\mu_{ij}^{T_b}) \coloneqq \mu_{ij}^{\wt T_b}$.
Next, suppose that $\alpha$ is an element of $V_\comp^{\geq 2}(T_b)$.
By {\sc(restriction)}, $\incom(\alpha)$ is in bijection with the limit set $\bigl(\lim_{\nu\to\infty} p\bigl((\psi_\alpha^\nu)^{-1}\bigr)(x_i^\nu)\:|\: 1\leq i \leq r\bigr)$.
It follows from Step 1b that there is a bijection
\begin{align}
\bigl(\lim_{\nu\to\infty} p\bigl((\psi_\alpha^\nu)^{-1}\bigr)(x_i^\nu)\:|\: 1\leq i \leq r\bigr)
\simeq
\bigl(\lim_{\nu\to\infty} p\bigl((\wt\psi_{g(\alpha)}^\nu)^{-1}\bigr)(x_i^\nu) \:|\: 1 \leq i \leq r\bigr).
\end{align}
It follows from Lemma~\ref{lem:V1_x-limits} that $g(\alpha)$ lies in $V_\comp^{\geq 2}(\wt T_b)$, so we can identify $\incom(\alpha)$ and $\incom(g(\alpha))$.
A similar argument shows that if $\alpha$ lies in $V_\comp^1(T_b)$, then $g(\alpha)$ lies in $V_\comp^1(\wt T_b)$, hence we can identify the incoming neighbor of $\alpha$ with that of $g(\alpha)$.
We have now extended $g$ to a bijection $g\colon V(T_b) \to V(\wt T_b)$.

\medskip
\noindent {\sc Step 1e:} {\it Two vertices $\alpha, \beta$ in $V_\comp(T_b)$ are contiguous if and only if (1) there is no $\gamma \in (\mu_{ij}) \cup \{\mu_\infty^{T_b}\}$ satisfying both $z_{\alpha\gamma} = z_{\alpha\beta}$ and $z_{\beta\gamma} = z_{\beta\alpha}$, and (2) there is no $\delta \in (\lambda_i) \cup \{\lambda_\infty^{T_s}\}$ satisfying both $x_{\alpha\delta} = x_{\alpha\beta}$ and $x_{\beta\delta} = x_{\beta\alpha}$.
Vertices $\alpha \in V_\comp(T_b)$, $\mu_{ij}$ are contiguous if and only if there is no $\gamma \in V_\comp(T_b)$ with $z_{\alpha\mu_{ij}} \neq z_{\alpha\gamma}$.}
\medskip

\noindent Suppose that $\alpha,\beta \in V_\comp(T_b)$ are contiguous, and fix $\gamma \in (\mu_{ij})\cup\{\mu_\infty^{T_b}\}$.
Switching $\alpha$ and $\beta$ if necessary, we may assume that $\gamma$ lies in $(T_b)_{\alpha\beta}$.
Then $z_{\beta\alpha} \neq z_{\beta\gamma}$.
A similar argument produces $\delta \in (\lambda_i) \cup \{\lambda_\infty^{T_s}\}$ with $x_{\beta\alpha} \neq x_{\beta\delta}$.

Next, we prove the contrapositive of the converse: Suppose that $\alpha, \beta \in V_\comp(T)$ are not contiguous, and define $(\alpha = \gamma_1, \gamma_2,\ldots,\gamma_k = \beta)$ to be the vertices in $V_\comp(T_b)$ through which the path from $\alpha$ to $\beta$ passes.
Suppose that $\gamma_2$ lies in $V_\comp^1$.
Define $(\gamma_2 = \delta_1, \delta_2,\ldots, \delta_\ell)$ to be a non-self-intersecting sequence in $V_\comp(T_b) \cup V_\mk(T_b)$ that starts at $\gamma_2$, terminates at a vertex in $(\mu_{ij})_{i,j} \cup \{\alpha_\root^{T_b}\}$, has $\delta_i, \delta_{i+1}$ contiguous for each $i$, and intersects $(\gamma_1,\ldots,\gamma_k)$ only at $\gamma_2$.
(The existence of such a sequence follows from the stability of $2T$.)
Then if we set $\eps \coloneqq \delta_\ell$ if $\delta_\ell \in (\mu_{ij})$ and $\eps \coloneqq \mu_\infty^{T_b}$ if $\delta_\ell = \alpha_\root^{T_b}$, we have $z_{\alpha\eps} = z_{\alpha\beta}$ and $z_{\beta\eps} = z_{\beta\alpha}$.
On the other hand, suppose that $\gamma_2$ lies in $V_\comp^{\geq 2}(T_b)$.
Define $(\pi(\gamma_2)=\rho_1,\rho_2,\ldots,\rho_\ell)$ to be a path in $T_s$ that starts at $\pi(\gamma_2)$, terminates at a vertex in $(\lambda_i)_i\cup \{\rho_\root^{T_s}\}$, and intersects $(\pi(\gamma_1),\ldots,\pi(\gamma_k))$ only at $\pi(\gamma_2)$.
Then if we set $\sigma \coloneqq \rho_\ell$ if $\rho_\ell \in (\lambda_i)$ and $\sigma \coloneqq \lambda_\infty^{T_s}$ if $\rho_\ell = \rho_\root^{T_s}$, we have $x_{\alpha\sigma} = x_{\alpha\beta}$ and $x_{\beta\sigma} = z_{\beta\alpha}$.

A similar, simpler argument proves the second assertion in Step 1e.

\medskip
\noindent {\sc Step 1f:} {\it We show that $f$ extends to an isomorphism of RRTs, then complete Step 1.}
\medskip

It remains to prove the following facts:
\begin{itemize}
\item $g(\alpha_\root) = \wt\alpha_\root$, where we denote $\alpha_\root \coloneqq \alpha_\root^{T_b}$ and $\wt\alpha_\root \coloneqq \alpha_\root^{\wt T_b}$.

\item For $\alpha \in V_\comp(T_b)$, $g$ induces a bijection from $\incom(\alpha)$ to $\incom(g(\alpha))$.

\item For $\alpha, \beta \in V_\comp(T_b)$ with $\beta$ an incoming neighbor of the $i$-th incoming neighbor of $\alpha$, $g(\beta)$ is an incoming neighbor of the $i$-th incoming neighbor of $g(\alpha)$.

\item For $\alpha \in V_\seam(T_b)$ and $\mu_{ij} \in \incom(\alpha)$, $g(\mu_{ij})$ lies in $\incom(g(\alpha))$.

\item $g$ respects the ribbon tree structure of $T_b$ and $\wt T_b$.
\end{itemize}

First, we show $g(\alpha_\root) = \wt\alpha_\root$.
Fix $\wt\alpha \in V_\comp(\wt T_b) \setminus \{g(\alpha_\root)\}$, and write $\wt\alpha = g(\alpha)$ for some $\alpha \in V_\comp(T_b)$.
Step 1c implies $\wt z_{g(\alpha_\root)\wt\alpha} = \chi_{\alpha_\root}^{-1}(z_{\alpha_\root\alpha}) \neq \infty$.
Since $\wt z_{g(\alpha_\root)\wt\alpha}$ is finite for every $\wt\alpha \in V_\comp(\wt T_b)\setminus \{g(\alpha_\root)\}$, we must have $g(\alpha_\root) = \wt\alpha_\root$.

The second bullet is an immediate consequence of the construction of $g$ on $V_\seam(T_b)$.

Next, fix $\alpha, \beta \in V_\comp(T_b)$ with $\beta$ an incoming neighbor of the $i$-th incoming neighbor of $\alpha$.
By Step 1e, there is no $\gamma \in (\mu_{ij}) \cup \{\mu\infty^{T_b}\}$ satisfying both $z_{\alpha\gamma} = z_{\alpha\beta}$ and $z_{\beta\gamma} = z_{\beta\alpha}$, nor is there $\delta \in (\lambda_i) \cup \{\lambda_\infty^{T_s}\}$ satisfying both $x_{\alpha\delta} = x_{\alpha\beta}$ and $x_{\beta\delta} = x_{\beta\alpha}$.
Together with Step 1c, it follows that there is no $\wt\gamma \in (\wt\mu_{ij}) \cup \{\mu_\infty^{\wt T_b}\}$ with both $\wt z_{g(\alpha)\wt\gamma} = \wt z_{g(\alpha)g(\beta)}$ and $\wt z_{g(\beta)\wt\gamma} = \wt z_{g(\beta)g(\alpha)}$, nor is there $\wt\delta \in (\wt\lambda_i) \cup \{\lambda_\infty^{\wt T_s}\}$ with both $\wt x_{g(\alpha)\wt\delta} = \wt x_{g(\alpha)g(\beta)}$ and $\wt x_{g(\beta)\wt\delta} = \wt x_{g(\beta)g(\alpha)}$.
Step 1e now implies that $g(\alpha)$ and $g(\beta)$ are contiguous, and another application of Step 1d implies that $g(\beta)$ is an incoming neighbor of the $i$-th incoming neighbor of $g(\alpha)$.

A similar argument to the previous paragraph shows that for $\alpha \in V_\seam(T_b)$ and $\mu_{ij} \in \incom(\alpha)$, $g(\mu_{ij})$ lies in $\incom(g(\alpha))$.

It follows from Step 1c that for any $\alpha \in V(T_b)$, $g$ induces an order-preserving bijection from $\incom(\alpha)$ to $\incom(g(\alpha))$.

\medskip
\noindent {\sc Step 2:} {\it The general case.}
\medskip

\noindent We begin by noting that for any $\bigl(2T,(\bx_\rho),(\bz_\alpha)\bigr) \in \SWC_\bn$ and $\beta \in V_\comp(T_b)$, we can associate a smooth stable witch curve.
This association depends on whether $\beta$ lies in $V_\comp^1(T_b)$ or $V_\comp^{\geq 2}(T_b)$.
If $\beta$ lies in $V_\comp^1(T_b)$, we associate $\bigl((x_{\beta,1}),\bz_\beta\bigr)$.
Otherwise, we associate $\bigl(\bx_{\pi(\beta)},\bz_\beta\bigr)$.

\medskip
\noindent {\sc Step 2a:} {\it If $\bigl(2T',(\bx_\rho^\nu),(\bz_\alpha^\nu)\bigr)$ Gromov-converges to $\bigl(2T,(\bx_\rho),(\bz_\alpha)\bigr)$ and $(\bx^\nu,\bz^\nu)$ is the sequence of smooth stable witch curves associated as in the previous paragraph to a vertex $\beta \in V_\comp(T_b')$, then $(\bx^\nu,\bz^\nu)$ converges to a restriction of $\bigl(2T,(\bx_\rho),(\bz_\alpha)\bigr)$.}
\medskip

\noindent The only nontrivial part of this step is to spell out which restriction of $2T$ to use.
Denote by $f\colon 2T \to 2T'$ the
stable
tree-pair surjection involved in the Gromov convergence of $\bigl(2T',(\bx_\rho^\nu),(\bz_\alpha^\nu)\bigr)$ to $\bigl(2T,(\bx_\rho),(\bz_\alpha)\bigr)$.
First, suppose $\beta$ lies in $V_\comp^{\geq 2}(T_b')$.
Define a
stable
tree-pair $2T|_\beta$ like so: $T_s|_\beta$ is the preimage under $f_s$ of $\pi(\beta)$ and its incoming neighbors.
$T_b|_\beta$ is the preimage under $f_b$ of $\beta$, its incoming neighbors, and the incoming neighbors of its incoming neighbors.
Then $2T|_\beta$ is a
stable
tree-pair, and it is straightforward to show that the smooth stable witch curves $(\bx^\nu,\bz^\nu)$ associated to $\beta$ Gromov-converge to the restriction of $\bigl(2T,(\bx_\rho),(\bz_\alpha)\bigr)$ to $2T|_\beta$.
The same result can be proven in the case that $\beta$ lies in $V_\comp^1(T_b)$: in this case, set $T_s|_\beta$ to be a single vertex.

\medskip
\noindent {\sc Step 2b:} {\it We establish the general case.}
\medskip

\noindent We are now ready to prove Thm.~\ref{thm:2Mn_unique_limits}.
Since there are only finitely many isomorphism classes of
stable
tree-pairs of type $\bn$, we may pass to a subsequence and assume that $2T^\nu \equiv 2T'$ and that all the
stable
tree-pair surjections $2T \to 2T^\nu$ and $\wt{2T} \to 2T^\nu$ coincide with maps $f\colon 2T \to 2T'$ and $\wt f\colon \wt{2T} \to 2T'$.
Since $\bigl(2T',(\bx_\rho^\nu),(\bz_\alpha^\nu)\bigr)$ Gromov-converges to $\bigl(2T,(\bx_\rho),(\bz_\alpha)\bigr)$, the smooth stable witch curves associated to $\beta$ Gromov-converge to the restriction of $\bigl(2T,(\bx_\rho),(\bz_\alpha)\bigr)$ to $2T|_\beta$, as in Step 2a.
Similarly, $(\bx_{\pi(\beta)}^\nu,\bz_\beta^\nu)$ Gromov-converges to the restriction of $\bigl(\wt{2T},(\wt\bx_{\wt\rho}),(\wt\bz_{\wt\alpha})\bigr)$ to $\wt{2T}_\beta$.
By Step 1, these two restrictions are isomorphic.
Since this holds for every $\beta \in V_\comp(T_b')$, $\bigl(2T,(\bx_\rho),(\bz_\alpha)\bigr)$ and $\bigl(\wt{2T},(\wt\bx_{\wt\rho}),(\wt\bz_{\wt\alpha})\bigr)$ are isomorphic.
\end{proof}

\subsection{The definition and properties of the topology on \texorpdfstring{$\ol{2\cM}_\bn$}{2Mn}}
\label{ss:topology}

Recall that if $X$ is a set and $\cC \subset X \times X^\bN$ is an arbitrary collection of sequences and ``limits'', we can define a topology $\cU(\cC) \subset 2^X$ in which the open sets are those subsets $U \subset X$ having the property that for every $(x_0, (x_n)) \in \cC$ with $x_0 \in U$, $x_n$ is eventually in $U$.
The following lemma gives sufficient conditions for the convergent sequences in $\cU(\cC)$ to coincide with $\cC$.

\begin{lemma}[Lemma 5.6.5, \cite{ms:jh}]
\label{lem:2Mn_mu_top}
Let $X$ be a set and $\cC \subset X \times X^\bN$ be a collection of sequences in $X$ that satisfies the property that if $(x_0, (x_n)_n) \in \cC$ and $(y_0,(x_n)_n) \in \cC$, then $x_0 = y_0$.
Suppose that for every $x \in X$ there exists a constant $\eps_0(x) >0$ and a collection of functions $X \to [0,\infty]\colon x' \mapsto \mu_\eps(x,x')$ for $0 < \eps <\eps_0(x)$ satisfying the following conditions.
\begin{itemize}
\item[(a)] If $x \in X$ and $0 < \eps < \eps_0(x)$, then $\mu_\eps(x,x) = 0$.

\item[(b)] If $x \in X$, $0<\eps<\eps_0(x)$, and $(x_n)_n \in X^\bN$, then
\begin{align}
(x,(x_n)_n) \in \cC \iff \lim_{n\to\infty} \mu_\eps(x,x_n) = 0.	
\end{align}

\item[(c)] If $x \in X$, $0<\eps<\eps_0(x)$, and $(x',(x_n)_n) \in \cC$, then
\begin{align}
\mu_\eps(x,x') < \eps \implies \limsup_{n\to\infty} \mu_\eps(x,x_n) \leq \mu_\eps(x,x').	
\end{align}
\end{itemize}
Then $\cC = \cC(\cU(\cC))$.
Moreover, the topology $\cU(\cC)$ is first countable and Hausdorff.
\end{lemma}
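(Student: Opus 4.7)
The plan is to establish both inclusions $\cC \subset \cC(\cU(\cC))$ and $\cC(\cU(\cC)) \subset \cC$, and then deduce first countability and Hausdorffness. The inclusion $\cC \subset \cC(\cU(\cC))$ is immediate from the definition of $\cU(\cC)$: if $(x,(x_n)) \in \cC$ and $U \in \cU(\cC)$ contains $x$, then $x_n$ is eventually in $U$ by definition of $\cU(\cC)$-openness.

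The crux of the argument is to construct a useful family of open sets. For each $x \in X$ and each $0 < \eta < \eps < \eps_0(x)$, I would introduce the ball
\[
U_{\eps,\eta}(x) \coloneqq \{x' \in X : \mu_\eps(x,x') < \eta\}
\]
and verify $U_{\eps,\eta}(x) \in \cU(\cC)$. Given $x_0 \in U_{\eps,\eta}(x)$ and $(x_0,(x_n)) \in \cC$, hypothesis (c) applies because $\mu_\eps(x,x_0) < \eta < \eps$, and yields $\limsup_n \mu_\eps(x,x_n) \leq \mu_\eps(x,x_0) < \eta$, so $x_n$ eventually lies in $U_{\eps,\eta}(x)$; hypothesis (a) shows $x \in U_{\eps,\eta}(x)$.

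With these balls in hand, both the reverse inclusion and first countability fall out. If $x_n \to x$ in $\cU(\cC)$, then for any fixed $\eps \in (0,\eps_0(x))$ and any $\eta \in (0,\eps)$, the open neighborhood $U_{\eps,\eta}(x)$ eventually contains $x_n$, so $\limsup_n \mu_\eps(x,x_n) \leq \eta$; letting $\eta \downarrow 0$ gives $\lim_n \mu_\eps(x,x_n) = 0$, and (b) yields $(x,(x_n)) \in \cC$. For first countability at $x$, I would show that for any fixed $\eps \in (0,\eps_0(x))$, the countable family $\{U_{\eps,1/k}(x) : k \in \bN,\: 1/k < \eps\}$ is a neighborhood base at $x$: if some open $V \ni x$ contained none of these balls, picking $x_k \in U_{\eps,1/k}(x) \setminus V$ would produce a sequence with $\mu_\eps(x,x_k) \to 0$, hence $(x,(x_k)) \in \cC$ by (b), contradicting that $x_k$ should eventually lie in the open set $V$.

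Hausdorffness then follows from first countability together with the unique-limits hypothesis, transferred from $\cC$ to $\cU(\cC)$ via the already-established equality $\cC = \cC(\cU(\cC))$: if $x \neq y$ admitted no disjoint open neighborhoods, then shrinking countable bases at each point and diagonally picking points from pairwise intersections would produce a single sequence converging to both $x$ and $y$, a contradiction. The main conceptual step is the definition of the balls $U_{\eps,\eta}(x)$, where the strict inequality $\eta < \eps$ is precisely what allows hypothesis (c) to engage and make the balls open; once these balls are available, the rest of the argument assembles mechanically.
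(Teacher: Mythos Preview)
Your proof is correct. Note, however, that the paper does not supply its own proof of this lemma: it is stated with attribution to Lemma~5.6.5 of \cite{ms:jh} and used as a black box. So there is no in-paper argument to compare against; your write-up stands on its own and matches the standard approach one finds in McDuff--Salamon.

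A couple of minor remarks on presentation. In the first-countability argument, you may want to make explicit that the family $\{U_{\eps,1/k}(x)\}$ can be taken to be decreasing (it automatically is, since $\mu_\eps(x,x') < 1/(k+1)$ implies $\mu_\eps(x,x') < 1/k$), which is what makes the Hausdorff argument clean. Also, in the Hausdorff step, it is worth spelling out that once you have a sequence $(z_k)$ converging in the topology $\cU(\cC)$ to both $x$ and $y$, the equality $\cC = \cC(\cU(\cC))$ gives $(x,(z_k)) \in \cC$ and $(y,(z_k)) \in \cC$, which directly contradicts the uniqueness-of-limits hypothesis on $\cC$. You say this, but compressing ``transferred from $\cC$ to $\cU(\cC)$'' into one clause risks obscuring that the transfer goes the other way at the point of contradiction.
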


We will construct a topology on $\ol{2\cM}_\bn$ by using this lemma.
To begin, we define the functions
$\mu_\eps(x,-)\colon \ol{2\cM}_\bn \to [0,\infty]$.

\begin{definition}
For any two stable witch curves $\bigl(2T, (\bx_\rho), (\bz_\alpha)\bigr)$, $\bigl(\wt{2T}, (\wt\bx_{\wt\rho}), (\wt\bz_{\wt\alpha})\bigr)$ of type $\bn$ and for any $\eps > 0$, define a nonnegative real number $\mu_\eps\bigl(\bigl(2T,(\bx_\rho),(\bz_\alpha)\bigr),\bigl(\wt{2T}, (\wt\bx_{\wt\rho}), (\wt\bz_{\wt\alpha})\bigr)\bigr)$ like so:
\begin{align*}
&\mu_\eps\bigl(\bigl(2T,(\bx_\rho),(\bz_\alpha)\bigr),\bigl(\wt{2T}, (\wt\bx_{\wt\rho}), (\wt\bz_{\wt\alpha})\bigr)\bigr)
 \\
&\hspace{1.5in} \coloneqq
\min_{2f\colon 2T \to \wt{2T}} \inf_{
{(\phi_\rho)_{\rho\in V_\inte(T_s)}}
\atop
{(\psi_\alpha)_{\alpha \in V_\comp(T_b)}}}\!\!\! \mu_\eps\bigl(\bigl(2T, (\bx_\rho),(\bx_\alpha)\bigr),\bigl(\wt{2T}, (\wt\bx_{\wt\rho}),(\wt\bz_{\wt\alpha})\bigr); 2f, (\phi_\rho),(\psi_\alpha)\bigr),
\\
&\mu_\eps\bigl(\bigl(2T, (\bx_\rho), (\bz_\alpha)\bigr),\bigl(\wt{2T}, (\wt\bx_{\wt\rho}), (\wt\bz_{\wt\alpha})\bigr); 2f, (\phi_\rho), (\psi_\alpha)\bigr) \nonumber \\
&\hspace{0.25in}\coloneqq
\sum_{{\rho,\sigma \in V_\inte(T_s), \rho\neq\sigma,} \atop
{f_s(\rho) = f_s(\sigma)}} \sup_{(\bR\cup\{\infty\}) \setminus B_\eps(x_{\rho\sigma})} \!\!\! d\bigl(\phi_\sigma^{-1}\circ\phi_\rho, x_{\sigma\rho}\bigr)
+\!\!\!
\sum_{{\alpha,\beta \in V_\comp(T_b), \alpha\neq\beta,} \atop
{f_b(\alpha) = f_b(\beta)}} \sup_{(\bR^2\cup\{\infty\}) \setminus B_\eps(z_{\alpha\beta})} \!\!\!\!\! d\bigl(\psi_\beta^{-1}\circ\psi_\alpha, z_{\beta\alpha}\bigr) \nonumber \\
&\hspace{0.75in}
+ \sum_{{\rho \in V_\inte(T_s),\sigma \in V(T_s),} \atop
{f_s(\rho) \neq f_s(\sigma)}} d\bigl(\phi_\rho^{-1}(\wt x_{f_s(\rho)f_s(\sigma)}), x_{\rho\sigma}\bigr)
+ \sum_{{{{\alpha \in V_\comp(T_b),}
\atop
{\beta \in V_\comp(T_b)\cup (\mu_{ij})_{i,j},}}} \atop
{f_b(\alpha) \neq f_b(\beta)}} d\bigl(\psi_\alpha^{-1}(\wt z_{f_b(\alpha)f_b(\beta)}), z_{\alpha\beta}\bigr),
\nonumber
\end{align*}
where in the first line we take the minimum over all
stable
tree-pair surjections $2f\colon 2T \to \wt{2T}$ and the infimum over all tuples $(\phi_\rho) \subset G_1$ and $(\psi_\alpha) \subset G_2$ satisfying $p(\psi_\alpha) = \phi_{\pi(\alpha)}$ for every $\alpha \in V_\comp^{\geq 2}(T_b)$, and where in the second line we use the distance metrics on $\bR\cup\{\infty\}$ and $\bR^2\cup\{\infty\}$ induced by identifying these spaces with round spheres.
By convention, if there is no
stable tree-pair surjection $2T \to \wt{2T}$, we set $\mu_\eps\bigl(\bigl(2T,(\bx_\rho),(\bz_\alpha)\bigr),\bigl(\wt{2T},(\wt \bx_{\wt\rho}),(\wt \bz_{\wt\alpha})\bigr)\bigr) \coloneqq \infty$.
\null\hfill$\triangle$
\end{definition}

\begin{remark}
\label{rmk:mu_descends}
It is an immediate consequence of the definition that for any $\bigl(2T,(\bx_\rho),(\bz_\alpha)\bigr)$, \\$\mu_\eps\bigl(\bigl(2T,(\bx_\rho),(\bz_\alpha)\bigr),-\bigr)$ descends to $\ol{2\cM}_\bn$.
Our aim is to use the $\mu_\eps$'s to define a topology on $\ol{2\cM}_\bn$ via Lemma~\ref{lem:2Mn_mu_top}, so we now extend the definition of $\mu_\eps$ to a collection of functions
\begin{align}
\mu_\eps\bigl(\bigl[2T,(\bx_\rho),(\bz_\alpha)\bigr],-\bigr)\colon \ol{2\cM}_\bn \to [0,\infty]
\end{align}
like so: for any $\bigl[2T,(\bx_\rho),(\bz_\alpha)\bigr] \in \ol{2\cM}_\bn$,
choose a representative $\bigl(2T,(\bx_\rho),(\bz_\alpha)\bigr)$.
Now define
\begin{align}
\mu_\eps\bigl(\bigl[2T,(\bx_\rho),(\bz_\alpha)\bigr],-\bigr)
\coloneqq
\mu_\eps\bigl(\bigl(2T,(\bx_\rho),(\bz_\alpha)\bigr),-\bigr).
\end{align}
We note that Lemma~\ref{lem:2Mn_mu_top} does not require $\mu_\eps$ to have any continuity in its first argument.
\end{remark}

\begin{remark}
The quantity $\mu_\eps$ should be compared with a similar quantity, $\rho_\eps$, which plays the analogous role in the definition of the
Grothendieck--Knudsen
topology on $\ol\cM_r$.
(Compare also the analogous quantity used in \S5, \cite{ms:jh} to define the topology on the space of stable maps.)
For any two stable disk trees $\bigl(T,(\bx_\rho)\bigr), \bigl(\wt T,(\wt\bx_{\wt\rho})\bigr)$ with $r$ leaves and for $\eps > 0$, $\rho_\eps\bigl(\bigl(T,(\bx_\rho)\bigr), \bigl(\wt T,(\wt\bx_{\wt\rho})\bigr)\bigr)$ is defined like so:
\begin{align}
\rho_\eps\bigl(\bigl(T,(\bx_\rho)\bigr), \bigl(\wt T,(\wt\bx_{\wt\rho})\bigr)\bigr)
&\coloneqq
\min_{f\colon T \to \wt T} \inf_{(\phi_\rho)_{\rho\in V_\inte(T)}} \rho_\eps\bigl(\bigl(T,(\bx_\rho)\bigr), \bigl(\wt T,(\wt\bx_{\wt\rho})\bigr); f, (\phi_\rho)\bigr), \\
\rho_\eps\bigl(\bigl(T,(\bx_\rho)\bigr), \bigl(\wt T,(\wt\bx_{\wt\rho})\bigr); f, (\phi_\rho)\bigr) &\coloneqq
\sum_{
{\rho,\sigma \in V_\inte(T_s),\rho\neq\sigma,}
\atop
{f(\rho)= f(\sigma)}}
\sup_{(\bR\cup\{\infty\})\setminus B_\eps(x_{\rho\sigma})}
d\bigl(\phi_\sigma^{-1}\circ\phi_\rho,x_{\sigma\rho}\bigr)
 \nonumber \\
&\hspace{1.5in}
+
\sum_{
{\rho \in V_\inte(T),\sigma \in V(T),}
\atop
{f(\rho)\neq f(\sigma)}} d\bigl(\phi_\rho^{-1}(\wt x_{f(\rho)f(\sigma)}),x_{\rho\sigma}\bigr),
\nonumber
\end{align}
where in the first line we take the minimum over all surjections $f\colon T \to \wt T$ between stable RRTs.
\end{remark}

\begin{lemma}
\label{lem:2Mn_mu_props}
Fix $\bigl(2T,(\bx_\rho),(\bz_\alpha)\bigr) \in \SWC_\bn$.
Then the following hold for every $\eps > 0$:
\begin{itemize}
\item[] {\sc (convergence)} A sequence $\bigl(2T^\nu,(\bx_\rho^\nu),(\bz_\alpha^\nu)\bigr) \subset \SWC_\bn$ Gromov-converges to $\bigl(2T,(\bx_\rho),(\bz_\alpha)\bigr)$ if and only if $\mu_\eps\bigl(\bigl(2T, (\bx_\rho),(\bz_\alpha)\bigr),\bigl(2T^\nu, (\bx_\rho^\nu),(\bz_\alpha^\nu)\bigr)\bigr)$ converges to 0.

\item[] {\sc (triangle)} If $\bigl(\wt{2T}, (\wt\bx_{\wt\rho}),(\wt\bz_{\wt\alpha})\bigr) \in \SWC_\bn$ satisfies $\mu_\eps\bigl(\bigl(2T, (\bx_\rho),(\bz_\alpha)\bigr),\bigl(\wt{2T}, (\wt\bx_{\wt\rho}),(\wt\bz_{\wt\alpha})\bigr)\bigr) < \eps$ and the sequence $\bigl(2T^\nu,(\bx_\rho^\nu),(\bz_\alpha^\nu)\bigr)$ Gromov-converges to $\bigl(\wt{2T},(\wt\bx_{\wt\rho}),(\wt\bz_{\wt\alpha})\bigr)$, then
\begin{align}
\limsup_{\nu\to\infty} \mu_\eps\bigl(\bigl(2T, (\bx_\rho),(\bz_\alpha)\bigr),\bigl(2T^\nu, (\bx_\rho^\nu),(\bz_\alpha^\nu)\bigr)\bigr)	 \leq \mu_\eps\bigl(\bigl(2T, (\bx_\rho),(\bz_\alpha)\bigr),\bigl(\wt{2T}, (\wt\bx_{\wt\rho}),(\wt\bz_{\wt\alpha})\bigr)\bigr).
\end{align}
\end{itemize}	
\end{lemma}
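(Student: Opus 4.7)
The plan is to prove \textsc{(convergence)} (both directions) and then \textsc{(triangle)}, in each case by exhibiting suitable witnesses for the infimum defining $\mu_\eps$ and analyzing each of the four sums term-by-term. Throughout, I abbreviate $X = \bigl(2T,(\bx_\rho),(\bz_\alpha)\bigr)$, $X^\nu = \bigl(2T^\nu,(\bx_\rho^\nu),(\bz_\alpha^\nu)\bigr)$, and $\wt X = \bigl(\wt{2T},(\wt\bx_{\wt\rho}),(\wt\bz_{\wt\alpha})\bigr)$.

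For the forward direction of \textsc{(convergence)}, I would take the Gromov-convergence witnesses $2f^\nu, (\phi_\rho^\nu), (\psi_\alpha^\nu)$ (which exist for $\nu$ large) and use them as test data in $\mu_\eps(X, X^\nu;\cdot)$. Since $\mu_\eps(X, X^\nu)$ is an infimum, it is bounded above by this test value, so it suffices to show each of the four sums tends to $0$. The diagonal sup-terms vanish by \textsc{(Rescaling$'$)} in Lemma~\ref{lem:2M_gromov_conseq}, since $(\bR\cup\{\infty\})\setminus B_\eps(x_{\rho\sigma})$ and $(\bR^2\cup\{\infty\})\setminus B_\eps(z_{\alpha\beta})$ are compact subsets missing the relevant bad points; the off-diagonal distance-terms vanish by \textsc{(Special point$'$)} and Lemma~\ref{lem:V1_x-limits}.

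For the reverse direction, assume $\mu_\eps(X, X^\nu) \to 0$ and choose witnesses $2f^\nu, (\phi_\rho^\nu), (\psi_\alpha^\nu)$ attaining the infimum up to error $1/\nu$. Since there are finitely many isomorphism classes of stable tree-pairs of type $\bn$ and finitely many surjections between any two, pass to a subsequence on which $2T^\nu \equiv 2T'$ and $2f^\nu \equiv 2f$. The \textsc{(restriction)} axiom is built into the infimum defining $\mu_\eps$; \textsc{(special point)} is immediate from the vanishing of the off-diagonal distance-terms; and the diagonal sup-terms give uniform convergence of $(\psi_\beta^\nu)^{-1}\circ\psi_\alpha^\nu$ to $z_{\beta\alpha}$ on $(\bR^2\cup\{\infty\})\setminus B_\eps(z_{\alpha\beta})$. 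To promote this to u.c.s.\ convergence on $(\bR^2\cup\{\infty\})\setminus\{z_{\alpha\beta}\}$, I use that elements of $G_2$ act conformally and fix $\infty$: uniform convergence to a constant on a region with nonempty interior forces the affine parameters of these composites to degenerate in a prescribed way, after which u.c.s.\ convergence away from a single point follows by a direct calculation. The seam-tree case is handled analogously (yielding the Gromov-convergence of the underlying stable disk trees).

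For \textsc{(triangle)}, given witnesses $\bigl(2f,(\phi_\rho),(\psi_\alpha)\bigr)$ for $\mu_\eps(X, \wt X) < \eps$ and Gromov-convergence data $\bigl(2g^\nu,(\wt\phi_{\wt\rho}^\nu),(\wt\psi_{\wt\alpha}^\nu)\bigr)$ for $X^\nu \to \wt X$, I would compose them to form test data $2f^\nu \coloneqq 2g^\nu\circ 2f\colon 2T \to 2T^\nu$, $\phi_\rho^\nu \coloneqq \wt\phi_{f_s(\rho)}^\nu \circ \phi_\rho$, $\psi_\alpha^\nu \coloneqq \wt\psi_{f_b(\alpha)}^\nu \circ \psi_\alpha$ for $\mu_\eps(X, X^\nu;\cdot)$; these satisfy \textsc{(restriction)} by a short computation combining $p(\psi_\alpha) = \phi_{\pi(\alpha)}$ and the analogous equality on the tilde side. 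Then I bound each term of $\mu_\eps(X, X^\nu;\cdot)$ term-by-term, splitting on whether $f_s^\nu(\rho) = f_s^\nu(\sigma)$ arises from $f_s(\rho) = f_s(\sigma)$ (in which case the test composite collapses to $\phi_\sigma^{-1}\circ\phi_\rho$ and the sup equals the matching diagonal term in $\mu_\eps(X, \wt X)$) or from $g_s^\nu$ collapsing distinct vertices $f_s(\rho) \neq f_s(\sigma)$ (in which case $(\phi_\sigma^\nu)^{-1}\circ\phi_\rho^\nu$ limits u.c.s.\ away from the bad point $\phi_\rho^{-1}(\wt x_{f_s(\rho)f_s(\sigma)})$ to the constant $\phi_\sigma^{-1}(\wt x_{f_s(\sigma)f_s(\rho)})$). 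The main obstacle, and the reason for the hypothesis $\mu_\eps(X, \wt X) < \eps$, is precisely this second case: the hypothesis forces the bad point $\phi_\rho^{-1}(\wt x_{f_s(\rho)f_s(\sigma)})$ to lie in $B_\eps(x_{\rho\sigma})$, so the sup taken over $(\bR\cup\{\infty\})\setminus B_\eps(x_{\rho\sigma})$ avoids it and passes to the limiting constant, which is bounded by the matching off-diagonal term in $\mu_\eps(X, \wt X)$. The bubble-tree sums are controlled identically.
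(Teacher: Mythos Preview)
Your proposal is correct and follows essentially the same approach as the paper's proof: use the Gromov-convergence witnesses (or near-optimal $\mu_\eps$ witnesses) as test data, analyze the four sums term-by-term via \textsc{(Rescaling$'$)} and \textsc{(Special point$'$)}, and for \textsc{(triangle)} compose the witnesses and use the hypothesis $\mu_\eps<\eps$ to confine the bad points to the excised $\eps$-balls. A few minor notes: the paper in the reverse direction of \textsc{(convergence)} handles the seam-tree part by invoking the inequality $\rho_\eps\le\mu_\eps$ and the analogous $\SDT_r$ lemma rather than repeating the argument; your invocation of Lemma~\ref{lem:V1_x-limits} is not actually needed, since the seam-tree off-diagonal terms in $\mu_\eps$ are indexed by $V_\inte(T_s)\times V(T_s)$ and are handled directly by the disk-tree analogue of \textsc{(Special point$'$)}; and in \textsc{(triangle)} the paper in fact proves the composed test value \emph{equals} (not merely bounds) $\mu_\eps(X,\wt X;2g,(\chi_\rho),(\xi_\alpha))$ in the limit, though your bounding argument is sufficient for the stated inequality.
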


\begin{proof}
\begin{itemize}
\item[] {\sc (convergence)} If $\bigl(2T^\nu,(\bx_\rho^\nu),(\bz_\alpha^\nu)\bigr)$ Gromov-converges to $\bigl(2T,(\bx_\rho),(\bz_\alpha)\bigr)$ then it follows from {\sc (rescaling')} and {\sc (special point')}, and the analogous properties for Gromov convergence of stable disk trees, that $\mu_\eps\bigl(\bigl(2T, (\bx_\rho),(\bz_\alpha)\bigr),\bigl(T^\nu, (\bx_\rho^\nu),(\bz_\alpha^\nu)\bigr)\bigr)$ converges to 0.

Conversely, suppose $\mu_\eps\bigl(\bigl(2T, (\bx_\rho),(\bz_\alpha)\bigr),\bigl(2T^\nu, (\bx_\rho^\nu),(\bz_\alpha^\nu\bigr)\bigr)$ converges to 0.
Then it is the case that for $\nu$ large enough there is a
stable
tree-pair surjection $2f^\nu\colon 2T \to 2T^\nu$ and tuples $(\phi_\rho^\nu)$, $(\psi_\alpha^\nu)$ with $p(\psi_\alpha) = \phi_{\pi(\alpha)}$ for $\alpha \in V_\comp^{\geq2}(T_b)$ such that the following inequality holds:
\begin{align}
&\mu^\nu \coloneqq \mu_\eps\bigl(\bigl(2T,(\bx_\rho),(\bz_\alpha)\bigr),\bigl(2T^\nu,(\bx_\rho^\nu),(\bz_\alpha^\nu)\bigr); 2f^\nu, (\phi_\rho^\nu), (\psi_\alpha^\nu)\bigr) \nonumber \\
&\hspace{3in} \leq
\mu_\eps\bigl(\bigl(2T,(\bx_\rho),(\bz_\alpha)\bigr),\bigl(2T^\nu,(\bx_\rho^\nu),(\bz_\alpha^\nu)\bigr)\bigr) + 2^{-\nu}.
\nonumber	
\end{align}
Since there are only finitely many
stable
tree-pair surjections with domain $2T$, we may assume all the
stable
tree-pairs $2T^\nu$ are equal to a single $\wt{2T}$ and all the maps $2f^\nu\colon 2T \to \wt{2T}$ are equal to a single $2f$.
First, we verify {\sc (rescaling)}.
Fix contiguous $\alpha,\beta \in V_\comp(T_b)$ with $f_b(\alpha) = f_b(\beta)$; without loss of generality we may assume $\alpha$ is closer to the root than $\beta$, so $z_{\alpha\beta} \in \bR^2$ and $z_{\beta\alpha} = \infty$.
The convergence $\mu^\nu \to 0$ implies that $(\psi_\alpha^\nu)^{-1}\circ\psi_\beta^\nu$ converges to $z_{\alpha\beta}$ uniformly on $\bR^2 \setminus B_\eps(\infty)$, hence $(\psi_\alpha^\nu)^{-1} \circ \psi_\beta^\nu$ converges to $z_{\alpha\beta}$ u.c.s.\ away from $\infty$.
From this it follows that $(\psi_\beta^\nu)^{-1} \circ \psi_\alpha^\nu$ converges to $\infty$ u.c.s.\ away from $z_{\alpha\beta}$, so we have established {\sc (rescaling)}.
The {\sc (special point)} requirement obviously holds.
Finally, the inequality
\begin{align}
\rho_\eps\bigl(\bigl(T_s,(\bx_\rho)\bigr),\bigl(T_s^\nu,(\bx_\rho^\nu)\bigr);f_s^\nu,(\phi_\rho^\nu)\bigr)
\leq
\mu_\eps\bigl(\bigl(2T,(\bx_\rho),(\bz_\alpha)\bigr),\bigl(2T^\nu,(\bx_\rho^\nu),(\bz_\alpha^\nu)\bigr);2f^\nu,(\phi_\rho^\nu),(\psi_\alpha^\nu)\bigr)
\end{align}
implies that the left-hand side converges to 0, so $\bigl(T_s^\nu,(\bx_\rho^\nu)\bigr)$ Gromov-converges to $\bigl(T_s,(\bx_\rho)\bigr)$ via $f_s^\nu$ and $(\phi_\rho^\nu)$.
(This uses the analogue of the current lemma for $\SDT_r$.)
We may conclude that $\bigl(2T^\nu, (\bx_\rho^\nu), (\bz_\alpha^\nu)\bigr)$ Gromov-converges to $\bigl(2T,(\bx_\rho),(\bz_\alpha)\bigr)$.

\medskip

\item[] {\sc (triangle)} The inequality $\mu_\eps\bigl(\bigl(2T, (\bx_\rho),(\bz_\alpha)\bigr),\bigl(\wt{2T}, (\wt\bx_{\wt\rho}),(\wt\bz_{\wt\alpha})\bigr)\bigr) < \eps$ implies that there exists a
stable
tree-pair surjection $2g\colon 2T \to \wt{2T}$ and tuples $(\chi_\rho)_{\rho \in V_\inte(T_s)} \subset G_1$, $(\xi_\alpha)_{\alpha\in V_\comp(T_b)} \subset G_2$ with
$p(\xi_\alpha) = \chi_{\pi(\alpha)}$
for $\alpha \in V_\comp^{\geq 2}(T_b)$ such that
\begin{align} \label{eq:metric_bound}
\mu_\eps\bigl(\bigl(2T, (\bx_\rho),(\bz_\alpha)\bigr),\bigl(\wt{2T}, (\wt\bx_{\wt\rho}),(\wt\bz_{\wt\alpha})\bigr), 2g, (\chi_\rho),(\xi_\alpha)\bigr) < \eps.
\end{align}
It follows that for every pair $\alpha, \beta \in V_\comp(T_b)$ with $g_b(\alpha) \neq g_b(\beta)$ we have
\begin{align} \label{eq:nodal_bound1}
d\bigl(\xi_\alpha^{-1}(\wt z_{g_b(\alpha)g_b(\beta)}), z_{\alpha\beta}\bigr) < \eps;
\end{align}
similarly, for every $\rho, \sigma \in V_\inte(T_s)$ with $g_s(\alpha) \neq g_s(\beta)$ we have
\begin{align} \label{eq:nodal_bound2}
d\bigl(\chi_\rho^{-1}(\wt x_{g_s(\rho)g_s(\sigma)}),x_{\rho\sigma}\bigr) < \eps.
\end{align}
Now suppose that $\bigl(2T^\nu, (\bx_\rho^\nu),(\bz_\alpha^\nu)\bigr)$ Gromov-converges to $\bigl(\wt{2T}, (\wt\bx_{\wt\rho}),(\wt\bz_{\wt\alpha})\bigr)$ via
stable
tree-pair surjections $2f^\nu\colon \wt{2T} \to 2T^\nu$ and reparametrizations $(\phi_{\wt\rho}^\nu)$ and $(\psi_{\wt\alpha}^\nu)$.
To prove {\sc (triangle)}, it suffices to prove the following equality:
\begin{align} \label{eq:triangle_equiv}
&\mu_\eps\bigl(\bigl(2T,(\bx_\rho),(\bz_\alpha)\bigr),\bigl(\wt{2T},(\wt\bx_{\wt\rho}),(\wt\bz_{\wt\alpha})\bigr); 2g, (\chi_\rho), (\xi_\alpha)\bigr) \\
&\hspace{0.5in} = \lim_{\nu\to\infty} \mu_\eps\bigl(\bigl(2T,(\bx_\rho),(\bz_\alpha)\bigr),\bigl(2T^\nu,(\bx_\rho^\nu),(\bz_\alpha^\nu)\bigr); 2f^\nu\circ 2g, (\phi_{g_s(\rho)}^\nu\circ\chi_\rho),(\psi_{g_b(\alpha)}^\nu\circ\xi_\alpha)\bigr). \nonumber
\end{align}
Since there are only finitely many
stable
tree-pair surjections with domain $\wt{2T}$, we may assume $2T^\nu \equiv 2T'$ and $2f^\nu \equiv 2f \colon \wt{2T} \to 2T'$.
For any distinct $\alpha,\beta \in V_\comp(T_b)$ with $g_b(\alpha) = g_b(\beta)$ we have $\xi_\beta^{-1} \circ \xi_\alpha = (\psi_{g_b(\beta)}^\nu\circ\xi_\beta)^{-1} \circ (\psi_{g_b(\alpha)}^\nu\circ\xi_\alpha)$, hence
\begin{align} \label{eq:triangle_helper1}
\sup_{(\bR^2\cup\{\infty\})\setminus B_\eps(z_{\alpha\beta})} d\bigl(\xi_\beta^{-1}\circ\xi_\alpha,z_{\beta\alpha}\bigr) = \sup_{(\bR^2\cup\{\infty\})\setminus B_\eps(z_{\alpha\beta})} d\bigl((\psi_{g_b(\beta)}^\nu\circ\xi_\beta)^{-1} \circ (\psi_{g_b(\alpha)}^\nu\circ\xi_\alpha),z_{\beta\alpha}\bigr).
\end{align}
Similarly, for distinct $\rho,\tau \in V_\inte(T_s)$ with $g_s(\rho)=g_s(\tau)$, we have
\begin{align} \label{eq:triangle_helper2}
\sup_{(\bR\cup\{\infty\})\setminus B_\eps(x_{\rho\sigma})} d\bigl(\chi_\sigma^{-1}\circ\chi_\rho,x_{\sigma\rho}\bigr) = \sup_{(\bR\cup\{\infty\})\setminus B_\eps(x_{\rho\sigma})} d\bigl((\phi_{g_s(\sigma)}^\nu\circ\chi_\sigma)^{-1} \circ (\phi_{g_s(\rho)}^\nu\circ\chi_\rho),x_{\sigma\rho}\bigr).
\end{align}
If $\alpha, \beta \in V_\comp(T_b)$ have $g_b(\alpha) \neq g_b(\beta)$ and $f_b^\nu(g_b(\alpha)) = f_b^\nu(g_b(\beta))$, then {\sc (rescaling')} implies that $(\psi_{g_b(\beta)}^\nu)^{-1} \circ \psi_{g_b(\alpha)}^\nu$ converges to $\wt z_{g_b(\beta)g_b(\alpha)}$ u.c.s.\ away from
$\wt z_{g_b(\alpha)g_b(\beta)}$,
hence by \eqref{eq:nodal_bound1} $(\psi_{g_b(\beta)}^\nu\circ\xi_\beta)^{-1} \circ (\psi_{g_b(\alpha)}^\nu\circ\xi_\alpha)$ converges to $\xi_\beta^{-1}(\wt z_{g_b(\beta)g_b(\alpha)})$ uniformly on $(\bR^2\cup\{\infty\}) \setminus B_\eps(z_{\alpha\beta})$.
We therefore have
\begin{align} \label{eq:triangle_helper3}
d\bigl(\xi_\beta^{-1}(\wt z_{g_b(\beta)g_b(\alpha)}), z_{\beta\alpha}\bigr) = \lim_{\nu\to\infty} \sup_{(\bR^2\cup\{\infty\})\setminus B_\eps(z_{\alpha\beta})} d\bigl((\psi_{g_b(\beta)}^\nu\circ\xi_\beta)^{-1} \circ (\psi_{g_b(\alpha)}^\nu\circ\xi_\alpha),z_{\beta\alpha}\bigr).
\end{align}
Similarly, it follows from \eqref{eq:nodal_bound2} that if $\rho,\sigma \in V_\inte(T_s)$ have $g_s(\alpha) \neq g_s(\sigma)$ and $f_s^\nu(g_s(\rho)) = f_s^\nu(g_s(\sigma))$, we have
\begin{align} \label{eq:triangle_helper4}
d\bigl(\chi_\sigma^{-1}(\wt z_{g_s(\sigma)g_s(\rho)}), x_{\sigma\rho}\bigr) = \lim_{\nu\to\infty} \sup_{(\bR\cup\{\infty\})\setminus B_\eps(x_{\rho\sigma})} d\bigl((\phi_{g_s(\sigma)}^\nu \circ \chi_\sigma)^{-1} \circ (\phi_{g_s(\rho)}^\nu \circ \chi_\rho),x_{\sigma\rho}\bigr).
\end{align}
Finally, if
$\alpha \in V_\comp(T_b) \cup V_\mk(T_b)$, $\beta \in V_\comp(T_b)$
have $f_b^\nu(g_b(\alpha)) \neq f_b^\nu(g_b(\beta))$, then {\sc (special point')} implies the convergence of $(\wt\psi_{g_b(\beta)}^\nu)^{-1}(z_{f_b^\nu(g_b(\beta))f_b^\nu(g_b(\alpha))}^\nu)$ to $\wt z_{g_b(\beta)g_b(\alpha)}$, hence
\begin{align} \label{eq:triangle_helper5}
d\bigl(\xi_\beta^{-1}(\wt z_{g_b(\beta)g_b(\alpha)}),z_{\beta\alpha}\bigr) = \lim_{\nu\to\infty} d\bigl((\psi_{g_b(\beta)}^\nu\circ\xi_\beta)^{-1}(z_{f_b^\nu(g_b(\beta))f_b^\nu(g_b(\alpha))}^\nu), z_{\beta\alpha}\bigr).
\end{align}
Similarly, if
$\rho \in V(T_s)$, $\sigma \in V_\inte(T_s)$
have $f_s^\nu(g_s(\rho)) \neq f_s^\nu(g_s(\sigma))$, then we have
\begin{align} \label{eq:triangle_helper6}
d\bigl(\chi_\sigma^{-1}(\wt x_{g_s(\sigma)g_s(\rho)}),x_{\sigma\rho}\bigr) = \lim_{\nu\to\infty} d\bigl((\phi_{g_s(\sigma)}^\nu\circ\chi_\sigma)^{-1}(x_{f_s^\nu(g_s(\sigma))f_s^\nu(g_s(\rho))}^\nu), z_{\sigma\rho}\bigr).
\end{align}
\eqref{eq:triangle_helper1}, \eqref{eq:triangle_helper2},\eqref{eq:triangle_helper3},
    \eqref{eq:triangle_helper4},
\eqref{eq:triangle_helper5}, and \eqref{eq:triangle_helper6} together yield \eqref{eq:triangle_equiv},
by showing that each term on the left-hand side of \eqref{eq:triangle_equiv} is equal to a corresponding term on the right-hand side.
\end{itemize}
\end{proof}

We now define the
{\bf{Grothendieck--Knudsen topology}}
on $\ol{2\cM}_\bn$ to be $\cU(\cC)$, where $\cC$ are the Gromov-convergent sequences.
Moreover, we equip $\ol{2\cM}_\bn$ with the $W_\bn$-stratification defined by sending $\bigl(2T,(\bx_\rho),(\bz_\alpha)\bigr)$ to $2T$.
It is immediate from the definition of Gromov convergence that this map is continuous with respect to the Alexandroff topology on $W_\bn$.

\begin{proof}[Proof of Thm.~\ref{thm:main}]
It follows from Thm.~\ref{thm:2Mn_unique_limits} that Gromov-convergent sequences have unique limits.
This, together with Rmk.~\ref{rmk:mu_descends} and Lemma~\ref{lem:2Mn_mu_props}, imply that Gromov-convergent sequences satisfy the hypotheses of Lemma~\ref{lem:2Mn_mu_top}.
This proves that convergence in the
Grothendieck--Knudsen
topology on $\ol{2\cM}_\bn$ is equivalent to Gromov convergence, and that $\ol{2\cM}_\bn$ is first-countable and Hausdorff.

The rest of the proof of the topological properties of $\ol{2\cM}_\bn$ hinges on showing that $\ol{2\cM}_\bn$ is second-countable, just as the analogous result for $\ol\cM_r$ depends similarly on showing that $\ol\cM_r$ is second-countable.
The proof of this result for $\ol\cM_r$ in \S 5, \cite{ms:jh} contains a gap; McDuff--Salamon have communicated to the author a fix, which they intend to include in future editions of \cite{ms:jh}.
This fix applies equally well to the current proof.

Finally, we observe that the forgetful map $W_\bn \to K_r$ extends to a map $\ol{2\cM}_\bn \to \ol\cM_r$, sending $\bigl(2T,(\bx_\rho),(\bz_\alpha)\bigr)$ to $\bigl(T_s,(\bx_\rho)\bigr)$.
This map sends Gromov-convergent sequences to Gromov-convergent sequences, and $\ol{2\cM}_\bn$ is first-countable, so this map is continuous.
\end{proof}

\end{document}